\newcommand{\NN}{\mathbb{N}}
\newcommand{\ZZ}{\mathbb{Z}}
\newcommand{\RR}{\mathbb{R}}
\newcommand{\QQ}{\mathbb{Q}}
\newcommand{\mb}[1]{\mathbf{#1}}
\newcommand{\mc}[1]{\mathcal{#1}}
\newcommand{\ol}[1]{\overline{#1}}
\newcommand{\wh}[1]{\widehat{#1}}
\newcommand{\wt}[1]{\widetilde{#1}}
\newcommand{\G}{\mathcal{G}}
\newcommand{\M}{\mathfrak{M}}
\newcommand{\Aut}{\mathrm{Aut}}
\newcommand{\U}{\mathcal{U}}
\newcommand{\e}{\varepsilon}
\newcommand{\ceil}[1]{\left\lceil #1 \right\rceil}
\newcommand{\floor}[1]{\left\lfloor #1 \right\rfloor}
\newtheorem{theo}{Theorem}[section]
\newtheorem{lem}[theo]{Lemma}
\newtheorem{prop}[theo]{Proposition}
\newtheorem{cor}[theo]{Corollary}
\theoremstyle{definition}
\newtheorem{defn}[theo]{Definition}
\begin{document}
\renewcommand{\thepage}{\roman{page}}

\thispagestyle{empty}

\begin{center}
\Large\upshape{Weak Convergence of Laws of Finite Graphs}
\end{center}

\vfill

\begin{center}
\large\rm Igor Artemenko
\end{center}

\vfill

\begin{center}
Project submitted in partial fulfilment of the requirements for the degree of B.Sc. Honours Specialisation in Mathematics
\end{center}

\vfill

\begin{center}
Department of Mathematics and Statistics\\
Faculty of Science\\
University of Ottawa
\end{center}

\vspace*{\fill}

\begin{center}
\includegraphics[width=1in]{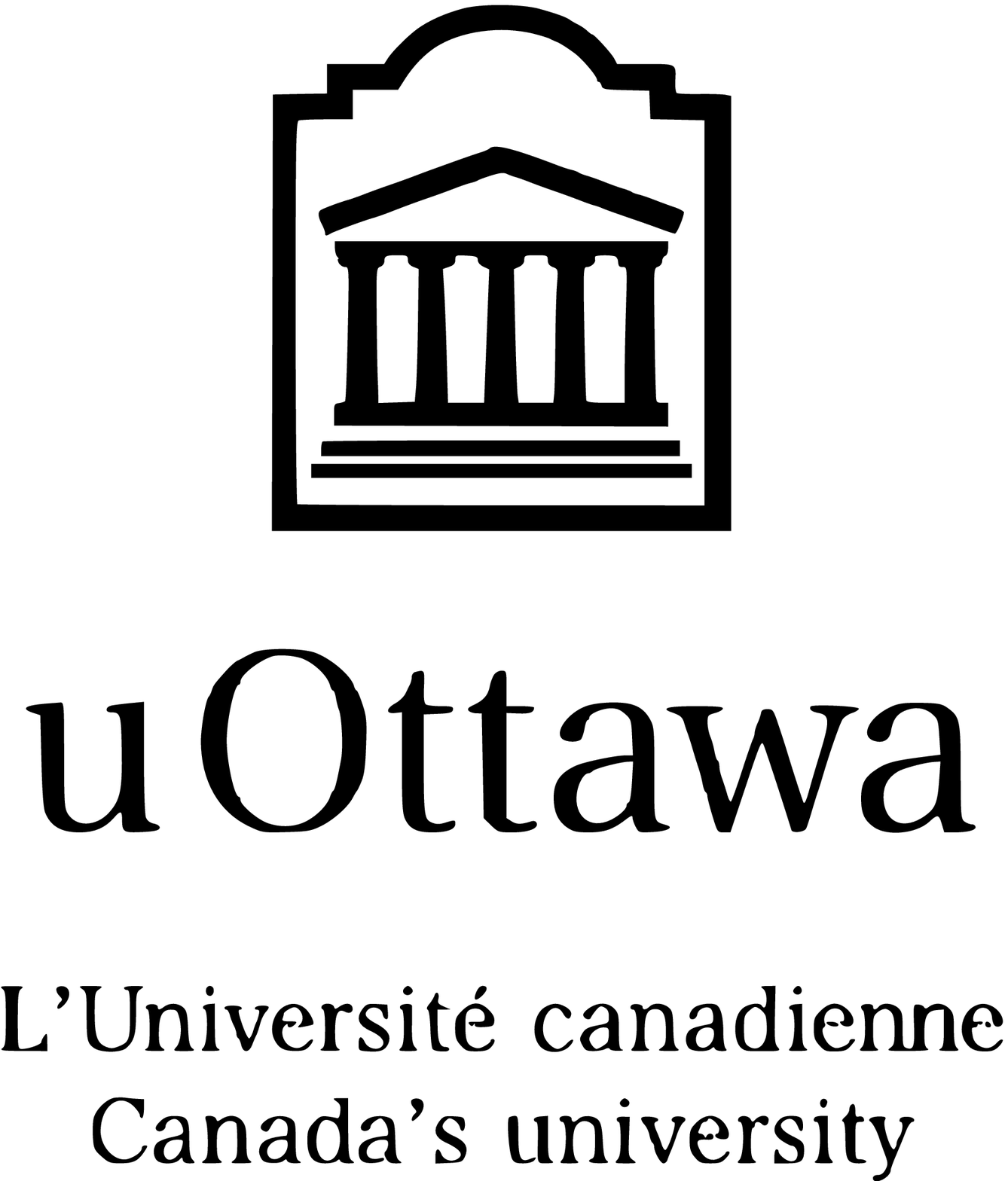}
\end{center}

\vfill

\begin{center}
\copyright\ Igor Artemenko, Ottawa, Canada, 2010
\end{center}
\clearpage

\thispagestyle{empty}
\tableofcontents

\thispagestyle{empty}
\addcontentsline{toc}{section}{List of Figures}
\listoffigures
\clearpage

\setcounter{page}{1}
\renewcommand{\thepage}{\arabic{page}}

\section{Introduction}
In recent years, several mathematicians have proceeded to study a concept known as unimodularity. This report is an introductory survey of this theory. The reader will encounter known results, new observations, and detailed examples. Our presentation is independent of any particular source, and we include rigorous proofs of facts whose demonstrations have been omitted elsewhere.

The results presented may very well lead to something nontrivial. In particular, with every proposition discovered and proved, we learn more about unimodularity, and so come closer to proving the current open problems and finding new ones.

Throughout this report, we will assume that the reader has a basic understanding of metric spaces and graph theory.

\section{Preliminaries}
Unless mentioned otherwise, the letters $k$, $m$, $n$, and their uppercase variants denote positive integers. Furthermore, $\ol{\NN} = \NN \cup \{\infty\}$ and $\NN^\ast = \NN \setminus \{0\}$.

All of the graphs in this paper are simple. That is, they have no loops, and there is at most one edge between any two vertices. A graph is \emph{finite} if its vertex set is finite. On the other hand, it is \emph{infinite} if it is not finite. If the vertex set of a graph is nonempty, the graph itself is said to be \emph{nonempty}. An edge in a graph is an unordered pair of vertices $\{u,v\}$, which we will denote by $uv$ instead.

In this report, a \emph{measure} on a space $X$ is a countably additive function from a $\sigma$-algebra of subsets of $X$ to the interval $[0,\infty]$. If $\mu$ is a measure on $X$ and $\mu(X) = 1$, then $\mu$ is a \emph{probability measure}. If a probability measure is defined on a subset of its domain, we extend it to a measure over the entire domain by setting it equal to zero elsewhere. The vertex set of every graph is equipped with the \emph{shortest path metric} $d$. That is, for all vertices $x$ and $y$ in a graph $G$, $d(x,y)$ is the length of the shortest path from $x$ to $y$; if no such path exists, $d(x,y) = \infty$.

\begin{defn}
Let $G$ be a graph; let $X$ be a subset of the vertices of $G$. A \emph{subgraph of $G$ induced by $X$} is a graph whose vertex set is $X$ and whose edge set is
\[
\{uv \in E(G) ~:~ u,v \in X\}.
\]
This subgraph is known as an \emph{induced subgraph} of $G$. It is denoted by $G[X]$.
\end{defn}

Note that the notion of an induced subgraph is stronger than that of a subgraph.

\begin{defn}
If $u$ and $v$ are vertices of a graph $G$, they are \emph{similar} if $\sigma(u) = v$ for some graph automorphism $\sigma$ on $G$. A graph is \emph{vertex-transitive} if any two of its vertices are similar.
\end{defn}

\begin{defn}
A $k$-\emph{rooted graph} is a $(k+1)$-tuple $(G,o_1,\ldots,o_k)$ where $G$ is a graph and $\{o_1,\ldots,o_k\} \subseteq V(G)$. Each $o_i$ is known as a \emph{root}; we do not assume that the roots are pairwise distinct. For convenience, we will refer to $1$-rooted graphs as rooted graphs, and $2$-rooted graphs as birooted graphs. Two $k$-rooted graphs $(G,o_1,\ldots,o_k)$ and $(G',o_1',\ldots,o_k')$ are \emph{isomorphic}, written
\[
(G,o_1,\ldots,o_k) \cong (G',o_1',\ldots,o_k'),
\]
if there exists a graph isomorphism $\varphi : G \to G'$ such that
\[
\varphi(o_i) = o_i'
\]
for each $i \in \{1,\ldots,k\}$.
\end{defn}

As the reader can verify, $\cong$ is an equivalence relation; its equivalence classes are called \emph{isomorphism classes}.

\begin{defn}
A graph is \emph{locally finite} if the degree of each of its vertices is finite.
\end{defn}

Let $\wh{\G}$ be the collection of all isomorphism classes of locally finite connected rooted graphs. The elements of $\wh{\G}$ are of the form $[G,o]$. The subcollection $\wh{\G}_M$ of $\wh{\G}$ consists of rooted graphs whose maximal degree is at most $M$. From this point on, we will simply write \emph{rooted graph} when referring to its isomorphism class.

Given a rooted graph $[G,o] \in \wh{\G}$ and a nonnegative real number $r$, let $B_G(o,r)$ be the subgraph of $G$ induced by the vertices at a distance of at most $r$ from $o$. That is,
\[
B_G(o,r) = G[B(o,r)]
\]
where $B(o,r) = \{v \in V(G) ~:~ d(o,v) \leq r\}$. The reader may think of the graphs $B_G(o,r)$ as closed balls.

If $G$ is a graph, then $G_x$ is the connected component of $G$ that contains $x$. The rooted graph $[G_x,x]$ is a \emph{rooted connected component} of $G$. Vertex-transitive graphs only have one rooted connected component, so it suffices to write $[G,\cdot]$ in this case. For clarity, we will write $(G_k,o_k)$ instead of $(G_{o_k},o_k)$. This notation will be especially useful when dealing with sequences of graphs.

When the reader encounters a figure of a rooted graph, the roots are the solid circles; all other vertices are empty circles.
\cleardoublepage

\section{The Metric $\rho$}
By itself, the set $\wh{\G}$ is not very interesting for our purposes. To benefit from it, we will equip it with a certain metric.

Given $[G,o],[G',o'] \in \wh{\G}$, we define the distance $\rho : \wh{\G} \times \wh{\G} \to \RR$ as follows:
\[
\rho([G,o],[G',o']) =
\begin{cases}
0               & \text{ if $[G,o] = [G',o']$,}\\
\frac{1}{1 + r} & \text{ otherwise.}
\end{cases}
\]
where
\[
r = \sup \{s \in \NN ~:~ [B_G(o,s),o] = [B_{G'}(o',s),o']\}.
\]

\begin{prop}
The function $\rho$ is a well-defined metric on $\wh{\G}$. Moreover, $\rho$ is an \emph{ultrametric}. That is, it satisfies a stronger version of the triangle inequality:
\[
\rho([G_1,o_1],[G_2,o_2]) \leq \max\{\rho([G_1,o_1],[G_3,o_3]),\rho([G_3,o_3],[G_2,o_2])\}
\]
for all $[G_1,o_1],[G_2,o_2],[G_3,o_3] \in \wh{\G}$.
\end{prop}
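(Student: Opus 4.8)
First I would check that $\rho$ is well-defined. There are two things to verify: that the supremum $r$ defining $\rho([G,o],[G',o'])$ does not depend on the choice of representatives of the isomorphism classes, and that $r$ is either a genuine natural number or $+\infty$ (so that $\frac{1}{1+r}$ makes sense, with the convention $\frac{1}{1+\infty}=0$). Representative-independence is immediate: if $[G,o]=[H,p]$ and $[G',o']=[H',p']$, then for each $s$ the balls $[B_G(o,s),o]$ and $[B_H(p,s),p]$ coincide (an isomorphism of the ambient rooted graphs restricts to an isomorphism of the radius-$s$ balls), and likewise on the primed side, so the set whose supremum is taken is unchanged. For the value of $r$: the set $S=\{s\in\NN : [B_G(o,s),o]=[B_{G'}(o',s),o']\}$ always contains $0$ (both radius-$0$ balls are a single rooted vertex with no edges), and it is "downward closed" — if $s\in S$ and $t\le s$, then $t\in S$, because an isomorphism $B_G(o,s)\to B_{G'}(o',s)$ maps $B(o,t)$ onto $B(o',t)$ and restricts appropriately. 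Hence $S$ is either all of $\NN$, giving $r=\infty$ and $\rho=0$, or a finite initial segment $\{0,1,\ldots,r\}$ with a well-defined maximum. I should also note that $r=\infty$ forces $[G,o]=[G',o']$ for connected graphs — a point I would justify by observing that $G=\bigcup_s B(o,s)$ by connectedness, so agreement of all finite balls yields a global isomorphism via a compactness/diagonal argument (this is the one genuinely nontrivial point hiding in well-definedness, since it needs local finiteness to run a König-type argument over the finitely many candidate ball-isomorphisms at each radius); combined with the first case of the definition, this shows $\rho$ is consistently defined.

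Next, the metric axioms. Non-negativity and the range $[0,1]$ are clear from the formula. Symmetry holds because $=$ on isomorphism classes is symmetric, so the set $S$ and hence $r$ is symmetric in the two arguments. For $\rho([G,o],[G',o'])=0$: by the formula this happens only in the first case (since otherwise $\rho=\frac{1}{1+r}\ge\frac{1}{1+\infty}$ is $0$ only when $r=\infty$, which as just discussed also forces $[G,o]=[G',o']$), so $\rho$ vanishes exactly on the diagonal.

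The main content is the ultrametric inequality, which also subsumes the ordinary triangle inequality. Write $[G_i,o_i]$, $i=1,2,3$, and let $r_{ij}$ be the radius parameter associated to the pair $(i,j)$. If any two of the three classes coincide the inequality is trivial, so assume they are pairwise distinct; then I must show $\frac{1}{1+r_{12}}\le\max\{\frac{1}{1+r_{13}},\frac{1}{1+r_{23}}\}$, equivalently $r_{12}\ge\min\{r_{13},r_{23}\}$. Set $r=\min\{r_{13},r_{23}\}$. For every $s\le r$ (and $s\in\NN$) we have $[B_{G_1}(o_1,s),o_1]=[B_{G_3}(o_3,s),o_3]$ because $s\le r_{13}$ and $S_{13}$ is downward closed, and similarly $[B_{G_3}(o_3,s),o_3]=[B_{G_2}(o_2,s),o_2]$; composing these isomorphisms gives $[B_{G_1}(o_1,s),o_1]=[B_{G_2}(o_2,s),o_2]$, so $s\in S_{12}$. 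Hence $S_{12}$ contains $\{0,1,\ldots,r\}$, so $r_{12}=\sup S_{12}\ge r=\min\{r_{13},r_{23}\}$, as required. I expect the only real obstacle to be the bookkeeping in the well-definedness step — specifically the claim that matching all finite balls forces equality of the rooted graphs — which requires invoking local finiteness and a diagonal argument; the ultrametric inequality itself is then a short consequence of the downward-closedness of $S$ and transitivity of isomorphism.
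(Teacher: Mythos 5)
Your proposal is correct and follows essentially the same route as the paper's proof: well-definedness via representative-independence of the radius parameter, and the ultrametric inequality via $r_{12}\ge\min\{r_{13},r_{23}\}$, obtained by composing ball isomorphisms at radius $\min\{r_{13},r_{23}\}$. The only difference is that you treat more carefully than the paper the point that agreement of all finite balls forces isomorphism of the rooted graphs (needed so that the supremum is attained for distinct classes and $\rho$ vanishes only on the diagonal), a step the paper passes over silently.
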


\begin{proof}
Let $[G_1,o_1],[G_1',o_1'],[G_2,o_2],[G_2',o_2'] \in \wh{\G}$ be arbitrary. Suppose that
\[
[G_1,o_1] = [G_2,o_2]
\]
and
\[
[G_1',o_1'] = [G_2',o_2'],
\]
and consider the following two cases.

\noindent\textbf{Case 1.} If $[G_1,o_1] = [G_1',o_1']$, then $[G_2,o_2] = [G_2',o_2']$, and so
\[
\rho([G_1,o_1],[G_1',o_1']) = 0 = \rho([G_2,o_2],[G_2',o_2']).
\]

\noindent\textbf{Case 2.} If $[G_1,o_1] \neq [G_1',o_1']$, then $[G_2,o_2] \neq [G_2',o_2']$. Let $r$ and $s$ be the largest integers such that
\[
[B_{G_1}(o_1,r),o_1] = [B_{G_1'}(o_1',r),o_1']
\]
and
\[
[B_{G_2}(o_2,s),o_2] = [B_{G_2'}(o_2',s),o_2'].
\]
Then
\[
[B_{G_1}(o_1,s),o_1] = [B_{G_2}(o_2,s),o_2] = [B_{G_2'}(o_2',s),o_2'] = [B_{G_1'}(o_1',s),o_1'],
\]
which means $s \leq r$. Similarly, $r \leq s$. Hence $r = s$, and so $\rho$ is well-defined.

To see that $\rho$ is a metric, it suffices to show that it satisfies the triangle inequality because the other conditions follow immediately from the definition. In fact, we will prove that
\[
\rho([G_1,o_1],[G_2,o_2]) \leq \max\{\rho([G_1,o_1],[G_3,o_3]),\rho([G_3,o_3],[G_2,o_2])\}.
\]
This inequality holds if any two of the three rooted graphs are equal. Suppose that
\[
\rho([G_1,o_1],[G_2,o_2]) = \frac{1}{1 + r},
\]
\[
\rho([G_1,o_1],[G_3,o_3]) = \frac{1}{1 + s},
\]
and
\[
\rho([G_3,o_3],[G_2,o_2]) = \frac{1}{1 + t}
\]
where $r$, $s$, and $t$ are the largest integers as defined previously. Since
\[
[B_{G_1}(o_1,s),o_1] = [B_{G_3}(o_3,s),o_3]
\]
and
\[
[B_{G_3}(o_3,t),o_3] = [B_{G_2}(o_2,t),o_2],
\]
it follows that
\[
[B_{G_1}(o_1,\min\{s,t\}),o_1] = [B_{G_2}(o_2,\min\{s,t\}),o_2].
\]
By definition, $\min\{s,t\} \leq r$, which means
\[
\frac{1}{1 + r} \leq \frac{1}{1 + \min\{s,t\}} = \max\left\{\frac{1}{1 + s},\frac{1}{1 + t}\right\},
\]
and the result follows.
\end{proof}

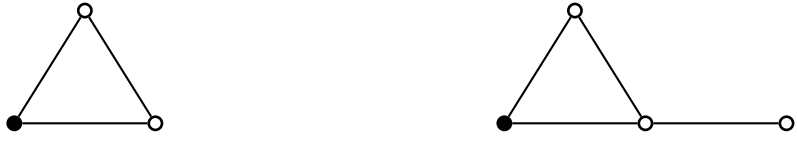
\begin{figure}
\begin{minipage}[b]{0.48\linewidth}
\begin{center}
\begin{tikzpicture}[scale=0.75]
\GraphInit[vstyle=Simple]
\Vertex[x=0,y=0,style={line width=1pt,fill=black,minimum size=5pt}]{A}
\Vertex[x=2.5,y=0,style={line width=1pt,fill=white,minimum size=5pt}]{B}
\Vertex[x=1.25,y=2,style={line width=1pt,fill=white,minimum size=5pt}]{C}
\Edges(A,B,C,A)
\end{tikzpicture}
\end{center}
\end{minipage}
\begin{minipage}[b]{0.48\linewidth}
\begin{center}
\begin{tikzpicture}[scale=0.75]
\GraphInit[vstyle=Simple]
\Vertex[x=0,y=0,style={line width=1pt,fill=black,minimum size=5pt}]{A}
\Vertex[x=2.5,y=0,style={line width=1pt,fill=white,minimum size=5pt}]{B}
\Vertex[x=1.25,y=2,style={line width=1pt,fill=white,minimum size=5pt}]{C}
\Vertex[x=5,y=0,style={line width=1pt,fill=white,minimum size=5pt}]{D}
\Edges(A,B,C,A)
\Edges(B,D)
\end{tikzpicture}
\end{center}
\end{minipage}
\caption[An example of distance]{An example of distance.}
\label{fig:example_of_distance}
\end{figure}

As with most concepts, examples are an excellent way to understand this metric space. Consider the graphs $[G,o]$ and $[H,p]$ shown in Figure \ref{fig:example_of_distance}. Then
\[
[B_G(o,1),o] = [B_H(p,1),p],
\]
and 1 is the largest integer for which this is true. Hence $\rho([G,o],[H,p]) = 1/2$. Note that, in either case, a ball of radius 2 is the entire graph.

When a metric space is defined, it is helpful to look at functions out of the space. An important example of such a function is given in the following proposition.

\begin{prop}\label{degree_function}
The \emph{degree function} $\deg$ defined by $[G,o] \mapsto \deg_G(o)$ for all $[G,o] \in \wh{\G}_M$ is $M$-Lipschitz.
\end{prop}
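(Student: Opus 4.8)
The plan is to unwind the definitions and show that whenever two rooted graphs are close in $\rho$, their root degrees agree exactly. Concretely, I would fix $[G,o],[G',o'] \in \wh{\G}_M$ and consider the quantity $r = \sup\{s \in \NN : [B_G(o,s),o] = [B_{G'}(o',s),o']\}$ that appears in the definition of $\rho$. The key observation is that the degree of the root is already determined by the ball of radius $1$: if $[B_G(o,1),o] = [B_{G'}(o',1),o']$, then an isomorphism of these rooted balls carries the neighbours of $o$ bijectively to the neighbours of $o'$, so $\deg_G(o) = \deg_{G'}(o')$ and the Lipschitz inequality $|\deg_G(o) - \deg_{G'}(o')| \le M\,\rho([G,o],[G',o'])$ holds trivially with both sides behaving well.

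So I would split into two cases according to whether $r \ge 1$ or $r = 0$ (the case $[G,o]=[G',o']$ being immediate, as both sides are $0$). In the first case, $r \ge 1$ forces $[B_G(o,1),o] = [B_{G'}(o',1),o']$, hence $\deg_G(o) = \deg_{G'}(o')$ by the argument above, so the left-hand side $|\deg_G(o) - \deg_{G'}(o')|$ is $0$ and the inequality is automatic. In the second case, $r = 0$ means $\rho([G,o],[G',o']) = \frac{1}{1+0} = 1$, while $\deg_G(o)$ and $\deg_{G'}(o')$ are both integers in $\{0,1,\ldots,M\}$, so their difference is at most $M$; thus $|\deg_G(o) - \deg_{G'}(o')| \le M = M \cdot 1 = M\,\rho([G,o],[G',o'])$.

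Combining the cases gives $|\deg_G(o) - \deg_{G'}(o')| \le M\,\rho([G,o],[G',o'])$ for all pairs, which is precisely the statement that $\deg$ is $M$-Lipschitz. I do not anticipate a serious obstacle here; the only point requiring a little care is the claim that an isomorphism of radius-$1$ balls preserves the root degree, which follows because a rooted-graph isomorphism $\varphi : B_G(o,1) \to B_{G'}(o',1)$ with $\varphi(o) = o'$ restricts to a bijection between the neighbourhoods $\{v : ov \in E(G)\}$ and $\{w : o'w \in E(G')\}$. One should also note why the codomain of $\deg$ is consistent with the claim: on $\wh{\G}_M$ the degree is bounded by $M$, which is exactly what makes the $r = 0$ bound work, and this is where the hypothesis $[G,o] \in \wh{\G}_M$ (rather than merely $\wh{\G}$) is essential — without a degree bound no finite Lipschitz constant could exist.
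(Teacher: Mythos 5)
Your proposal is correct and is essentially the paper's own argument: both rest on the observation that the root degree is determined by the radius-$1$ ball, so unequal degrees force $\rho=1$, where the bound $M$ on degrees in $\wh{\G}_M$ closes the estimate. The only cosmetic difference is that you case-split on $r\geq 1$ versus $r=0$ while the paper splits on whether the two degrees agree, which is the contrapositive of the same step.
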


\begin{proof}
Let $[G,o],[G',o'] \in \wh{\G}_M$ be arbitrary. If $\deg_G(o) = \deg_{G'}(o')$, the result follows. Suppose that the two values are distinct. Then
\[
\rho([G,o],[G',o']) = 1,
\]
and so
\[
|\deg_G(o) - \deg_{G'}(o')| \leq M = M \rho([G,o],[G',o']). \qedhere
\]
\end{proof}

Note that the domain of the degree function is $\wh{\G}_M$, not all of $\wh{\G}$. Indeed this subspace is better-behaved than $\wh{\G}$. The theorem below reinforces this opinion.

\begin{theo}
The space $\wh{\G}_M$ is complete and totally bounded. That is, $\wh{\G}_M$ is compact.
\end{theo}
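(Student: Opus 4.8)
The plan is to prove completeness and total boundedness separately, then invoke the standard fact that a metric space is compact if and only if it is complete and totally bounded.

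For total boundedness, fix $\e > 0$ and choose $n \in \NN$ with $1/(1+n) < \e$. I would argue that there are only finitely many isomorphism classes of rooted graphs of the form $[B_G(o,n),o]$ with $[G,o] \in \wh{\G}_M$: such a ball has at most $1 + M + M(M-1) + \cdots + M(M-1)^{n-1}$ vertices, and there are only finitely many graphs (up to rooted isomorphism) on a bounded number of vertices. Picking one representative $[G_i,o_i]$ from each such class gives finitely many points, and every $[G,o] \in \wh{\G}_M$ satisfies $\rho([G,o],[G_i,o_i]) \leq 1/(1+n) < \e$ for the $i$ matching its $n$-ball. Hence finitely many $\e$-balls cover $\wh{\G}_M$.

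For completeness, let $([G_k,o_k])_k$ be a Cauchy sequence in $\wh{\G}_M$. For each $r \in \NN$, Cauchyness gives an index $N_r$ beyond which all terms agree on their $r$-balls, so the isomorphism class $[B_{G_k}(o_k,r),o_k]$ stabilises to some class $\beta_r$; moreover the stabilised balls are nested and compatible, i.e.\ $\beta_r$ is (isomorphic to) the radius-$r$ ball of $\beta_{r+1}$. The key step is to build a single limit rooted graph $[G,o]$ whose radius-$r$ ball is $\beta_r$ for every $r$: I would realise this as a direct limit (union) of a compatible chain of representatives of the $\beta_r$, choosing the isomorphisms so that each ball embeds as the induced subgraph on vertices within distance $r$ of the root. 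The resulting $G$ is connected (every vertex lies in some ball, hence is joined to $o$), and locally finite with maximal degree at most $M$ (each vertex's degree is already determined inside some $\beta_r$ and is bounded by $M$), so $[G,o] \in \wh{\G}_M$. Finally $\rho([G_k,o_k],[G,o]) \to 0$ because for $k \geq N_r$ the $r$-balls coincide with $\beta_r = [B_G(o,r),o]$.

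The main obstacle is the limit construction: one must check that the compatible system of finite balls genuinely assembles into a well-defined rooted graph — that the choices of compatible isomorphisms can be made coherently (a routine but slightly fiddly inductive construction of an increasing chain of graphs), and that the shortest-path metric of the resulting $G$ restricts correctly to each ball, so that $B_G(o,r)$ really is $\beta_r$ rather than something larger. Everything else — connectivity, the degree bound, and the convergence estimate — follows quickly once the limit object is in hand.
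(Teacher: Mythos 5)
Your proposal is correct and follows essentially the same route as the paper: total boundedness via the finitely many isomorphism classes of radius-$r$ balls with degree bound $M$, and completeness via assembling the stabilised balls of the Cauchy sequence into a direct limit (the paper realises this as a quotient of a tagged disjoint union of balls glued along isometric embeddings, which is exactly the ``coherent chain'' construction you describe). The technical point you flag --- that the shortest-path metric of the limit graph must restrict so that $B_G(o,r)$ really is $\beta_r$ --- is the same issue the paper's explicit construction is designed to handle, and it resolves as you expect since geodesics from the root to a vertex at distance at most $r$ stay inside the $r$-ball.
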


\begin{proof}[Proof of completeness.]
Let $([G_n,o_n])_{n=1}^\infty$ be a Cauchy sequence of graphs in $(\wh{\G}_M,\rho)$. For every positive integer $n$, define
\[
r_n = \sup\{r \in \NN ~:~ \forall k \in \NN ~~ [B_{G_n}(o_n,r),o_n] = [B_{G_{n+k}}(o_{n+k},r),o_{n+k}]\}.
\]
To simplify the notation, let $B_n = B_{G_n}(o_n,r_n)$. Furthermore, let
\[
(\mathcal{V},\mathcal{E}) = \left(\bigcup_{i=1}^\infty V(B_i) \times \{i\},\bigcup_{i=1}^\infty E(B_i) \times \{i\}\right).
\]

Note that there exists an isometric embedding $j_n : B_n \hookrightarrow B_{n+1}$, which we use to define an equivalence relation $\approx$ as follows:
\[
(u,n) \approx (v,m)
\]
if and only if
\[
j_{n+m} \circ j_{n+m-1} \circ \cdots \circ j_n(u) = j_{n+m} \circ j_{n+m-1} \circ \cdots \circ j_m(v)
\]
for all $(u,n),(v,m) \in \mc{V}$.

Following that, let $G$ be the graph with $V(G) = \mathcal{V}/\hspace{-0.5em}\approx$, and whose edge set is defined as follows:
\[
\{[u,n],[v,m]\} \in E(G)
\]
if and only if
\[
\{j_{n+m} \circ \cdots \circ j_n(u),j_{n+m} \circ \cdots \circ j_m(v)\} \in E(B_{n+m}).
\]

Having completed the construction of the rooted graph $[G,o]$ where $o = [o_1,1]$, we will now show that this is the limit of the Cauchy sequence defined at the beginning of the proof.

Throughout, we will assume that $\e \leq 1$ because the distance $\rho$ is bounded above by 1. Given $\e > 0$, there exists a nonnegative integer $N$ such that for all $k \in \NN$,
\[
\rho([G_N,o_N],[G_{N+k},o_{N+k}]) < \e
\]
because $([G_n,o_n])_{n=1}^\infty$ is Cauchy. If $n \geq N$, then
\begin{align*}
\rho([G_n,o_n],[G,o]) &\leq \max\{\rho([G_n,o_n],[G_N,o_N]),\rho([G_N,o_N],[G,o])\}\\
                      &< \max\left\{\e,\frac{1}{1 + r_N}\right\}.
\end{align*}
Let $R_k$ be the radius of the ball corresponding to $\rho([G_N,o_N],[G_{N+k},o_{N+k}])$. As stated above,
\[
\frac{1}{\e} - 1 < R_k
\]
for all $k \geq 0$. Consider the following cases. If
\[
\ceil{\frac{1}{\e} - 1} < R_k
\]
for all $k \geq 0$, then
\[
[B_{G_N}(o_N,\ceil{1/\e - 1} + 1),o_N] = [B_{G_{N+k}}(o_{N+k},\ceil{1/\e - 1} + 1),o_{N+k}]
\]
for all $k \geq 0$, and so
\[
\ceil{\frac{1}{\e} - 1} + 1 \leq r_N,
\]
which means $1/\e - 1 < r_N$. That is,
\[
\frac{1}{1 + r_N} < \e.
\]
On the other hand, suppose that
\[
\ceil{\frac{1}{\e} - 1} = R_{k_0}
\]
for some $k_0 \geq 0$. Since $\ceil{1/\e - 1} \leq R_k$,
\[
[B_{G_N}(o_N,R_{k_0}),o_N] = [B_{G_{N+k}}(o_{N+k},R_{k_0}),o_{N+k}]
\]
for all $k \geq 0$, and so
\[
R_{k_0} \leq r_N.
\]
By definition of $r_N$,
\[
[B_{G_N}(o_N,r_N),o_N] = [B_{G_{N+k_0}}(o_{N+k_0},r_N),o_{N+k_0}].
\]
Then
\[
\frac{1}{1 + R_{k_0}} \leq \frac{1}{1 + r_N},
\]
which means $r_N \leq R_{k_0}$. It follows that $R_{k_0} = r_N$. Then $1/\e < R_{k_0}$ implies that
\[
\frac{1}{1 + r_N} < \e.
\]
In either case,
\[
\rho([G_n,o_n],[G,o]) < \e.
\]
Thus $([G_n,o_n])_{n=1}^\infty$ converges to $[G,o]$.
\end{proof}

\begin{proof}[Proof of total boundedness.]
To show that $\wh{\G}_M$ is totally bounded, let $\e > 0$ and let
\[
r = \ceil{\frac{1}{\e} - 1}.
\]
Denote by $F$ the set of all rooted graphs in $\wh{\G}_M$ of radius at most $r$ where the radius is the supremum of the distances between each vertex and the root. Observe that $F$ is a finite set because any graph of radius $r$ has at most
\[
1 + M \sum_{i=1}^{r} (M - 1)^{i-1} \leq (M + 1)^r
\]
vertices, and so $|F|$ is at most the number of graphs on $(M + 1)^r$ or fewer vertices.

Now, given $[G,o] \in \wh{\G}_M$, the rooted graph $[B_G(o,r),o]$ is of radius $r$, and so it belongs to $F$. Furthermore,
\[
\rho([G,o],[B_G(o,r),o]) \leq \frac{1}{1 + r} \leq \e.
\]
Hence $\wh{\G}_M$ is totally bounded.
\end{proof}

The larger space $(\wh{\G},\rho)$ is \emph{not} compact. Consider the sequence of $(1,n)$-bipartite graphs $K_{1,n}$ each rooted at the vertex of degree $n$. Figure \ref{fig:star_graph_sequence} depicts a part of this sequence. This sequence has no convergent subsequence because the degree of the root is increasing. Note that such a sequence would not exist in the subspace $(\wh{\G}_M,\rho)$ where the degree of the root is at most $M$.

\begin{figure}
\begin{center}
\begin{tikzpicture}[scale=0.75]
\GraphInit[vstyle=Simple]
\Vertex[x=0,y=0,style={line width=1pt,fill=black,minimum size=5pt}]{A}
\Vertex[x=0,y=1,style={line width=1pt,fill=white,minimum size=5pt}]{A1}
\Vertex[x=4,y=0,style={line width=1pt,fill=black,minimum size=5pt}]{B}
\Vertex[x=4,y=1,style={line width=1pt,fill=white,minimum size=5pt}]{B1}
\Vertex[x=4,y=-1,style={line width=1pt,fill=white,minimum size=5pt}]{B2}
\Vertex[x=8,y=0,style={line width=1pt,fill=black,minimum size=5pt}]{C}
\Vertex[x=8,y=1,style={line width=1pt,fill=white,minimum size=5pt}]{C1}
\Vertex[x=8,y=-1,style={line width=1pt,fill=white,minimum size=5pt}]{C2}
\Vertex[x=7,y=0,style={line width=1pt,fill=white,minimum size=5pt}]{C3}
\Vertex[x=12,y=0,style={line width=1pt,fill=black,minimum size=5pt}]{D}
\Vertex[x=12,y=1,style={line width=1pt,fill=white,minimum size=5pt}]{D1}
\Vertex[x=12,y=-1,style={line width=1pt,fill=white,minimum size=5pt}]{D2}
\Vertex[x=11,y=0,style={line width=1pt,fill=white,minimum size=5pt}]{D3}
\Vertex[x=13,y=0,style={line width=1pt,fill=white,minimum size=5pt}]{D4}
\Vertex[x=16,y=0,style={line width=1pt,fill=black,minimum size=5pt}]{E}
\Vertex[x=16,y=1,style={line width=1pt,fill=white,minimum size=5pt}]{E1}
\Vertex[x=16,y=-1,style={line width=1pt,fill=white,minimum size=5pt}]{E2}
\Vertex[x=15,y=0,style={line width=1pt,fill=white,minimum size=5pt}]{E3}
\Vertex[x=17,y=0,style={line width=1pt,fill=white,minimum size=5pt}]{E4}
\Vertex[x=16.71,y=0.71,style={line width=1pt,fill=white,minimum size=5pt}]{E5}
\Edges(A,A1)
\Edges(B,B1)
\Edges(B,B2)
\Edges(C,C1)
\Edges(C,C2)
\Edges(C,C3)
\Edges(D,D1)
\Edges(D,D2)
\Edges(D,D3)
\Edges(D,D4)
\Edges(E,E1)
\Edges(E,E2)
\Edges(E,E3)
\Edges(E,E4)
\Edges(E,E5)
\end{tikzpicture}
\end{center}
\caption[The space $\wh{\G}$ is not compact]{The space $\wh{\G}$ is not compact.}
\label{fig:star_graph_sequence}
\end{figure}

Before moving to the next section, there are a few interesting results concerning continuous functions on $\wh{\G}_M$. Denote by $\wh{\G}_M^0$ the subspace of $\wh{\G}_M$ of finite rooted graphs.

\begin{prop}
The set $\wh{\G}_M^0$ is a dense subspace of $\wh{\G}_M$.
\end{prop}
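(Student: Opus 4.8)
The plan is to mimic the total boundedness argument: given $[G,o]\in\wh{\G}_M$ and $\e>0$, I will exhibit a finite rooted graph within distance $\e$ of $[G,o]$, namely a suitable closed ball about the root. Concretely, set $r=\ceil{1/\e-1}$ and take the candidate $[B_G(o,r),o]$.

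First I would verify that $[B_G(o,r),o]$ really lies in $\wh{\G}_M^0$. It is connected, because every vertex of $B(o,r)$ is joined to $o$ by a shortest path in $G$ of length at most $r$, and such a path stays inside $B(o,r)$. It is finite by the vertex count already used in the proof of total boundedness (a locally finite connected graph of radius at most $r$ with maximal degree at most $M$ has at most $(M+1)^r$ vertices). And its maximal degree is at most $M$, since the degree of a vertex in an induced subgraph never exceeds its degree in the ambient graph.

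Next I would bound $\rho([G,o],[B_G(o,r),o])$. The key point — and the only place any care is needed — is that for every $s\le r$ the rooted balls $[B_G(o,s),o]$ and $[B_{B_G(o,r)}(o,s),o]$ coincide. This reduces to checking that distances from $o$ agree in $G$ and in $B_G(o,r)$ for vertices at distance at most $r$ from $o$: the inequality $d_{B_G(o,r)}(o,v)\ge d_G(o,v)$ is automatic, as passing to a subgraph only deletes edges, while $d_{B_G(o,r)}(o,v)\le d_G(o,v)$ holds because a geodesic from $o$ to $v$ in $G$ of length $d_G(o,v)\le r$ lies entirely in $B(o,r)$, hence in the subgraph $B_G(o,r)$. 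Consequently $B_{B_G(o,r)}(o,s)$ and $B_G(o,s)$ have the same vertex set for $s\le r$, and the induced subgraph on that set is the same whether formed inside $G$ or inside $B_G(o,r)$. Therefore $\rho([G,o],[B_G(o,r),o])\le \tfrac{1}{1+r}\le\e$, which establishes density since $[G,o]$ and $\e$ were arbitrary.

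I do not expect a serious obstacle: the statement is essentially a restatement of the total boundedness computation, the one subtlety being the distance-preservation claim above, which guarantees that truncating $G$ to a ball leaves the relevant combinatorial structure — the isomorphism types of the smaller balls around $o$ — unchanged.
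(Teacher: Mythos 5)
Your proposal is correct and takes essentially the same approach as the paper, which approximates $[G,o]$ by the sequence of rooted balls $[B_G(o,n),o]$ and simply asserts their convergence; you supply the details (membership in $\wh{\G}_M^0$ and the preservation of distances from $o$ under truncation) that the paper leaves implicit.
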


\begin{proof}
Let $[G,o] \in \wh{\G}_M$ be arbitrary. Consider the sequence of finite rooted graphs defined by
\[
[G_n,o_n] = [B_G(o,n),o]
\]
for all positive integers $n$. This sequence converges to $[G,o]$, which means $[G,o]$ lies in the closure of $\wh{\G}_M^0$. The result follows.
\end{proof}

This fact is helpful when searching for continuous functions on the space of rooted graphs. By a result from analysis, a uniformly continuous function on $\wh{\G}_M^0$ induces a unique continuous extension on $\wh{\G}_M$. On the other hand, the next fact implies that all functions on $\wh{\G}_M^0$ are continuous--though not necessarily uniformly continuous.

\begin{lem}
The isolated points of $\wh{\G}_M$ are precisely the finite rooted graphs.
\end{lem}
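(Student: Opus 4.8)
The plan is to prove both directions of the characterization separately. First I would show that every finite rooted graph $[G,o] \in \wh{\G}_M$ is an isolated point. Since $G$ is finite, it has some finite radius $\ell$, meaning $B_G(o,\ell) = G$ and more importantly $B_G(o,\ell) = B_G(o,\ell+1)$. I would then argue that the open ball of radius $\tfrac{1}{1+\ell}$ around $[G,o]$ contains no other point: if $\rho([G,o],[G',o']) < \tfrac{1}{1+\ell}$, then the supremum $r$ in the definition of $\rho$ satisfies $\tfrac{1}{1+r} < \tfrac{1}{1+\ell}$, hence $r \geq \ell+1 > \ell$, so in particular $[B_{G'}(o',\ell+1),o'] = [B_G(o,\ell+1),o] = [G,o]$. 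But then $G'$ has a ball of radius $\ell+1$ isomorphic (as a rooted graph) to all of $G$; since that ball already "closes up" at radius $\ell$, no vertex of $G'$ can lie at distance greater than $\ell$ from $o'$ (a new vertex at distance $\ell+1$ would be adjacent to a vertex at distance $\ell$, contradicting that the radius-$(\ell+1)$ ball equals the radius-$\ell$ ball under the isomorphism), so $[G',o'] = [G,o]$. Thus the ball of radius $\tfrac{1}{1+\ell}$ is the singleton $\{[G,o]\}$.

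For the converse, I would show that an infinite rooted graph $[G,o] \in \wh{\G}_M$ is not isolated by exhibiting, for each positive integer $n$, a rooted graph $[G_n,o_n] \neq [G,o]$ with $\rho([G,o],[G_n,o_n]) \leq \tfrac{1}{1+n}$. The natural candidate is the truncation $[G_n,o_n] = [B_G(o,n),o]$, exactly as in the density proposition just proved. Since $G$ is infinite and connected, it has vertices arbitrarily far from $o$, so for $n$ large enough $B_G(o,n)$ is a proper (finite) subgraph, hence $[B_G(o,n),o] \neq [G,o]$; and by the agreement of balls up to radius $n$, $\rho([G,o],[B_G(o,n),o]) \leq \tfrac{1}{1+n} \to 0$. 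This produces points distinct from $[G,o]$ in every neighborhood, so $[G,o]$ is not isolated.

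The main obstacle — really the only subtle point — is the argument in the first direction that a rooted graph whose radius-$(\ell+1)$ ball coincides with a fixed finite graph of radius $\ell$ must equal that graph. One must rule out the possibility that $G'$ has additional structure beyond radius $\ell$ that simply doesn't show up until radius $\ell+2$ or later. The resolution is the observation that in a connected graph, every vertex at distance $\ell+1$ from the root has a neighbor at distance $\ell$; so if the ball of radius $\ell+1$ is isomorphic as a rooted graph to $G = B_G(o,\ell)$ (which has no vertices at distance $\ell+1$), then $G'$ itself has no vertex at distance $\ell+1$ from $o'$, and by induction none at any greater distance, forcing $G' = B_{G'}(o',\ell)$. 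I would state this cleanly, perhaps as the remark that for a finite rooted graph the radius $\ell$ satisfies $B_G(o,\ell) = B_G(o,\ell+1)$, and then the whole graph $G'$ is recovered from its radius-$(\ell+1)$ ball. Everything else is routine manipulation of the defining formula for $\rho$ together with the density proposition already in hand.
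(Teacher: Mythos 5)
Your proposal is correct and follows essentially the same route as the paper: the non-isolation of infinite rooted graphs via the truncations $[B_G(o,n),o]$ (the paper phrases this contrapositively, noting that the sequence of balls converging to an isolated point must be eventually constant), and the isolation of a finite rooted graph of radius $\ell$ by showing any $[G',o']$ within distance less than $\tfrac{1}{1+\ell}$ must agree with $[G,o]$ on the ball of radius $\ell+1$ and hence equal it. Your explicit "closing-up" argument for why $G'$ can have no vertices beyond distance $\ell$ is exactly the point the paper's proof uses (somewhat more tersely) when it asserts that $r>R$ forces the radius of $[H,p]$ to be at most $R$.
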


\begin{proof}
If $[G,o] \in \wh{\G}_M$ is an isolated point, then any sequence that converges to it is eventually constant. In particular, this includes the sequence of balls rooted at $o \in V(G)$. Conversely, assume that $[G,o] \in \wh{\G}_M^0$, and let
\[
R = \inf\{r \in \NN ~:~ [G,o] = [B_G(o,r),o]\},
\]
which is the \emph{radius} of $[G,o]$. If $[G,o] \neq [H,p]$ for some $[H,p] \in \wh{\G}_M$, then
\[
\rho([G,o],[H,p]) = \frac{1}{1 + r}
\]
where $r$ is the largest nonnegative integer for which $[B_G(o,r),o] = [B_H(p,r),p]$. If $r > R$, the radius of $[H,p]$ must be at most $R$, but this means
\[
[G,o] = [B_G(o,R),o] = [B_H(p,R),p] = [H,p],
\]
which is a contradiction. Hence $r \leq R$, and so
\[
\rho([G,o],[H,p]) \geq \frac{1}{1 + R}.
\]
That is, $[G,o]$ is an isolated point.
\end{proof}

\begin{cor}
The space $\wh{\G}_M^0$ is topologically discrete.
\end{cor}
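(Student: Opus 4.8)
The plan is to deduce the statement immediately from the preceding Lemma. Recall that a topological space is discrete exactly when each of its singletons is open, equivalently when every one of its points is isolated. So the goal reduces to showing that each $[G,o] \in \wh{\G}_M^0$ is an isolated point of the subspace $\wh{\G}_M^0$.

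First I would invoke the Lemma: since $[G,o]$ is a finite rooted graph, it is an isolated point of the full space $\wh{\G}_M$. Concretely, writing $R$ for the radius of $[G,o]$, the proof of the Lemma shows that $\rho([G,o],[H,p]) \geq 1/(1+R)$ for every $[H,p] \in \wh{\G}_M$ distinct from $[G,o]$. Hence the ball $\{[H,p] \in \wh{\G}_M : \rho([G,o],[H,p]) < 1/(1+R)\}$ equals $\{[G,o]\}$ and is open in $\wh{\G}_M$.

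Next I would pass to the subspace topology. By definition, the open sets of $\wh{\G}_M^0$ are the intersections $U \cap \wh{\G}_M^0$ with $U$ open in $\wh{\G}_M$; taking $U$ to be the ball above yields $\{[G,o]\}$ as an open subset of $\wh{\G}_M^0$. Since $[G,o]$ was arbitrary, every singleton of $\wh{\G}_M^0$ is open, so $\wh{\G}_M^0$ is topologically discrete. I do not anticipate any real obstacle here: the substantive content lies entirely in the Lemma, and the only remaining point is the routine observation that an isolated point of a space remains isolated in any subspace containing it.
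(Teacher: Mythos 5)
Your proposal is correct and follows exactly the route the paper intends: the corollary is stated as an immediate consequence of the preceding Lemma, and your deduction---each finite rooted graph is isolated in $\wh{\G}_M$ by the Lemma, hence its singleton is open in the subspace topology on $\wh{\G}_M^0$---is precisely that argument, spelled out with the explicit radius bound $1/(1+R)$ from the Lemma's proof. Nothing is missing.
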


\section{The Law}
Let $\M_M$ be the space of all probability measures on $\wh{\G}_M$. Denote by $\mb{C}(\wh{\G}_M)$ the set of real-valued bounded continuous functions whose domain is $\wh{\G}_M$.

\begin{defn}
A sequence $(\mu_n)_{n=1}^\infty$ of probability measures on $\wh{\G}_M$ \emph{converges weakly} to some $\mu \in \M_M$ if
\[
\lim_{n \to \infty} \int f~d\mu_n = \int f~d\mu
\]
for all $f \in \mb{C}(\wh{\G}_M)$. The \emph{integral of $f$ with respect to the measure $\mu$} is the expression on the right-hand side, which may also be written as $\mu[f]$. Whenever we integrate a function, we will assume that it is measurable. The measure $\mu$ is known as the \emph{weak limit} of the given sequence. Occasionally, we will use $\mu_n \Rightarrow \mu$ to denote weak convergence of measures.
\end{defn}

Define $\G_M^0$ to be the collection of all isomorphism classes of nonempty finite graphs whose maximal degree is at most $M$. There is no difference between the notation of a finite graph and its isomorphism class, but this does not pose any confusion. For convenience, the elements of $\G_M^0$ will just be called finite graphs.

We equip the vertex set of every graph in $\G_M^0$ with the uniform probability measure. That is, if $G \in \G_M^0$, then the probability of choosing a vertex $v \in V(G)$ is $1/|V(G)|$. What follows is perhaps the most important definition in this paper.

\begin{defn}
The \emph{law} is a function $\Psi : \G_M^0 \to \M_M$ defined as follows: for every graph $G \in \G_M^0$,
\[
\Psi(G)[G_o,o] = \frac{|\Aut(G)o|}{|V(G)|}
\]
if $G_o$ is a connected component of $G$ for some $o \in V(G)$, and $\Psi(G) = 0$ elsewhere. Here $\Aut(G)$ is the group of automorphisms on $G$, and $\Aut(G)o$ is the \emph{orbit} of the vertex $o$ in $G$:
\[
\Aut(G)o = \{v \in V(G) ~:~ \exists \sigma \in \Aut(G) ~~ \sigma(v) = o\}.
\]
The image $\Psi(G)$ of a finite graph $G \in \G_M^0$ is a probability measure on $\wh{\G}_M$ called \emph{the law of $G$}. Usually, we will simply write \emph{the law} when no reference to a specific graph is necessary. The space of laws is the image $\Psi(\wh{\G}_M)$ equipped with the topology of weak convergence; its closure is denoted by $\M_M^0$.
\end{defn}

Occasionally, $|[G_o,o]|$ may be used to denote $|\Aut(G)o|$. This is justified by an equivalent definition of the law of a finite graph that arises from the following fact.

\begin{prop}
If $G \in \G_M^0$, then
\[
|\{(H,p) \in [G_o,o] ~:~ V(H) \subseteq V(G)\}| = |\Aut(G)o|
\]
whenever $[G_o,o]$ is a rooted connected component of $G$.
\end{prop}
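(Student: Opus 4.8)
The plan is to exhibit an explicit bijection between the set $S = \{(H,p) \in [G_o,o] : V(H) \subseteq V(G)\}$ and the orbit $\Aut(G)o$, thereby establishing the equality of cardinalities. The natural candidate map sends a birooted-style object $(H,p) \in S$ to its root $p \in V(G)$. First I would check that this map is well-defined with image inside $\Aut(G)o$: if $(H,p) \cong (G_o,o)$ as rooted graphs and $V(H) \subseteq V(G)$, then $H$ is itself a connected component of $G$ (since $H$ is connected, has vertices in $V(G)$, and is isomorphic to the component $G_o$), and the rooted isomorphism $\varphi : (G_o,o) \to (H,p)$ composed appropriately yields an automorphism of $G$ carrying $o$ to $p$; hence $p \in \Aut(G)o$. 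Conversely, for each $v \in \Aut(G)o$, choosing $\sigma \in \Aut(G)$ with $\sigma(o) = v$, the pair $(\sigma(G_o), v) = (G_v, v)$ lies in $S$ and maps to $v$, so the map is surjective onto $\Aut(G)o$.

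The crux is injectivity, and this is where I expect the main subtlety to lie. Two elements $(H_1, p)$ and $(H_2, p)$ of $S$ with the same root $p$ must be shown equal as birooted graphs — i.e., $H_1 = H_2$ as actual subgraphs of $G$, not merely up to isomorphism. This follows because each $H_i$ is forced to be the connected component $G_p$ of $G$ containing $p$: $H_i$ is connected, contains $p$, and has vertex set inside $V(G)$, so $V(H_i) \subseteq V(G_p)$; and since $H_i \cong G_o \cong G_p$ is a connected graph with the same finite number of vertices and edges as $G_p$, the inclusion $V(H_i) \subseteq V(G_p)$ must be an equality, and likewise the edge sets coincide because $H_i$ inherits exactly the edges of $G$ among its vertices (connectedness plus the isomorphism pins down the edge count). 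Hence $H_1 = G_p = H_2$.

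The one point requiring care throughout is the bookkeeping of what a concrete representative $(H,p)$ of an isomorphism class with $V(H) \subseteq V(G)$ actually is: the elements of $[G_o,o]$ are isomorphism classes, so the set $S$ should be read as the set of concrete rooted graphs in that class whose underlying vertex set sits inside $V(G)$, and one must confirm that such a concrete graph is genuinely a subgraph of $G$ in the induced sense. I would make this explicit at the start. Once that is pinned down, the bijection $(H,p) \mapsto p$ between $S$ and $\Aut(G)o$ is complete, and the proposition follows immediately by counting.
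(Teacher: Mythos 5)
Your proposal matches the paper's proof: both exhibit the map $(H,p) \mapsto p$ as a bijection from $\{(H,p) \in [G_o,o] : V(H) \subseteq V(G)\}$ onto $\Aut(G)o$, with injectivity resting on the observation that $H$ is forced to equal the component $G_p$, and surjectivity obtained by pushing $(G_o,o)$ forward along an automorphism $\sigma$ with $\sigma(o)=p$ to land on $(G_p,p)$. The definitional subtlety you flag at the end --- that a representative of the class with $V(H) \subseteq V(G)$ must genuinely be the connected component $G_p$ of $G$, not merely an abstract graph on a subset of $V(G)$ --- is asserted in the paper with no more justification than you give, so your treatment is, if anything, slightly more explicit about where the care is needed.
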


\begin{proof}
Observe that $(H,p) \in [G_o,o]$ and $V(H) \subseteq V(G)$ imply that $H = G_p$. This means the function
\begin{align*}
f : \{(H,p) \in [G_o,o] ~:~ V(H) \subseteq V(G)\} &\to \Aut(G)o\\
                                            (H,p) &\mapsto p
\end{align*}
is injective. To see that $f$ is surjective, assume that $p \in \Aut(G)o$. Then $\sigma(p) = o$ for some automorphism $\sigma : G \to G$. Graph automorphisms are isometries because they preserve paths. If $r$ is a nonnegative integer, then
\begin{align*}
\sigma B(o,r) &= \{\sigma(x) \in V(G) ~:~ d(x,o) \leq r\}\\
              &= \{\sigma(x) \in V(G) ~:~ d(\sigma(x),p) \leq r\}\\
              &= \{y \in V(G) ~:~ d(y,p) \leq r\}\\
              &= B(p,r)
\end{align*}
because $\sigma$ is a surjective isometry. Hence $(B_G(o,r),o) \cong (B_G(p,r),p)$ for all nonnegative integers $r$. In particular, $(G_o,o) \cong (G_p,p)$. Since $V(G_p) \subseteq V(G)$, the rooted graph $(G_p,p)$ is an element of the domain of $f$. Furthermore, $f(G_p,p) = p$. Thus $f$ is a bijection, and so the domain and codomain have the same cardinality.
\end{proof}

Please note that $[G_o,o]$ is an infinite set, so $|[G_o,o]|$ is just a symbol that denotes the number
\[
|\{(H,p) \in [G_o,o] ~:~ V(H) \subseteq V(G)\}|.
\]
This notation is convenient though as demonstrated in the example below.

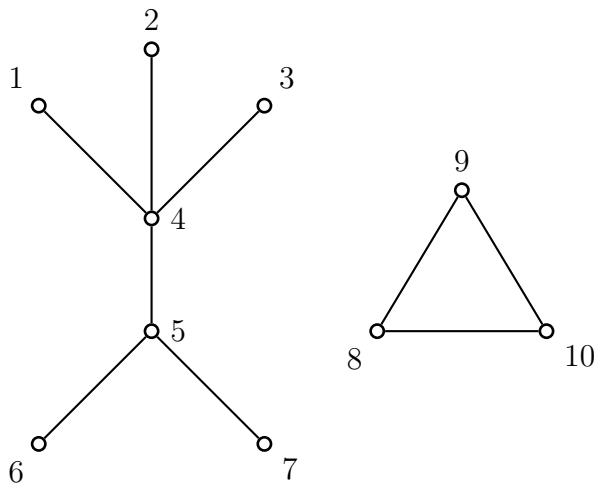
\begin{figure}
\begin{center}
\begin{tikzpicture}[scale=0.75]
\GraphInit[vstyle=Classic]
\Vertex[x=0,y=0,Lpos=240,style={line width=1pt,fill=white,minimum size=5pt}]{6}
\Vertex[x=4,y=0,Lpos=330,style={line width=1pt,fill=white,minimum size=5pt}]{7}
\Vertex[x=2,y=2,style={line width=1pt,fill=white,minimum size=5pt}]{5}
\Vertex[x=2,y=4,style={line width=1pt,fill=white,minimum size=5pt}]{4}
\Vertex[x=2,y=7,Lpos=90,style={line width=1pt,fill=white,minimum size=5pt}]{2}
\Vertex[x=0,y=6,Lpos=120,style={line width=1pt,fill=white,minimum size=5pt}]{1}
\Vertex[x=4,y=6,Lpos=60,style={line width=1pt,fill=white,minimum size=5pt}]{3}
\Edges(6,5)
\Edges(7,5)
\Edges(5,4)
\Edges(4,2)
\Edges(4,1)
\Edges(4,3)
\Vertex[x=6,y=2,Lpos=240,style={line width=1pt,fill=white,minimum size=5pt}]{8}
\Vertex[x=9,y=2,Lpos=330,style={line width=1pt,fill=white,minimum size=5pt}]{10}
\Vertex[x=7.5,y=4.5,Lpos=90,style={line width=1pt,fill=white,minimum size=5pt}]{9}
\Edges(8,10,9,8)
\end{tikzpicture}
\end{center}
\caption[The computation of a law]{A finite graph $G$ with 10 vertices.}
\label{fig:example_of_law}
\end{figure}

To visualize the definition of the law, consider the graph $G$ shown in Figure \ref{fig:example_of_law}. Note that $[G_6,6] = [G_7,7]$. In fact, $|[G_6,6]| = 2$. Furthermore, $[G_8,8] = [G_9,9] = [G_{10},10]$ and $|[G_8,8]| = 3$. Similarly, we compute that $\Psi(G)$ is defined by
\begin{align*}
\Psi(G)[G_1,1] &= 3/10\\
\Psi(G)[G_4,4] &= 1/10\\
\Psi(G)[G_5,5] &= 1/10\\
\Psi(G)[G_6,6] &= 2/10\\
\Psi(G)[G_8,8] &= 3/10.
\end{align*}

Before becoming too excited about $\Psi$, we must be certain that $\Psi(G)$ is actually a probability measure for every finite graph $G$. The following proposition establishes this certainty.

\begin{prop}
Let $G$ be a finite graph. The law $\Psi(G)$ is a well-defined probability measure.
\end{prop}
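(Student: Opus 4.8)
The plan is to verify the two defining properties of a probability measure: that $\Psi(G)$ is a well-defined, countably additive set function on (a subset of) the power set of $\wh{\G}_M$, and that $\Psi(G)$ assigns total mass $1$. Since $\Psi(G)$ is defined by prescribing its value on each singleton $\{[G_o, o]\}$ for $o \in V(G)$ (and zero elsewhere), and the collection of such points is finite, the measure is automatically a discrete measure supported on a finite set; countable additivity is then trivial once we check that the values on singletons are well-defined and that they sum to $1$. So the real content is (i) well-definedness of $\Psi(G)[G_o, o]$ and (ii) the normalization $\sum \Psi(G)[G_o, o] = 1$.

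First I would address well-definedness. The potential ambiguity is that two different vertices $o, o'$ of $G$ may yield the same isomorphism class $[G_o, o] = [G_{o'}, o']$, and we must check the formula $|\Aut(G)o| / |V(G)|$ does not depend on the representative chosen. For this I would invoke the orbit structure: if $[G_o, o] = [G_{o'}, o']$, I claim $o$ and $o'$ lie in the same $\Aut(G)$-orbit, so $\Aut(G)o = \Aut(G)o'$ and the two formulas agree. This claim follows from the preceding proposition, which identifies $|\Aut(G)o|$ with $|\{(H,p) \in [G_o,o] : V(H) \subseteq V(G)\}|$ — a quantity manifestly depending only on the isomorphism class $[G_o,o]$ and on $G$, not on the chosen root. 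Alternatively, one argues directly: an isomorphism $(G_o, o) \cong (G_{o'}, o')$ extends to an automorphism of all of $G$ by acting as the identity (or by any isomorphism) on the other components, sending $o$ to $o'$; hence $o' \in \Aut(G)o$. Either route closes the well-definedness gap.

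Next I would handle normalization. The key fact is that the orbits of $\Aut(G)$ acting on $V(G)$ partition $V(G)$, so $\sum_{\text{orbits } O} |O| = |V(G)|$. Each orbit $O$ contributes a single point to the support of $\Psi(G)$: if $o \in O$ then $[G_o, o]$ is the same rooted graph for every $o \in O$ (since elements of an orbit are similar, hence give isomorphic rooted components), and distinct orbits give distinct points of the support — here I would note that if $[G_o, o] = [G_{o'}, o']$ then, as established above, $o$ and $o'$ are in the same orbit, so the map (orbit) $\mapsto [G_o,o]$ is injective. Therefore
\[
\sum_{[G_o,o] \in \mathrm{supp}(\Psi(G))} \Psi(G)[G_o, o] = \sum_{\text{orbits } O} \frac{|O|}{|V(G)|} = \frac{|V(G)|}{|V(G)|} = 1,
\]
using that $V(G)$ is finite and nonempty so the denominator is a positive integer and the sum is finite. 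Finally I would remark that extending by zero off the support, as stipulated in the preliminaries, yields a genuine countably additive probability measure on $\wh{\G}_M$, since any countable union of sets is a disjoint-union-friendly situation for a measure concentrated on finitely many atoms.

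I expect the main obstacle to be the well-definedness step — specifically, making airtight the assertion that $[G_o,o] = [G_{o'},o']$ forces $o$ and $o'$ into a common $\Aut(G)$-orbit. One must be careful that an isomorphism between rooted \emph{connected components} need not a priori be the restriction of an automorphism of $G$, so the argument requires extending it across the remaining components (which is legitimate precisely because $G$ is finite and the complementary vertex sets $V(G) \setminus V(G_o)$ and $V(G) \setminus V(G_{o'})$ have equal cardinality, indeed induce isomorphic graphs since only the component containing the root changed). Everything else — counting, orbit partition, triviality of countable additivity for a finitely-supported measure — is routine.
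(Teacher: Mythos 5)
Your proof is correct and follows essentially the same route as the paper's: well-definedness via the orbit identity, normalization via the orbit partition of $V(G)$, and trivial countable additivity from finite support. You in fact supply more detail than the paper, which simply asserts that $[G_o,o]=[G_{o'},o']$ implies $\Aut(G)o=\Aut(G)o'$, whereas you correctly identify this as the real content and justify it by extending the rooted-component isomorphism to an automorphism of all of $G$.
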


\begin{proof}
If $[G_o,o] = [G_{o'},o']$, then $\Aut(G)o = \Aut(G)o'$, and so $\Psi(G)$ is well-defined. Let $\{[G_1,o_1],\ldots,[G_k,o_k]\}$ be the set of rooted connected components of $G$. Recall that $\Aut(G)$ partitions $V(G)$, which means $V(G)$ is the disjoint union of the orbits of the roots of the rooted connected components. Then
\[
\Psi(G)(\wh{\G}_M) = \sum_{i=1}^k \Psi(G)[G_i,o_i] = \sum_{i=1}^k \frac{|\Aut(G)o_i|}{|V(G)|} = \frac{|V(G)|}{|V(G)|} = 1.
\]
Since $\Psi(G)$ has finite support, it is trivially countably additive. Hence $\Psi(G)$ is a probability measure.
\end{proof}

The remainder of this section will include various properties of $\Psi$. Most of the interesting results lie in the later sections, but some may use the propositions that follow.

\begin{prop}
The restriction of $\Psi$ to the set of graphs in $\G_M^0$ whose connected components are pairwise nonisomorphic is injective.
\end{prop}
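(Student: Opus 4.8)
The plan is to show that if $G, H \in \G_M^0$ both have pairwise nonisomorphic connected components and $\Psi(G) = \Psi(H)$, then $G \cong H$. The key observation is that the support of the measure $\Psi(G)$ records exactly the rooted connected components of $G$, and that forgetting the roots recovers the \emph{set} of isomorphism classes of connected components of $G$; under the hypothesis this set determines $G$ up to isomorphism.

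First I would pin down the support of $\Psi(G)$. Since every orbit $\Aut(G)o$ is nonempty, $\Psi(G)[\gamma] > 0$ precisely when $\gamma = [G_o,o]$ for some $o \in V(G)$, so the support of $\Psi(G)$ is the set $\{[G_o,o] : o \in V(G)\}$ of rooted connected components of $G$. Next I would consider the map sending a rooted graph to the isomorphism class of its underlying unrooted connected graph; this is well defined because a rooted isomorphism is in particular an unrooted one. Applying it to the support of $\Psi(G)$ produces the set $\mathcal{C}(G)$ of isomorphism classes of connected components of $G$: each rooted connected component $[G_o,o]$ maps to the class of $G_o$, and conversely every component equals $G_o$ for some $o$, so nothing is lost.

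Now $\Psi(G) = \Psi(H)$ forces the two measures to have the same support, hence $\mathcal{C}(G) = \mathcal{C}(H)$. Because the connected components of $G$ are pairwise nonisomorphic, taking one representative from each class in $\mathcal{C}(G)$ and forming the disjoint union reconstructs $G$ up to isomorphism; the same applies to $H$ and $\mathcal{C}(H)$. Since $\mathcal{C}(G) = \mathcal{C}(H)$, we conclude $G \cong H$, which is exactly the asserted injectivity.

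There is no deep obstacle here; the only point requiring attention is the bookkeeping that the hypothesis ``pairwise nonisomorphic components'' is precisely what allows one to pass back from the \emph{set} $\mathcal{C}(G)$ to $G$. In particular, a single component may appear in the support of $\Psi(G)$ with several inequivalent roots, but this is harmless since only the set of underlying unrooted classes is used. It is also worth noting (to show the restriction cannot be dropped) that for any nonempty connected $C \in \G_M^0$ one has $\Psi(C) = \Psi(C \sqcup C)$, since duplicating the graph doubles every orbit size and the vertex count simultaneously; thus $\Psi$ is genuinely non-injective on all of $\G_M^0$.
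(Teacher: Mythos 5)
Your proof is correct and follows essentially the same route as the paper: both arguments observe that $\Psi(G)=\Psi(H)$ forces the two measures to have the same support, identify that support with the set of rooted connected components, and then use the pairwise-nonisomorphism hypothesis to recover the graph from its set of component classes. Your version is slightly more explicit about the passage from rooted to unrooted classes, and your closing counterexample $\Psi(C)=\Psi(C\sqcup C)$ is exactly the remark the paper makes immediately after the proposition.
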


\begin{proof}
Let $G$ and $H$ be graphs in $\G_M^0$ whose connected components are pairwise nonisomorphic. Suppose that $\Psi(G) = \Psi(H)$.

Given a root $o \in V(G)$, consider the rooted connected component $[G_o,o]$ of $G$. By definition of $\Psi(G)$, we know that $\Psi(G)[G_o,o] \neq 0$, and so $\Psi(H)[G_o,o] \neq 0$. Hence $[G_o,o]$ is a rooted connected component of $H$. That is, $[G_o,o] = [H_p,p]$ for some $p \in V(H)$.

Thus every rooted connected component of $G$ corresponds to a rooted connected component of $H$. Furthermore, $[H_p,p]$ is unique because the components in $H$ are pairwise nonisomorphic. It follows that $G$ is isomorphic to $H$.
\end{proof}

Note that $\Psi$ itself is not injective. For example, if $G$ consists of two disjoint copies of $K_3$ and $H$ is just one copy, then $\Psi(G) = \Psi(H)$, but the graphs $G$ and $H$ are not isomorphic.

The preceding proposition shows that to calculate the law of a certain graph, it suffices to consider only one copy of each component. A similar simplification occurs in the case of vertex-transitive graphs. A vertex-transitive graph $G \in \G_M^0$ has exactly one rooted connected component $[G_o,o]$. Its law is defined by $\Psi(G)[G_o,o] = 1$. That is, $\Psi(G)$ is the Dirac measure on the point $[G_o,o]$. Such measures will appear in a slightly more general context later on in the report.

We will use the next proposition several times to simplify the computation of integrals.

\begin{prop}
If $\{[G_1,o_1],\ldots,[G_k,o_k]\}$ is the set of the rooted connected components of a graph $G \in \G_M^0$, then
\[
\int f~d\Psi(G) = \frac{1}{|V(G)|} \sum_{x \in V(G)} f[G_x,x]
\]
for every real-valued function $f$ whose domain is $\wh{\G}_M$.
\end{prop}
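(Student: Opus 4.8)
The plan is to unwind both sides of the claimed identity against the definition of $\Psi(G)$ and the partition of $V(G)$ into automorphism orbits. Since $\Psi(G)$ is a probability measure with finite support concentrated on the rooted connected components $[G_1,o_1],\ldots,[G_k,o_k]$, the integral on the left is simply the finite sum
\[
\int f~d\Psi(G) = \sum_{i=1}^{k} f[G_i,o_i]\,\Psi(G)[G_i,o_i] = \sum_{i=1}^{k} f[G_i,o_i]\,\frac{|\Aut(G)o_i|}{|V(G)|}.
\]
So after pulling out the common factor $1/|V(G)|$, the task reduces to showing
\[
\sum_{i=1}^{k} f[G_i,o_i]\,|\Aut(G)o_i| = \sum_{x \in V(G)} f[G_x,x].
\]

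First I would recall the fact, already established in the proof that $\Psi(G)$ is a probability measure, that $\Aut(G)$ partitions $V(G)$ into orbits, and that $\{\Aut(G)o_1,\ldots,\Aut(G)o_k\}$ is exactly that partition; this is why $\sum_i |\Aut(G)o_i| = |V(G)|$ in the first place. Then I would reorganize the right-hand sum by grouping vertices according to which orbit they lie in:
\[
\sum_{x \in V(G)} f[G_x,x] = \sum_{i=1}^{k} \sum_{x \in \Aut(G)o_i} f[G_x,x].
\]
The key observation — and the crux of the argument — is that for every $x \in \Aut(G)o_i$ we have $[G_x,x] = [G_i,o_i]$. This is precisely the content of the computation carried out in the earlier proposition (the one showing $|\{(H,p)\in[G_o,o] : V(H)\subseteq V(G)\}| = |\Aut(G)o|$): if $x$ lies in the orbit of $o_i$, there is an automorphism $\sigma$ of $G$ with $\sigma(o_i) = x$ (or $\sigma(x)=o_i$, depending on orbit convention), and since automorphisms are surjective isometries they carry $(G_{o_i},o_i)$ isomorphically onto $(G_x,x)$. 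Hence $f[G_x,x] = f[G_i,o_i]$ for every $x$ in the orbit, and the inner sum collapses to $|\Aut(G)o_i| \cdot f[G_i,o_i]$.

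Putting the two reorganizations together yields the desired equality. I expect the main (and only nontrivial) obstacle to be the orbit-invariance step $[G_x,x] = [G_i,o_i]$; but since the earlier proposition already did the heavy lifting of showing $(B_G(o,r),o) \cong (B_G(p,r),p)$ for vertices in the same orbit, I can simply cite it. One minor bookkeeping point worth flagging: the proposition is stated for an arbitrary real-valued $f$ on $\wh{\G}_M$ with no continuity or boundedness hypothesis, which is fine precisely because $\Psi(G)$ has finite support, so the integral is a genuine finite sum and no measurability or integrability concerns arise — I would mention this briefly so the reader sees why the full generality is harmless.
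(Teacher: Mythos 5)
Your proposal is correct and follows essentially the same route as the paper: expand the integral as a finite sum over the support, group the vertex sum by automorphism orbits, and use the orbit-invariance $[G_x,x]=[G_i,o_i]$ to collapse each orbit's contribution. You actually justify the orbit-invariance step more explicitly than the paper does, which states the regrouping identity without comment.
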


\begin{proof}
Note that
\[
\sum_{x \in V(G)} f[G_x,x] = \sum_{i=1}^k f[G_i,o_i] \cdot |\Aut(G)o_i|.
\]
Hence
\begin{align*}
\int f~d\Psi(G) &= \sum_{i=1}^k f[G_i,o_i] \cdot \Psi(G)[G_i,o_i]\\
                &= \frac{1}{|V(G)|} \sum_{i=1}^k f[G_i,o_i] \cdot |\Aut(G)o_i|\\
                &= \frac{1}{|V(G)|} \sum_{x \in V(G)} f[G_x,x],
\end{align*}
as required.
\end{proof}

As there are now two types of convergence, the theorem below is a new observation that connects them when dealing with the special case of the Dirac measure.

\begin{theo}\label{weak_conv_implies_graph_conv}
If $(\Psi(G_n))_{n=1}^\infty$ converges weakly to the Dirac measure $\delta_{[G,o]}$ for some $[G,o] \in \wh{\G}_M$, then there exists a sequence of vertices $(o_n)_{n=1}^\infty$ such that $([G_n,o_n])_{n=1}^\infty$ converges\footnote{Here $G_n$ is the connected component of $o_n$. It is not the entire graph. That is, $[G_n,o_n]$ is actually $[(G_n)_{o_n},o_n]$. As the reader can see, this modification greatly simplifies the notation.} to $[G,o]$.
\end{theo}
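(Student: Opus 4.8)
The plan is to probe the local structure of $[G,o]$ with a family of continuous ``ball-matching'' indicators, feed these into the hypothesis of weak convergence, and then select the roots $o_n$ by a diagonal argument. For each $r \in \NN$, define $f_r : \wh{\G}_M \to \RR$ by setting $f_r[H,p] = 1$ if $[B_H(p,r),p] = [B_G(o,r),o]$ and $f_r[H,p] = 0$ otherwise. This is well-defined on isomorphism classes and bounded by $1$; note that $B_G(o,r)$ is a finite rooted graph even if $G$ itself is infinite, since $G$ is connected and locally finite. I would check that $f_r$ is continuous by showing that $U_r := f_r^{-1}(\{1\})$ is clopen: whenever $\rho([H,p],[H',p']) < 1/(1+r)$, the agreement radius between $[H,p]$ and $[H',p']$ is at least $r$, so $[B_H(p,r),p] = [B_{H'}(p',r),p']$, and hence the two rooted graphs lie in $U_r$ together or not at all. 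Thus $f_r \in \mb{C}(\wh{\G}_M)$.

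Next I would apply the weak convergence $\Psi(G_n) \Rightarrow \delta_{[G,o]}$ to each $f_r$. Using the proposition that expresses $\int f\,d\Psi(G)$ as the average of $f$ over the vertices of $G$, together with the fact that $B_{(G_n)_x}(x,r) = B_{G_n}(x,r)$ for every vertex $x$, we obtain
\[
\int f_r\,d\Psi(G_n) = \frac{|\{x \in V(G_n) : [B_{G_n}(x,r),x] = [B_G(o,r),o]\}|}{|V(G_n)|} \longrightarrow f_r[G,o] = 1.
\]
In particular, for each $r$ there is an integer $N_r$ such that for all $n \ge N_r$ the graph $G_n$ has at least one vertex whose $r$-ball, rooted appropriately, is isomorphic to $[B_G(o,r),o]$; I would also arrange that $N_1 < N_2 < \cdots$.

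Finally I would define the roots: for $n < N_1$ take $o_n$ arbitrary, and for $N_r \le n < N_{r+1}$ take $o_n$ to be a vertex of $G_n$ with $[B_{G_n}(o_n,r),o_n] = [B_G(o,r),o]$. To see that $[(G_n)_{o_n},o_n] \to [G,o]$, fix $\e > 0$ and choose $r$ with $1/(1+r) < \e$; for every $n \ge N_r$, the unique $k$ with $N_k \le n < N_{k+1}$ satisfies $k \ge r$, so $o_n$ matches the $k$-ball of $[G,o]$ and therefore also its $r$-ball, giving $\rho([(G_n)_{o_n},o_n],[G,o]) \le 1/(1+r) < \e$. There is no deep obstacle here; the points requiring care are the continuity of $f_r$ (handled entirely by the ultrametric nature of $\rho$) and the bookkeeping in the diagonal choice that forces a single root $o_n$ to match balls of growing radius as $n \to \infty$.
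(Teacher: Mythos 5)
Your argument is correct, but it takes a genuinely different route from the paper's. The paper uses a single test function, namely $\rho_{[G,o]} : [G',o'] \mapsto \rho([G,o],[G',o'])$, which is continuous (indeed $1$-Lipschitz); weak convergence to $\delta_{[G,o]}$ forces $\int \rho_{[G,o]}\,d\Psi(G_n) \to 0$, i.e.\ the \emph{average} distance from the rooted connected components of $G_n$ to $[G,o]$ tends to zero, and the paper then simply takes $o_n$ to be a minimizer of $x \mapsto \rho([G,o],[(G_n)_x,x])$ over the finite set $V(G_n)$, so that the minimum is bounded by the average and the conclusion is immediate. You instead test against the whole family of indicators $f_r$ of the clopen sets $\{[H,p] : [B_H(p,r),p] = [B_G(o,r),o]\}$ and then diagonalize over $r$. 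Both are sound: your continuity argument for $f_r$ via the ultrametric is right, the identity $B_{(G_n)_x}(x,r) = B_{G_n}(x,r)$ is the correct justification for passing between the graph and its components, and the bookkeeping with $N_1 < N_2 < \cdots$ closes the argument. The paper's proof is shorter and its choice of roots is canonical, with no threshold management; your proof costs more bookkeeping but establishes a strictly stronger intermediate fact, namely that for every $r$ the \emph{proportion} of vertices $x \in V(G_n)$ with $[B_{G_n}(x,r),x] = [B_G(o,r),o]$ tends to $1$, which is the full local statistics statement rather than the mere existence of one well-placed root per graph.
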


\begin{proof}
For each positive integer $i$, choose $o_i \in V(G_i)$ such that
\[
\rho([G,o],[G_i,o_i])
\]
is minimized.

Recall that the function
\[
\rho_{[G,o]} : \wh{\G}_M \to \RR
\]
defined by
\[
\rho_{[G,o]}[G',o'] = \rho([G,o],[G',o'])
\]
for all $[G',o'] \in \wh{\G}_M$ is continuous. By assumption that $(\Psi(G_n))_{n=1}^\infty$ converges weakly to $\delta_{[G,o]}$,
\[
\lim_{n \to \infty} \int \rho_{[G,o]}~d\Psi(G_n) = \int \rho_{[G,o]}~d\delta_{[G,o]}.
\]
Note that
\[
\int \rho_{[G,o]}~d\delta_{[G,o]} = 0
\]
and
\[
\int \rho_{[G,o]}~d\Psi(G_n) = \sum_{x \in V(G_n)} \frac{\rho_{[G,o]}[(G_n)_x,x]}{|V(G_n)|}
\]
for each positive integer $n$. Hence the average value of $\rho_{[G,o]}$ converges to $0$, and so
\[
\rho([G,o],[G_n,o_n]) \to 0
\]
by definition of the sequence $(o_n)_{n=1}^\infty$.
\end{proof}

The reader may notice that the converse of this theorem is not true, but this will be demonstrated later on.

Something can be said about the convexity of the space of laws and its closure. To streamline the statements, a bit of notation is required. If $G$ and $H$ are graphs, then $G + H$ is their disjoint union. Similarly, $nG$ is a disjoint union of $n$ copies of $G$ for some nonnegative integer $n$. By convention, $0G$ is the graph with no vertices.

The next example and the proposition that follows put this new notation to good use. The measure defined by $\mu[K_1,\cdot] = 1/2$ and $\mu[K_2,\cdot] = 1/2$ yields a law. Indeed
\[
\mu = \Psi(2K_1 + K_2).
\]

\begin{prop}
If $m$ and $n$ are nonnegative integers with $(m,n) \neq (0,0)$, and $G$ and $H$ are finite graphs, then
\begin{align}\label{law_linearity}
\Psi(mG + nH) = \frac{m|V(G)|\Psi(G) + n|V(H)|\Psi(H)}{m|V(G)| + n|V(H)|}.
\end{align}
\end{prop}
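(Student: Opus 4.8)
The plan is to compare the two probability measures in \eqref{law_linearity} by integrating arbitrary real-valued functions on $\wh{\G}_M$ against each of them. Both sides are finitely supported: each law has finite support by definition, and a finite convex combination of finitely supported measures is again finitely supported. Hence it suffices to prove
\[
\int f~d\Psi(mG + nH) = \int f~d\!\left(\frac{m|V(G)|\Psi(G) + n|V(H)|\Psi(H)}{m|V(G)| + n|V(H)|}\right)
\]
for every real-valued function $f$ on $\wh{\G}_M$; specializing $f$ to the indicator of a singleton then forces the two measures to agree on every point, hence to be equal. I would first note that $mG + nH$ is a finite graph whose maximal degree is at most $M$, and that it is nonempty since $(m,n) \neq (0,0)$ and $G, H \in \G_M^0$; thus $\Psi(mG + nH)$ is defined and $m|V(G)| + n|V(H)|$ is a positive integer, so the right-hand side makes sense.

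The key combinatorial step is the bookkeeping on $mG + nH$. Its vertex set is the disjoint union of $m$ copies of $V(G)$ and $n$ copies of $V(H)$, so $|V(mG + nH)| = m|V(G)| + n|V(H)|$, and for a vertex $x$ lying in a given copy of $G$ the connected component of $x$ in $mG + nH$ is precisely the corresponding copy of $G_x$, whence $[(mG+nH)_x,x] = [G_x,x]$; the analogous statement holds for vertices in copies of $H$. Applying the earlier proposition that $\int f~d\Psi(K) = \frac{1}{|V(K)|}\sum_{x \in V(K)} f[K_x,x]$ to $K = mG + nH$, the vertex sum splits as $m\sum_{x \in V(G)} f[G_x,x] + n\sum_{x \in V(H)} f[H_x,x]$. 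Applying the same proposition in reverse to $G$ and to $H$ rewrites these inner sums as $|V(G)|\int f~d\Psi(G)$ and $|V(H)|\int f~d\Psi(H)$ respectively.

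It then remains to divide by $m|V(G)| + n|V(H)|$ and invoke linearity of the integral against a finite convex combination of measures to recognize the right-hand side of the displayed identity. That completes the argument. The only point requiring care — and the nearest thing to an obstacle — is the degenerate case $m = 0$ (or symmetrically $n = 0$): there $mG$ is the empty graph, the sum over its vertices is empty, and the coefficient $m|V(G)| = 0$ annihilates the term $m|V(G)|\Psi(G)$; one simply reads \eqref{law_linearity} with the convention that a vanishing coefficient deletes the corresponding summand, while the hypothesis $(m,n) \neq (0,0)$ guarantees the denominator stays positive. No substantive difficulty arises beyond this trivial edge case.
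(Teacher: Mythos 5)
Your proof is correct, but it takes a genuinely different route from the paper's. The paper argues pointwise on the level of measure values: for each rooted connected component $[I_q,q]$ of $mG+nH$ it computes $\Psi(mG+nH)[I_q,q]$ directly from the orbit identity $|\Aut(mG+nH)q| = m|\Aut(G)o| + n|\Aut(H)p|$ (asserted without proof there) and then checks agreement with the right-hand side, splitting into cases according to whether $[I_q,q]$ is a component of both $G$ and $H$, of $G$ only, or of $H$ only. You instead test both sides against arbitrary real-valued functions via the earlier proposition $\int f\,d\Psi(K) = \frac{1}{|V(K)|}\sum_{x\in V(K)} f[K_x,x]$, split the vertex sum over the disjoint union, and recover equality of the two finitely supported measures by specializing to indicators of singletons. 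What your route buys is uniformity: the automorphism bookkeeping is entirely delegated to the already-proven integral formula, so the orbit identity for the disjoint union and the paper's four-case analysis disappear, and the degenerate cases $m=0$ or $n=0$ are absorbed into an empty sum. What the paper's route buys is a direct, self-contained verification of the measure values that does not pass through integration at all. The one point worth making explicit in your write-up is the observation you do state, namely that $[(mG+nH)_x,x]=[G_x,x]$ for $x$ in a copy of $G$; with that in hand the rest is routine, and your argument is complete.
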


\begin{proof}
Suppose that $[I_q,q]$ is a rooted connected component of $G$ and $H$. That is, $[I_q,q] = [G_o,o] = [H_p,p]$ for some $o \in V(G)$ and $p \in V(H)$. Then
\[
\Psi(mG + nH)[I_q,q] = \frac{|\Aut(mG + nH)q|}{|V(mG + nH)|} = \frac{|\Aut(mG + nH)q|}{m|V(G)| + n|V(H)|}.
\]
The cardinality of the orbit of $q$ in $mG + nH$ may be expressed as
\[
|\Aut(mG + nH)q| = m|\Aut(G)o| + n|\Aut(H)p|.
\]
The result is
\[
\Psi(mG + nH)[I_q,q] = \frac{m|\Aut(G)o| + n|\Aut(H)p|}{m|V(G)| + n|V(H)|}.
\]
On the other hand,
\[
\frac{m|V(G)|\Psi(G)[G_o,o] + n|V(H)|\Psi(H)[H_p,p]}{m|V(G)| + n|V(H)|} = \frac{m|\Aut(G)o| + n|\Aut(H)p|}{m|V(G)| + n|V(H)|}.
\]
Hence Equation \ref{law_linearity} holds. Similar reasoning applies to the three remaining cases where $[I_q,q]$ is a rooted connected component of $G$ or $H$, but not both.
\end{proof}

As the following lemma and theorem demonstrate, the line segment between a pair of weak limits of laws lies in the closure of the space of laws.

\begin{lem}
The set of laws is rationally convex. That is, for all $G,H \in \G_M^0$ and $t \in [0,1] \cap \QQ$, the measure $t\Psi(G) + (1 - t)\Psi(H)$ is a law.
\end{lem}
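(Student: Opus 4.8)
The plan is to produce, for a given rational $t$, an explicit finite graph $F \in \G_M^0$ with $\Psi(F) = t\Psi(G) + (1-t)\Psi(H)$; once such an $F$ is exhibited, the convex combination is a law by definition. The only tool needed is Equation \ref{law_linearity}, so the whole argument reduces to choosing suitable multiplicities $m$ and $n$.

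Write $t = p/q$ with integers $0 \leq p \leq q$ and $q \geq 1$ (for instance, $t$ in lowest terms). I would set
\[
m = p\,|V(H)|, \qquad n = (q - p)\,|V(G)|.
\]
First one checks that $(m,n) \neq (0,0)$: every graph in $\G_M^0$ is nonempty, so $|V(G)|, |V(H)| \geq 1$; and since $p + (q-p) = q \geq 1$, at least one of $p$ and $q - p$ is positive, hence at least one of $m, n$ is positive. Therefore $F := mG + nH$ is a nonempty finite graph whose maximal degree is at most $M$ (disjoint union does not change vertex degrees), so $F \in \G_M^0$ and Equation \ref{law_linearity} applies to it.

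Substituting into \ref{law_linearity}, one computes $m|V(G)| = p\,|V(G)|\,|V(H)|$ and $n|V(H)| = (q-p)\,|V(G)|\,|V(H)|$, with sum $q\,|V(G)|\,|V(H)|$. Cancelling the common factor $|V(G)|\,|V(H)|$ from numerator and denominator yields
\[
\Psi(F) = \frac{p\,\Psi(G) + (q - p)\,\Psi(H)}{q} = \frac{p}{q}\,\Psi(G) + \left(1 - \frac{p}{q}\right)\Psi(H) = t\,\Psi(G) + (1 - t)\,\Psi(H),
\]
so $t\Psi(G) + (1-t)\Psi(H)$ is the law of $F$, as required.

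There is essentially no obstacle here: all the substance is already contained in Equation \ref{law_linearity}, and what remains is only the bookkeeping of picking $m$ and $n$ so the weights come out as $t$ and $1-t$. The one place to stay alert is the degenerate range $t \in \{0,1\}$, where one of $m, n$ vanishes; but the linearity formula together with the nonemptiness of $G$ and $H$ covers these cases automatically, so no separate argument is needed.
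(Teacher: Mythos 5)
Your proof is correct and follows essentially the same route as the paper: both apply Equation \ref{law_linearity} to the graph $p|V(H)|G + (q-p)|V(G)|H$ and cancel the common factor $|V(G)||V(H)|$. Your extra checks that $(m,n)\neq(0,0)$ and that the resulting graph lies in $\G_M^0$ are welcome details the paper leaves implicit.
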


\begin{proof}
Let $t = p/q$ for some integers $p$ and $q$ with $q \neq 0$. By Equation \ref{law_linearity},
\begin{align*}
\Psi(p|V(H)|G &+ (q - p)|V(G)|H)\\
              &= \frac{p|V(H)||V(G)|\Psi(G) + (q - p)|V(G)||V(H)|\Psi(H)}{p|V(H)||V(G)| + (q - p)|V(G)||V(H)|}\\
              &= \left(\frac{p}{q}\right)\Psi(G) + \left(\frac{q - p}{q}\right)\Psi(H)\\
              &= t\Psi(G) + (1 - t)\Psi(H).
\end{align*}
Hence the right-hand side is the law of the graph $p|V(H)|G + (q - p)|V(G)|H$.
\end{proof}

%
%

\begin{theo}
The closure of the set of laws is convex.
\end{theo}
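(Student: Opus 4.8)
The plan is to bootstrap the rational convexity from the previous lemma up to full convexity in two stages: first, convex combinations with a \emph{rational} coefficient, obtained as weak limits of laws; then, arbitrary real coefficients, obtained by approximating the coefficient by rationals. Throughout I will use only that weak convergence is tested against $f \in \mb{C}(\wh{\G}_M)$, that the map $\mu \mapsto \mu[f]$ is linear, and the soft fact that a sequential limit of points of a set lies in its closure.

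Fix $\mu, \nu \in \M_M^0$. First I would treat a rational $t \in [0,1]$. Since $\mu$ and $\nu$ are weak limits of laws, choose finite graphs $G_n, H_n \in \G_M^0$ with $\Psi(G_n) \Rightarrow \mu$ and $\Psi(H_n) \Rightarrow \nu$. By the lemma, each $\lambda_n := t\,\Psi(G_n) + (1 - t)\,\Psi(H_n)$ is a law, and for every $f \in \mb{C}(\wh{\G}_M)$,
\[
\lambda_n[f] = t\,\Psi(G_n)[f] + (1 - t)\,\Psi(H_n)[f] \longrightarrow t\,\mu[f] + (1 - t)\,\nu[f] = \bigl(t\mu + (1-t)\nu\bigr)[f].
\]
Thus $\lambda_n \Rightarrow t\mu + (1-t)\nu$, and since each $\lambda_n$ is a law, $t\mu + (1-t)\nu \in \M_M^0$.

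For a general $t \in [0,1]$, I would pick rationals $t_k \in [0,1]$ with $t_k \to t$; by the previous step each $\mu_k := t_k\mu + (1 - t_k)\nu$ lies in $\M_M^0$, and $\mu_k[f] = t_k\mu[f] + (1-t_k)\nu[f] \to t\mu[f] + (1-t)\nu[f]$ for every $f$, so $\mu_k \Rightarrow t\mu + (1-t)\nu$. As $\M_M^0$ is closed, the limit belongs to $\M_M^0$; it is genuinely a probability measure, being a convex combination of two. This gives convexity of $\M_M^0$.

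I do not expect a serious obstacle: all the content is in the lemma, and the remainder is the two limiting arguments plus linearity of $\mu \mapsto \mu[f]$. The one point to watch is topological — one should not invoke metrizability of the weak topology, only that sequential limits lie in the closure, which is all the above uses. If one prefers to avoid sequences, the same proof runs directly against a basic weak neighborhood $\{\lambda : |\lambda[f_i] - (t\mu + (1-t)\nu)[f_i]| < \e,\ 1 \le i \le N\}$: first choose a rational $s$ with $2|s - t|\max_i \|f_i\|_\infty < \e/3$, then laws $\Psi(G)$ and $\Psi(H)$ within $\e/3$ of $\mu$ and $\nu$ on each $f_i$, and observe that the law $s\Psi(G) + (1 - s)\Psi(H)$ (a law by the lemma) lies inside the neighborhood.
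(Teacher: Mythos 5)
Your proposal is correct, and it rests on exactly the same ingredients as the paper's proof: the rational convexity lemma, density of $\QQ$ in $\RR$, and linearity of $\mu \mapsto \mu[f]$. The difference is purely organizational. The paper runs a single diagonal argument: it takes one sequence of laws $a_n\Psi(G_n) + (1-a_n)\Psi(H_n)$ with $a_n \in \QQ$, $a_n \to a$, and verifies directly by a triangle-inequality estimate (using boundedness of $f$ to control the term $|\Psi(G_n)[f]|\,|a_n - a|$) that this sequence converges weakly to $a\mu_G + (1-a)\mu_H$, exhibiting the convex combination as a limit of laws. You instead split the limit into two successive stages: first rational coefficients applied to the limit measures (where the estimate is trivial because $t$ is fixed), then real coefficients via closedness of $\M_M^0$. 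Your version trades the paper's one explicit $\e$-estimate for two soft limit arguments plus the general topological facts that sequential limits of points of a set lie in its closure and that a closure is closed under sequential limits; you are right that no metrizability is needed for either. Both proofs are complete; the paper's has the minor advantage of directly producing a single sequence of laws converging to the target, while yours isolates more cleanly where each approximation enters.
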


\begin{proof}
Let $\mu_G$ and $\mu_H$ be limits of laws: $\Psi(G_n) \Rightarrow \mu_G$ and $\Psi(H_n) \Rightarrow \mu_H$. Given a real number $a$, there exists a sequence $a_n$ of rational numbers such that $a_n \to a$.

Suppose that $f$ is a continuous function on $\wh{\G}_M$. Note that
\[
\int f~d(a_n\Psi(G_n) + (1 - a_n)\Psi(H_n)) = a_n\int f~d\Psi(G_n) + (1 - a_n)\int f~d\Psi(H_n)
\]
for all positive integers $n$, and
\[
\int f~d(a\mu_G + (1 - a)\mu_H) = a\int f~d\mu_G + (1 - a)\int f~d\mu_H.
\]
Then
\begin{align*}
\Bigg|\bigg(a_n\int f ~d\Psi(G_n) &+ (1 - a_n)\int f ~d\Psi(H_n)\bigg) - \bigg(a\int f ~d\mu_G + (1 - a)\int f ~d\mu_H\bigg)\Bigg|\\
                                  &\leq |a_n\Psi(G_n)[f] - a\mu_G[f]| + |(1 - a_n)\Psi(H_n)[f] - (1 - a)\mu_H[f]|.
\end{align*}
The greater term is at most
\begin{align*}
|\Psi(G_n)[f]||a_n - a| &+ |a||\Psi(G_n)[f] - \mu_G[f]|\\
                        &+ |a_n - a||\Psi(H_n)[f]| + |1 - a||\Psi(H_n)[f] - \mu_H[f]|,
\end{align*}
which tends to zero. Hence
\[
\lim_{n \to \infty} \int f ~d(a_n\Psi(G_n) + (1 - a_n)\Psi(H_n)) = \int f ~d(a\mu_G + (1 - a)\mu_H),
\]
and so $a\mu_G + (1 - a)\mu_H$ is the weak limit of a sequence of laws.
\end{proof}

\section{Unimodularity}
Having defined the law of a finite graph, the reader may be curious to see why these maps are useful.

\begin{defn}
A measure $\mu$ on $\wh{\G}_M$ is \emph{unimodular} if
\[
\int \sum_{x \in V(G)} f[G,x,o]~d\mu[G,o] = \int \sum_{x \in V(G)} f[G,o,x]~d\mu[G,o]
\]
for all nonnegative functions $f$ whose domain is the set of birooted connected graphs. Such a measure is also said to satisfy the \emph{intrinsic Mass Transport Principle}, the iMTP. The set of these measures is denoted by $\U$.
\end{defn}

The following fact appears in a paper by Schramm \cite{hgl}, but its proof is omitted.

\begin{prop}\label{laws_are_unimodular}
Every law is unimodular.
\end{prop}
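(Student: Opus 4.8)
The plan is to verify the iMTP directly for $\mu = \Psi(G)$ where $G \in \G_M^0$ is an arbitrary finite graph, by unwinding both sides of the defining equation using the formula from the proposition on integration against $\Psi(G)$ (the one stating $\int h~d\Psi(G) = \frac{1}{|V(G)|}\sum_{v \in V(G)} h[G_v,v]$). First I would fix a nonnegative function $f$ on birooted connected graphs and define, for each ordered pair $(v,w)$ of vertices of $G$ lying in a common connected component, the quantity $f[G_v, v, w]$ (interpreting $G_v$ as the connected component containing $v$). The left-hand integrand $\sum_{x \in V(G_o)} f[G_o, x, o]$, when integrated against $\Psi(G)$, should expand via that proposition to
\[
\frac{1}{|V(G)|} \sum_{o \in V(G)} \sum_{x \in V(G_o)} f[G_o, x, o],
\]
and the right-hand side similarly to $\frac{1}{|V(G)|} \sum_{o \in V(G)} \sum_{x \in V(G_o)} f[G_o, o, x]$.

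The key step is then to observe that both double sums range over exactly the same index set, namely all ordered pairs $(o,x)$ of vertices belonging to the same connected component of $G$ (since $x \in V(G_o)$ is symmetric: $x$ lies in $o$'s component iff $o$ lies in $x$'s component, and in that case $G_o = G_x$ as the same component). So the two sums differ only by the relabeling $(o,x) \mapsto (x,o)$, which is a bijection of the index set; hence they are equal term-for-term after reindexing. One subtlety to handle carefully: the proposition on integration against $\Psi(G)$ is stated for functions on $\wh{\G}_M$, whereas here the integrand is $[G,o] \mapsto \sum_{x \in V(G_o)} f[G_o,x,o]$, which I should first check is a well-defined function on isomorphism classes of rooted graphs — i.e. invariant under rooted isomorphism. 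This follows because any rooted isomorphism $(G_o, o) \to (G'_{o'}, o')$ induces a bijection $V(G_o) \to V(G'_{o'})$ carrying $x$ to some $x'$ with $(G_o, x, o) \cong (G'_{o'}, x', o')$, so the summand is preserved and the total sum is unchanged; the values $f[\cdot,\cdot,\cdot]$ depend only on birooted isomorphism classes by the standing convention that we identify graphs with their isomorphism classes.

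I expect the main obstacle to be purely bookkeeping rather than conceptual: making the identification of the two index sets airtight, in particular confirming that "$x \in V(G_o)$" versus "$o \in V(G_x)$" genuinely describe the same set of ordered pairs and that on such a pair the rooted/birooted graphs appearing on the two sides are literally $[G_o, x, o]$ and $[G_o, o, x]$ with $G_o = G_x$ the one common component — so that the swap $(o,x)\mapsto(x,o)$ is exactly the involution witnessing equality. Once the two expanded double sums are seen to be related by this relabeling, the proposition follows immediately, with no inequalities or limiting arguments needed. I would close by remarking that this is the finite, "intrinsic" shadow of the classical Mass Transport Principle, and that it is precisely the uniform measure on $V(G)$ (hidden inside $\Psi(G)$) that makes the in-flow equal the out-flow.
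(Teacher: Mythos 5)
Your proposal is correct, and its computational core --- expand both integrals via the proposition $\int h\,d\Psi(G) = \frac{1}{|V(G)|}\sum_{v \in V(G)} h[G_v,v]$ and then observe that the two double sums are exchanged by the involution $(o,x)\mapsto(x,o)$ of the index set of ordered pairs lying in a common component --- is exactly the paper's argument for \emph{connected} $G$. Where you differ is in how disconnectedness is handled: the paper first proves the connected case, then writes a graph with two components as $H+I$, invokes Equation~\ref{law_linearity} to express $\Psi(H+I)$ as a convex combination of $\Psi(H)$ and $\Psi(I)$, and finishes by induction on the number of components together with linearity of the integral; you instead absorb all components into a single double sum over $\{(o,x) : x \in V(G_o)\}$ and note that this index set is symmetric under the swap, with $G_o = G_x$ on each pair. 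Your route is more direct --- it needs neither the induction nor the linearity proposition --- at the cost of the small bookkeeping you correctly flag (checking that $[G,o]\mapsto \sum_{x\in V(G_o)} f[G_o,x,o]$ is constant on isomorphism classes, a point the paper passes over silently). Both arguments are complete; the paper's modular version reuses machinery already established, while yours is self-contained and arguably cleaner. One harmless remark: since $\Psi(G)$ is supported on the finitely many rooted connected components of the finite graph $G$, all sums involved are finite, so no convergence issues arise in either version.
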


\begin{proof}
Suppose that $G$ is a finite graph. Denote by $\omega$ the number of connected components of $G$. If $\omega = 1$, then
\begin{align*}
\int \sum_{x \in V(G)} f[G,x,o]~d\Psi(G)[G,o] &= \frac{1}{|V(G)|} \sum_{y \in V(G)} \sum_{x \in V(G)} f[G,x,y]\\
                                              &= \frac{1}{|V(G)|} \sum_{x \in V(G)} \sum_{y \in V(G)} f[G,x,y]\\
                                              &= \int \sum_{y \in V(G)} f[G,o,y]~d\Psi(G)[G,o]
\end{align*}
for all rooted connected components $[G,o]$ of $G$. Hence the result holds for connected graphs. Suppose that $\omega = 2$. Write $G = H + I$ where $H$ and $I$ are its connected components. By Equation \ref{law_linearity},
\[
\Psi(G) = \frac{|V(H)|\Psi(H) + |V(I)|\Psi(I)}{|V(G)|}.
\]
Since $\Psi(H)$ and $\Psi(I)$ are unimodular and the integral is linear, it follows that $\Psi(G)$ is unimodular. The general statement holds by induction on $\omega$.
\end{proof}

Aldous and Lyons \cite{pourn}, and Schramm \cite{hgl} conjectured that every unimodular measure is the weak limit of a sequence of laws, but this remains open.

\section{Sustained Probability Measures}
Although laws are interesting objects, their domain consists only of finite graphs. This section attempts to view laws in a slightly more general setting.

\begin{defn}
A probability measure $\mu$ on $\wh{\G}_M$ is \emph{sustained} by a graph $G$ if the support of $\mu$ is a subset of rooted connected components of $G$. It is \emph{strictly} sustained by $G$ if every rooted connected component has a positive measure.
\end{defn}

\begin{lem}\label{imtp_sustained_implies_strictly_sustained}
If a unimodular measure $\mu \in \U$ is sustained by a finite connected graph $G$, then $\mu$ is strictly sustained by $G$.
\end{lem}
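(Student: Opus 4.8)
The plan is to derive strict sustenance from a single application of the intrinsic Mass Transport Principle, with the connectedness of $G$ doing the remaining work. To set up, note that since $G$ is finite and connected, its rooted connected components are precisely the rooted graphs $[G,v]$, $v\in V(G)$, and there are only finitely many isomorphism classes among them. Because $\mu$ is sustained by $G$ it is concentrated on this finite set of points, so $\mu=\sum_{[G,v]}\mu[G,v]\,\delta_{[G,v]}$ and $\sum_{[G,v]}\mu[G,v]=\mu(\wh{\G}_M)=1$, the sums being over the distinct rooted connected components; in particular some $[G,v]$ has positive measure. It then suffices to prove the local statement: \emph{if $u,w\in V(G)$ are adjacent and $\mu[G,u]>0$, then $\mu[G,w]>0$}. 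Granting this, the set $\{v\in V(G):\mu[G,v]>0\}$ is nonempty and closed under passing to a neighbour, hence is all of $V(G)$ because $G$ is connected, which is exactly the assertion that $\mu$ is strictly sustained by $G$.

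For the local statement, fix adjacent $u,w$ with $\mu[G,u]>0$ and suppose toward a contradiction that $\mu[G,w]=0$. Feed the iMTP the nonnegative function on birooted connected graphs given by $f[H,x,y]=1$ if $(H,x)\cong(G,w)$ and $x$ is adjacent to $y$ in $H$, and $f[H,x,y]=0$ otherwise; this descends to isomorphism classes since both conditions are isomorphism invariants, and $\sum_{x\in V(G)}f[G,x,v]$ depends only on the class $[G,v]$ because automorphisms of $G$ permute the summands. Since $\mu$ lives on the classes $[G,o]$, the outgoing side of the iMTP is
\[
\int\sum_{x\in V(G)}f[G,o,x]\,d\mu[G,o]=\sum_{[G,v]}\mathbf{1}\big[(G,v)\cong(G,w)\big]\,\deg_G(v)\,\mu[G,v]=\deg_G(w)\,\mu[G,w]=0,
\]
where we used that $(G,v)\cong(G,w)$ forces $\deg_G(v)=\deg_G(w)$ and $\mu[G,v]=\mu[G,w]$. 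The incoming side $\int\sum_{x\in V(G)}f[G,x,o]\,d\mu[G,o]$ is a sum of nonnegative terms whose $[G,u]$-term equals $\big(\sum_{x\in V(G)}f[G,x,u]\big)\mu[G,u]\ge\mu[G,u]$, because $x=w$ alone contributes $1$ (indeed $(G,w)\cong(G,w)$ and $w$ is adjacent to $u$). So the incoming side is at least $\mu[G,u]>0$, contradicting its equality with the outgoing side.

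The one genuinely load-bearing hypothesis is the connectedness of $G$: it is precisely what propagates positivity from a single component-class to all of them, and it is exactly what fails for a disconnected graph, where a unimodular measure may simply omit a component. Everything else is bookkeeping --- verifying that $f$ is well defined on isomorphism classes of birooted graphs and that the two sides of the iMTP are the finite sums written above --- and no approximation or compactness argument is required; the proof is a single mass-transport identity evaluated at a well-chosen test function.
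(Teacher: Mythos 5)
Your proof is correct, but it follows a genuinely different route from the paper's. The paper makes a single global application of the iMTP with the test function $f[H,a,b]=\mathbf{1}\bigl[[H,a]=[G,j]\bigr]$, where $[G,j]$ is a hypothetical null class: every vertex of $G$ then sends unit mass to each vertex in the orbit of $j$, so each class receives $|\Aut(G)j|\geq 1$ in total while only the null class sends, forcing $|\Aut(G)j| = |V(G)|\,p_j = 0$ at once --- no propagation argument is needed, and connectedness enters only implicitly (it is what makes $[G,x,o]$ a birooted \emph{connected} graph for every pair $x,o\in V(G)$). You instead transport mass only along edges, prove the local statement that positivity of $\mu[G,u]$ forces positivity of $\mu[G,w]$ for each neighbour $w$ of $u$, and then invoke connectedness explicitly to propagate positivity from one class (which exists since the total mass is $1$) to all of $V(G)$. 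The paper's argument is shorter and purely combinatorial in the orbit counts; yours isolates cleanly where connectedness is load-bearing and is the more local, edge-by-edge form of the mass-transport argument, which is the version that tends to survive in settings where a global sum over all of $V(G)$ against a fixed orbit is less convenient. Your verifications that $f$ descends to isomorphism classes and that $\sum_{x\in V(G)}f[G,x,v]$ depends only on $[G,v]$ are the right things to check and are done correctly.
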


\begin{proof}
Let $\{[G,i] ~:~ 1 \leq i \leq k\}$ be the set of rooted connected components of $G$, and let $\mu[G,i] = p_i$ for all $i \in \{1,\ldots,k\}$. Suppose that $p_j = 0$ for some $j \in \{1,\ldots,k\}$. Consider the function
\[
f[H,a,b] =
\begin{cases}
1 & \text{ if $[H,a] = [G,j]$,}\\
0 & \text{ otherwise,}
\end{cases}
\]
which is well-defined. Observe that
\[
\sum_{x \in V(G)} f[G,x,i] = |\Aut(G)j|
\]
for all $i \in \{1,\ldots,k\}$. Since $\mu$ is unimodular,
\begin{align*}
|\Aut(G)j| &= \sum_{i=1}^k p_i |\Aut(G)j|\\
           &= \sum_{i=1}^k \sum_{x \in V(G)} f[G,x,i] p_i\\
           &= \int \sum_{x \in V(G)} f[G,x,o]~d\mu[G,o]\\
           &= \int \sum_{x \in V(G)} f[G,o,x]~d\mu[G,o]\\
           &= \sum_{i=1}^k \sum_{x \in V(G)} f[G,i,x] p_i\\
           &= 0,
\end{align*}
which is a contradiction.
\end{proof}

\begin{lem}\label{arithmetic_harmonic_means}
Let $\{a_1,\ldots,a_n\}$ be a set of positive real numbers. If $\sum_{i=1}^n a_i = 1$ and $\sum_{i=1}^n a_i^{-1} = n^2$, then $a_i = n^{-1}$ for all $i \in \{1,\ldots,n\}$.
\end{lem}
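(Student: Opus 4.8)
The plan is to reduce the statement to the equality case of the Cauchy--Schwarz inequality (equivalently, the arithmetic--harmonic mean inequality). Applying Cauchy--Schwarz to the vectors $\left(\sqrt{a_1}, \ldots, \sqrt{a_n}\right)$ and $\left(1/\sqrt{a_1}, \ldots, 1/\sqrt{a_n}\right)$ gives
\[
n^2 = \left(\sum_{i=1}^n 1\right)^2 \leq \left(\sum_{i=1}^n a_i\right)\left(\sum_{i=1}^n a_i^{-1}\right),
\]
and the two hypotheses force the right-hand side to equal $1 \cdot n^2 = n^2$. Hence equality holds in Cauchy--Schwarz, which occurs only when the two vectors are proportional, i.e.\ when $a_i$ does not depend on $i$. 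The common value is then forced to be $1/n$ since the $a_i$ sum to $1$.

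Alternatively --- and this avoids having to quote the equality condition --- I would argue by a direct computation with a sum of squares. Using $\sum_{i=1}^n a_i = 1$ and $\sum_{i=1}^n a_i^{-1} = n^2$ one finds
\[
\sum_{i=1}^n \frac{(a_i - 1/n)^2}{a_i} = \sum_{i=1}^n a_i - \frac{2}{n}\sum_{i=1}^n 1 + \frac{1}{n^2}\sum_{i=1}^n \frac{1}{a_i} = 1 - 2 + 1 = 0.
\]
Since the $a_i$ are positive, each summand is nonnegative, so every summand must vanish; therefore $a_i = 1/n$ for all $i \in \{1,\ldots,n\}$. I would most likely present this second, self-contained version as the main proof and remark that it is exactly the equality case of the AM--HM inequality in disguise.

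There is no serious obstacle here. The only points needing care are that positivity of the $a_i$ is genuinely used (so that the divisions are legitimate and the squared terms are truly nonnegative), and, in the first approach, that the equality condition for Cauchy--Schwarz be stated correctly and combined with $b_i c_i = 1$ to pin down the proportionality constant as the common value of the $a_i$.
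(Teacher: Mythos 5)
Your proof is correct. The paper's own argument is your first approach in different clothing: it observes that the hypotheses say the arithmetic mean and the harmonic mean of the $a_i$ are both $n^{-1}$, and then simply quotes as ``known'' the fact that these means coincide only when $a_1 = \cdots = a_n$. Your Cauchy--Schwarz version is the same reduction to the equality case of a classical inequality, so on that front you match the paper. Where you go further is the second, sum-of-squares computation
\[
\sum_{i=1}^n \frac{(a_i - 1/n)^2}{a_i} = 1 - 2 + 1 = 0,
\]
which I have checked and which is exactly right: it replaces the citation of the AM--HM equality condition with a two-line self-contained verification, using positivity of the $a_i$ precisely where it is needed (to divide by $a_i$ and to conclude each nonnegative summand vanishes). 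That version is strictly more complete than what the paper prints, since the paper leaves the equality case as an appeal to a standard fact; presenting the identity as the main proof, as you propose, is the better choice.
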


\begin{proof}
Recall that
\[
\frac{\sum_{i=1}^n a_i}{n}
\]
is the arithmetic mean of the given set, and
\[
\frac{n}{\sum_{i=1}^n a_i^{-1}}
\]
is its harmonic mean. In this case, both means are equal to $n^{-1}$. It is known that the arithmetic and harmonic means coincide if and only if
\[
a_1 = \cdots = a_n,
\]
and so $a_i = n^{-1}$ for all $i \in \{1,\ldots,n\}$.
\end{proof}

\begin{theo}\label{law_characterization}
If $\mu \in \M_M$ is sustained by a finite connected graph $G$ and satisfies the iMTP, then $\mu$ is the law of $G$.
\end{theo}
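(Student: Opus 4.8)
The plan is to use the intrinsic Mass Transport Principle with one carefully chosen test function to pin down the weights that $\mu$ assigns to the rooted connected components of $G$, and then to invoke Lemma~\ref{arithmetic_harmonic_means}. Let $[G,1],\dots,[G,k]$ enumerate the rooted connected components of the finite connected graph $G$, write $n=|V(G)|$, and let $O_i=\Aut(G)i$, so that $O_1,\dots,O_k$ partition $V(G)$ and by definition $\Psi(G)[G,i]=|O_i|/n$. Since $\mu$ is sustained by $G$ it is a probability measure supported on $\{[G,i]:1\le i\le k\}$; put $p_i=\mu[G,i]$, so $\sum_i p_i=1$. By Lemma~\ref{imtp_sustained_implies_strictly_sustained} each $p_i>0$, which is exactly what makes the quantities $1/p_i$ legitimate in the next step. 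The goal is to show $p_i=|O_i|/n$ for every $i$, since then $\mu=\Psi(G)$.

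The creative step is the choice of test function: I would apply the iMTP to the nonnegative function on birooted connected graphs
\[
f[H,a,b]=\sum_{i=1}^{k}\frac{|O_i|}{p_i}\,\mathbf{1}\big([H,a]=[G,i]\big),
\]
which is well defined on isomorphism classes because the class of $[H,a]$ is determined by that of $[H,a,b]$. Using that $G$ is connected (so that the component of every vertex is $G$ itself and $x$ ranges over $V(G)$ exactly once), a direct computation gives
\[
\sum_{x\in V(G)}f[G,x,o]=\sum_{i=1}^{k}\frac{|O_i|^{2}}{p_i}
\]
independently of the root $o$, while $\sum_{x\in V(G)}f[G,o,x]=n\,|O_j|/p_j$ when $o\in O_j$. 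Integrating both identities against $\mu$ and using $\sum_i p_i=1$ and $\sum_i|O_i|=n$, the iMTP collapses to the single equation
\[
\sum_{i=1}^{k}\frac{|O_i|^{2}}{p_i}=n^{2}.
\]

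To conclude, I would reindex by vertices rather than components: for $v\in V(G)$ let $a_v=p_i/|O_i|$, where $i$ is the index with $v\in O_i$. This produces $n$ positive reals with $\sum_{v\in V(G)}a_v=\sum_{i=1}^{k}|O_i|\cdot\frac{p_i}{|O_i|}=1$ and, by the identity just established, $\sum_{v\in V(G)}a_v^{-1}=\sum_{i=1}^{k}|O_i|\cdot\frac{|O_i|}{p_i}=n^{2}$. Lemma~\ref{arithmetic_harmonic_means}, applied with $n=|V(G)|$ terms, forces $a_v=1/n$ for all $v$, i.e.\ $p_i=|O_i|/n=\Psi(G)[G,i]$. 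Since $\mu$ and $\Psi(G)$ have the same finite support and agree there, $\mu=\Psi(G)$.

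I expect the only real difficulty to be guessing that ``transport $|O_i|/p_i$ units out of every vertex of class $i$'' is the right rule, and then carrying out the two inner sums without error; once the identity $\sum_i|O_i|^2/p_i=n^2$ is in hand, Lemmas~\ref{imtp_sustained_implies_strictly_sustained} and~\ref{arithmetic_harmonic_means} finish the job mechanically. As a cross-check, there is also a shortcut that sidesteps Lemma~\ref{arithmetic_harmonic_means}: applying the iMTP to $f[H,a,b]=g([H,a])$ for an arbitrary nonnegative $g$ yields $\sum_i|O_i|\,g([G,i])=n\sum_i p_i\,g([G,i])$, and taking $g$ to be the indicator of $[G,\ell]$ gives $p_\ell=|O_\ell|/n$ immediately.
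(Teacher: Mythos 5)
Your proof is correct. The skeleton matches the paper's --- use the iMTP with a hand-crafted test function to derive $\sum_{i}|O_i|^2/p_i = |V(G)|^2$, then feed the vertex-indexed quantities $p_i/|O_i|$ into Lemma~\ref{arithmetic_harmonic_means} --- but your execution is leaner. The paper needs two test functions: first $f[G,x,y]=p_x/|[G,x]|$ to establish $\sum_i p_i^2/|[G,i]|=1/|V(G)|$, and then $h[G,x,y]=p_x|[G,y]|/(p_y|[G,x]|)$, whose simplification uses that first identity, to arrive at $\sum_{x\in V(G)}|[G,x]|/p_x=|V(G)|^2$. Your single function $|O_i|/p_i$ produces the same key identity in one pass. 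You are also more careful than the paper on one point: both your test function and the paper's $h$ divide by the $p_i$'s, and you explicitly invoke Lemma~\ref{imtp_sustained_implies_strictly_sustained} to guarantee $p_i>0$, a justification the paper omits. Finally, your closing ``cross-check'' --- applying the iMTP to $f[H,a,b]=g([H,a])$ with $g$ the indicator of $[G,\ell]$ to get $|O_\ell|=|V(G)|\,p_\ell$ directly --- is in fact a complete and strictly simpler proof of the theorem: it needs neither the positivity lemma nor the AM--HM lemma, and it is essentially the computation already present in the paper's proof of Lemma~\ref{imtp_sustained_implies_strictly_sustained}, run for an arbitrary $\ell$ rather than only for one with $p_\ell=0$.
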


\begin{proof}
Let $\{[G,i] ~:~ 1 \leq i \leq k\}$ be the set of rooted connected components of $G$, and let $\mu[G,i] = p_i$ for all $i \in \{1,\ldots,k\}$. The functions
\[
f : [G,x,y] \mapsto \frac{p_x}{|[G,x]|}
\]
and
\[
h : [G,x,y] \mapsto \frac{p_x|[G,y]|}{p_y|[G,x]|}
\]
are nonnegative and measurable. Since $\mu$ satisfies the iMTP,
\begin{align*}
1 &= \left(\sum_{i=1}^k p_i\right)\left(\sum_{j=1}^k p_j\right)\\
  &= \sum_{i=1}^k \sum_{x \in V(G)} \left(\frac{p_x}{|[G,x]|}\right) p_i\\
  &= \int \sum_{x \in V(G)} f[G,x,o]~d\mu\\
  &= \int \sum_{x \in V(G)} f[G,o,x]~d\mu\\
  &= \sum_{i=1}^k \sum_{x \in V(G)} \frac{p_i^2}{|[G,i]|}\\
  &= |V(G)|\sum_{i=1}^k \frac{p_i^2}{|[G,i]|},
\end{align*}
and so
\[
\sum_{i=1}^k \frac{p_i^2}{|[G,i]|} = \frac{1}{|V(G)|}.
\]
Using this equation and similar reasoning,
\begin{align*}
               &\int \sum_{x \in V(G)} h[G,x,o]~d\mu = \int \sum_{x \in V(G)} h[G,o,x]~d\mu\\
\Rightarrow ~~ &\sum_{i=1}^k \sum_{x \in V(G)} h[G,x,i]p_i = \sum_{i=1}^k \sum_{x \in V(G)} h[G,i,x]p_i\\
\Rightarrow ~~ &\sum_{i=1}^k \sum_{x \in V(G)} \frac{p_x|[G,i]|}{|[G,x]|} = \sum_{i=1}^k \sum_{x \in V(G)} \frac{p_i^2|[G,x]|}{p_x|[G,i]|}\\
\Rightarrow ~~ &\left(\sum_{i=1}^k |[G,i]|\right)\left(\sum_{x \in V(G)} \frac{p_x}{|[G,x]|}\right) = \left(\sum_{i=1}^k \frac{p_i^2}{|[G,i]|}\right)\left(\sum_{x \in V(G)} \frac{|[G,x]|}{p_x}\right)\\
\Rightarrow ~~ &|V(G)| = \frac{1}{|V(G)|} \sum_{x \in V(G)} \frac{|[G,x]|}{p_x},
\end{align*}
which means
\[
\sum_{x \in V(G)} \frac{|[G,x]|}{p_x} = |V(G)|^2.
\]
By applying Lemma \ref{arithmetic_harmonic_means} on the set
\[
\left\{\frac{p_x}{|[G,x]|} ~:~ x \in V(G)\right\},
\]
it follows that
\[
p_x = \frac{|[G,x]|}{|V(G)|}
\]
for all $x \in V(G)$, as required.
\end{proof}

As stated in Proposition \ref{laws_are_unimodular}, laws are unimodular. Since laws themselves are sustained probability measures, the converse of Theorem \ref{law_characterization} is also true. Hence this theorem is a new characterization of laws of finite connected graphs. Aldous and Lyons \cite{pourn} briefly mention this characterization without proof.

A silly but instructive application of Lemma \ref{imtp_sustained_implies_strictly_sustained} is the following proposition.

\begin{prop}
If the Dirac measure $\delta_{[G,o]}$ is unimodular for some $[G,o] \in \wh{\G}_M$, then $G$ is vertex-transitive.
\end{prop}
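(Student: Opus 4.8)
The plan is to test the intrinsic Mass Transport Principle against $\mu=\delta_{[G,o]}$, where it collapses to a purely combinatorial identity about $G$ alone. Since integration against a Dirac measure is evaluation at its atom, unimodularity of $\delta_{[G,o]}$ says precisely that
\[
\sum_{x\in V(G)}f[G,x,o]=\sum_{x\in V(G)}f[G,o,x]
\]
for every nonnegative $f$ on birooted connected graphs. The target is to show $[G,x]=[G,o]$ for all $x\in V(G)$; since similarity of vertices is symmetric and transitive (automorphisms compose and invert), this gives that any two vertices of $G$ are similar, i.e. that $G$ is vertex-transitive.

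First I would fix an arbitrary $v\in V(G)$ and plug into the identity the indicator function $f[H,a,b]=1$ if $[H,a]=[G,v]$ and $f[H,a,b]=0$ otherwise --- the same flavour of test function used in the proof of Lemma \ref{imtp_sustained_implies_strictly_sustained}, and manifestly well-defined on isomorphism classes and nonnegative. The left-hand side then counts the vertices $x$ with $[G,x]=[G,v]$, which equals $|\Aut(G)v|\ge 1$ because $v\in\Aut(G)v$. On the right-hand side $f[G,o,x]$ does not depend on $x$, so that sum is $|V(G)|$ if $[G,o]=[G,v]$ and $0$ otherwise. If $[G,o]\ne[G,v]$ the identity would force $|\Aut(G)v|=0$, a contradiction; hence $[G,o]=[G,v]$, and letting $v$ range over $V(G)$ finishes the argument. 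When $G$ is finite this is exactly Lemma \ref{imtp_sustained_implies_strictly_sustained} in disguise: $\delta_{[G,o]}$ is trivially sustained by the finite connected graph $G$, hence strictly sustained, so $[G,o]$ must be its only rooted connected component, which again means every vertex is similar to $o$.

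The argument is short and presents no genuine difficulty; the points needing a little care are checking that the chosen $f$ descends to a function of the birooted isomorphism class (it does, since $[H,a]$ is determined by $[H,a,b]$) and the bookkeeping when $G$ is infinite, where both sides of the identity may equal $\infty$ --- but the case split survives verbatim because $\infty\ne 0$ and $\Aut(G)v\subseteq V(G)$.
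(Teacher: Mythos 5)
Your argument is correct, and at its core it is the same argument as the paper's: the paper derives the proposition by citing Lemma \ref{imtp_sustained_implies_strictly_sustained}, whose own proof uses exactly the indicator test function $f[H,a,b]=\mathbf{1}\{[H,a]=[G,v]\}$ that you plug in directly. The one substantive difference is that you inline the computation rather than quoting the lemma, and this actually buys you something: Lemma \ref{imtp_sustained_implies_strictly_sustained} is stated only for \emph{finite} connected graphs, whereas the proposition quantifies over all $[G,o]\in\wh{\G}_M$, so the paper's appeal to that lemma does not literally cover infinite $G$. Your version does, and you correctly flag the only point where infiniteness could intrude (sums taking the value $\infty$), observing that the contradiction $|\Aut(G)v|\geq 1$ versus $0$ survives regardless. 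So your write-up is, if anything, slightly more complete than the paper's.
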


\begin{proof}
Suppose that $p \notin \Aut(G)o$ for some $p \in V(G)$. Then $[G,o] \neq [G,p]$, and so $\delta_{[G,o]}[G,p] = 0$. By the assumption that $\delta_{[G,o]}$ is unimodular and Lemma \ref{imtp_sustained_implies_strictly_sustained}, this Dirac measure is strictly sustained by $G$; a contradiction.
\end{proof}

\section{The Space of Paths}
This section will cover the case of $M = 2$. Specifically, we will concentrate on a special subspace of $\wh{\G}_2$. Even though this is only a small portion of the general theory, the insight we gain will be helpful in understanding $\wh{\G}_M$.

\begin{defn}
A \emph{finite rooted path} is an element of the set
\[
\left\{[P_k,o] \in \wh{\G}_2 ~:~ k \in \NN^\ast\right\}
\]
where $P_k$ is a path of length $k - 1$ with $k$ vertices. These may also be denoted by $[P(u,v),o]$ where $u$ and $v$ are the ends of the path $P$.

Consider the graph whose vertex set is $\ZZ$, and in which consecutive integers are adjacent. Such a graph is known as the \emph{bi-infinite} path $P_\infty$.

An initial segment $[N,\infty) \cap \ZZ$ where $N$ is an integer induces the rooted graph $[P(N,\infty),0]$, which is called a \emph{semi-infinite} rooted path.

Note that $P_\infty$ is vertex-transitive, meaning it has exactly one rooted connected component, which we will denote by $[P_\infty,\cdot]$. An instance of each of these rooted graphs is shown in Figure \ref{fig:types_of_paths}.
\end{defn}

\begin{figure}
\begin{center}
\begin{tikzpicture}[scale=0.75]
\GraphInit[vstyle=Classic]
\Vertex[NoLabel,x=0,y=4,style={line width=1pt,fill=white,minimum size=5pt}]{A}
\Vertex[Lpos=90,x=2,y=4,style={line width=1pt,fill=black,minimum size=5pt}]{$u_2$}
\Vertex[NoLabel,x=4,y=4,style={line width=1pt,fill=white,minimum size=5pt}]{C}
\Vertex[NoLabel,x=6,y=4,style={line width=1pt,fill=white,minimum size=5pt}]{D}
\Vertex[NoLabel,x=8,y=4,style={line width=1pt,fill=white,minimum size=5pt}]{E}
\Edges(A,$u_2$,C,D,E)
\Vertex[Lpos=90,x=0,y=2,style={line width=1pt,fill=white,minimum size=5pt}]{$-2$}
\Vertex[Lpos=90,x=2,y=2,style={line width=1pt,fill=white,minimum size=5pt}]{$-1$}
\Vertex[Lpos=90,x=4,y=2,style={line width=1pt,fill=black,minimum size=5pt}]{$0$}
\Vertex[Lpos=90,x=6,y=2,style={line width=1pt,fill=white,minimum size=5pt}]{$1$}
\Vertex[Lpos=90,x=8,y=2,style={line width=1pt,fill=white,minimum size=5pt}]{$2$}
\Vertex[NoLabel,x=10,y=2,style={line width=1pt,fill=white,minimum size=0pt}]{K}
\Edges($-2$,$-1$,$0$,$1$,$2$)
\Edges[style={dotted}]($2$,K)
\Vertex[NoLabel,x=-2,y=0,style={line width=1pt,fill=white,minimum size=0pt}]{L}
\Vertex[NoLabel,x=0,y=0,style={line width=1pt,fill=white,minimum size=5pt}]{M}
\Vertex[NoLabel,x=2,y=0,style={line width=1pt,fill=white,minimum size=5pt}]{N}
\Vertex[NoLabel,x=4,y=0,style={line width=1pt,fill=black,minimum size=5pt}]{O}
\Vertex[NoLabel,x=6,y=0,style={line width=1pt,fill=white,minimum size=5pt}]{P}
\Vertex[NoLabel,x=8,y=0,style={line width=1pt,fill=white,minimum size=5pt}]{Q}
\Vertex[NoLabel,x=10,y=0,style={line width=1pt,fill=white,minimum size=0pt}]{R}
\Edges(M,N,O,P,Q)
\Edges[style={dotted}](L,M)
\Edges[style={dotted}](Q,R)
\end{tikzpicture}
\end{center}
\caption[Three types of rooted paths]{The rooted paths $[P_5,u_2]$, $[P(-2,\infty),0]$, and $[P_\infty,\cdot]$.}
\label{fig:types_of_paths}
\end{figure}
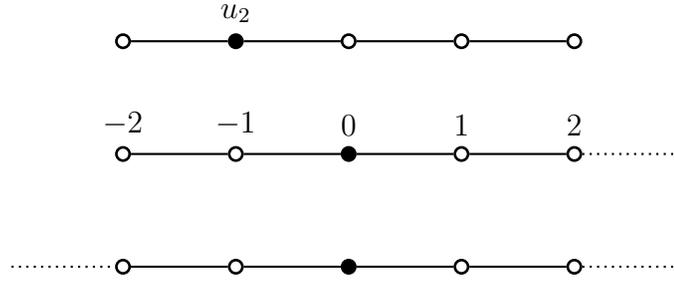

\begin{prop}
The semi-infinite and bi-infinite rooted paths are the limit points of the set of finite rooted paths.
\end{prop}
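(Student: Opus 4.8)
The plan is to show two things: first, that every semi-infinite or bi-infinite rooted path lies in the closure of the set of finite rooted paths (i.e., it is a limit of a sequence of finite rooted paths), and second, that each such point is not isolated — in fact this follows from the first part together with the Lemma characterizing isolated points, since semi- and bi-infinite paths are infinite rooted graphs and hence not isolated, so being a genuine limit point is automatic once we exhibit an approaching sequence of \emph{distinct} finite rooted paths. So the real content is constructing the approximating sequences and computing the relevant radii.

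First I would handle the bi-infinite path $[P_\infty,\cdot]$. The natural candidate sequence is $[P_{2n+1},o_n]$ where $o_n$ is the central vertex of the path on $2n+1$ vertices, so that the ball of radius $n$ around $o_n$ is the whole path $P_{2n+1}$ and agrees with the ball of radius $n$ around the origin in $P_\infty$ (both are paths on $2n+1$ vertices rooted at the middle). Thus $\rho([P_\infty,\cdot],[P_{2n+1},o_n]) \le \tfrac{1}{1+n} \to 0$. These finite rooted paths are pairwise distinct (the underlying paths have different lengths), so $[P_\infty,\cdot]$ is a limit point. Next I would do the semi-infinite path $[P(N,\infty),0]$: take the sequence $[P(N, N+n), 0]$ (a path on $n+1$ vertices, rooted at the left end if $N=0$, or more precisely rooted at the vertex that plays the role of $0$), where for $n$ large enough the ball of radius $n$ around the root already exhausts the finite path and coincides with $B_{P(N,\infty)}(0,n)$. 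Again $\rho \le \tfrac{1}{1+n}$, the paths are pairwise distinct, and convergence follows. By vertex-transitivity of $P_\infty$ and the fact that all semi-infinite rooted paths are isometric to $[P(0,\infty),0]$, these two cases cover every semi-infinite and bi-infinite rooted path.

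It remains to check that \emph{no other} elements of $\wh{\G}_2$ are limit points of the set of finite rooted paths — that is, that the closure of the set of finite rooted paths consists exactly of the finite rooted paths together with the semi- and bi-infinite ones. For this, suppose $[H,p] \in \wh{\G}_2$ is a limit of a sequence of finite rooted paths $[P_{k_n},o_n]$ with $\rho([H,p],[P_{k_n},o_n]) \to 0$. Then for each $s$, eventually $[B_H(p,s),p] = [B_{P_{k_n}}(o_n,s),o_n]$, and the right-hand side is a path rooted somewhere on it, so every ball in $H$ is a rooted path; since $H$ is connected, locally finite, and every vertex has degree at most $2$, $H$ must itself be a finite path, a semi-infinite path, or the bi-infinite path. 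Combined with the Lemma that isolated points are exactly the finite rooted graphs, we conclude the set of limit points is precisely $\{[P_\infty,\cdot]\} \cup \{[P(N,\infty),0] : N \in \ZZ\}$, which as rooted graphs is just the semi-infinite rooted path and the bi-infinite rooted path.

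The main obstacle I anticipate is purely bookkeeping: being careful that the chosen root $o_n$ in each finite path really does make $B(o_n,n)$ isomorphic \emph{as a rooted graph} to the corresponding ball in the infinite path — for the semi-infinite case one must root at the correct end, and for the bi-infinite case one must root at (or near) the center so that the two "arms" of the ball are balanced. None of this is deep, but it is where a sloppy argument would go wrong, so I would state the ball computations explicitly rather than wave at them.
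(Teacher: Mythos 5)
Your proof is correct and follows essentially the same two-step approach as the paper, which merely asserts the forward direction (``as the reader can verify'') and handles the converse by appealing to the completeness construction; your version supplies the ball computations and the explicit argument that any limit of finite rooted paths is itself a path, with finite paths excluded because they are isolated. One sentence is false as stated --- the semi-infinite rooted paths $[P(N,\infty),0]$ for distinct $N \le 0$ are \emph{not} all isomorphic as rooted graphs, since the root sits at distance $|N|$ from the end of the path --- but this does no harm, because your approximating sequence $[P(N,N+n),0]$ is already parametrized by $N$ and therefore covers every semi-infinite case without that reduction.
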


\begin{proof}
As the reader can verify, each of the mentioned rooted graphs is indeed a limit point. On the other hand, $[G,o]$ is a limit point only if there is a sequence of finite rooted paths that converges to $[G,o]$. Using the same construction as in the proof of completeness of the space $\wh{\G}_M$, the limit of the finite rooted paths will be a rooted path, bi-infinite or semi-infinite.
\end{proof}

With the proposition above, we can define the special subspace mentioned at the beginning of this section. The \emph{space of paths} $\mc{P}$ is the closure of the set of finite rooted paths.

\subsection{Basics and Properties}
One of the main attractions of the finite rooted paths is the ability to calculate the law explicitly.

\begin{prop}
Let $k$ be a nonnegative integer. Write $P_{2k} = u_1 \cdots u_{2k}$ and $P_{2k+1} = u_1 \cdots u_{2k+1}$. Then
\[
\Psi(P_{2k})[P_{2k},u_i] = \frac{1}{k}
\]
for all $i \in \{1,\ldots,k\}$ and
\[
\Psi(P_{2k+1})[P_{2k+1},u_i] =
\begin{cases}
\frac{2}{2k + 1} & \text{ if $i \in \{1,\ldots,k\}$},\\
\frac{1}{2k + 1} & \text{ if $i = k + 1$}.
\end{cases}
\]
\end{prop}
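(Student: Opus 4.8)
The plan is to reduce everything to the definition of the law. Since each path $P_n$ is connected, $(P_n)_{u_i} = P_n$ for every vertex $u_i$, and therefore
\[
\Psi(P_n)[P_n,u_i] \;=\; \frac{|\Aut(P_n)u_i|}{|V(P_n)|} \;=\; \frac{|\Aut(P_n)u_i|}{n}.
\]
So the proposition will follow once we pin down the automorphism group of a path and the size of the orbit of each of its vertices.

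First I would describe $\Aut(P_n)$. For $n \geq 2$ the endpoints $u_1$ and $u_n$ are the only vertices of degree $1$ (for $n \geq 3$; for $n = 2$ they are simply the only two vertices), so every automorphism $\sigma$ permutes $\{u_1,u_n\}$. If $\sigma(u_1) = u_1$, then, $u_1$ having the single neighbour $u_2$, we get $\sigma(u_2) = u_2$, and inductively $\sigma(u_i) = u_i$ for all $i$, so $\sigma = \mathrm{id}$; if $\sigma(u_1) = u_n$, the same argument forces $\sigma(u_i) = u_{n+1-i}$, the reflection $r$. Hence $\Aut(P_n) = \{\mathrm{id},r\}$ for $n \geq 2$, while $\Aut(P_1)$ is trivial. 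In all cases the orbit of $u_i$ is $\{u_i,u_{n+1-i}\}$, of size $2$ unless $i = n+1-i$, in which case it is a singleton.

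Next I would split on the parity of $n$. When $n = 2k$ with $k \geq 1$, the equation $i = n+1-i$ has no integer solution, so every orbit has size $2$; the rooted connected components are represented by $u_1,\dots,u_k$, and the formula above gives $\Psi(P_{2k})[P_{2k},u_i] = 2/(2k) = 1/k$. When $n = 2k+1$, the only index with $i = n+1-i$ is $i = k+1$, whose orbit is $\{u_{k+1}\}$; the other orbits, represented by $u_1,\dots,u_k$, have size $2$. Dividing by $n = 2k+1$ gives $2/(2k+1)$ for $i \leq k$ and $1/(2k+1)$ for $i = k+1$, as claimed. As a sanity check, the total masses are $k\cdot(1/k) = 1$ and $k\cdot 2/(2k+1) + 1/(2k+1) = 1$, consistent with $\Psi(P_n)$ being a probability measure.

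The only step needing genuine care is verifying that $\Aut(P_n)$ contains nothing beyond the identity and the reflection — the degree argument together with the induction along the path handles this, but one must treat the degenerate cases $n \in \{1,2\}$ by direct inspection. It is also worth remarking, so that the listed values are attached to distinct points of $\wh{\G}_2$, that $[P_n,u_i] = [P_n,u_j]$ with $i,j \leq \ceil{n/2}$ forces $i = j$: the distance from $u_i$ to the far end of $P_n$ equals $n-i$, which is strictly decreasing over that range of indices.
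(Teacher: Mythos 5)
Your proof is correct and rests on the same key fact as the paper's: the reflection $u_i \mapsto u_{n+1-i}$ identifies $[P_n,u_i]$ with $[P_n,u_{n-i+1}]$, so the orbit of $u_i$ has size $2$ except at the midpoint of an odd path. The paper's proof is a one-line appeal to this symmetry, while you additionally verify that $\Aut(P_n)$ contains nothing beyond the identity and the reflection and that the listed rooted graphs are pairwise distinct — details the paper leaves to the reader but which your argument handles correctly.
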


\begin{proof}
Given a nonnegative integer $n$, let $P_n = u_1 \cdots u_n$. Then
\[
[P_n,u_i] = [P_n,u_{n-i+1}]
\]
for all $i \in \{1,\ldots,\floor{n/2}\}$, and the result follows.
\end{proof}

To illustrate the usefulness of these explicit formulae, consider the following example. Let $\mu_n = \Psi(P_n)$ and let $\mu$ be the Dirac measure on the bi-infinite path rooted at some vertex. Recall that the degree function $\deg$ defined in Proposition \ref{degree_function} is continuous. Then
\begin{align*}
\int \deg~d\mu_{2k} &= \frac{\sum_{i=1}^k \deg[P_{2k},u_i]}{k}\\
                    &= \frac{\sum_{i=1}^k \deg_{P_{2k}}(u_i)}{k}\\
                    &= \frac{2k - 1}{k}\\
                    &= 2 - \frac{1}{k}
\end{align*}
and
\[
\int \deg~d\mu = \deg[P_\infty,\cdot] = 2.
\]
Hence
\[
\lim_{k \to \infty} \int \deg~d\mu_{2k} = \int \deg~d\mu.
\]
In fact this is not only true for the function $\deg$, but for all continuous functions whose domain is $\wh{\G}_M$. Indeed we will soon demonstrate this result.

A slightly stronger variation of something that has already been mentioned is summarized in the following lemma. Using this lemma, we will show that the sequence of laws of finite paths converges to the Dirac measure on the bi-infinite path.

\begin{lem}\label{convergence_of_paths}
The sequence $\left([P_{2k},u_k]\right)_{k=1}^\infty$ converges to $[P_\infty,\cdot]$. Furthermore,
\[
\rho\left([P_{2(k+l)},u_k],[P_\infty,\cdot]\right) = \rho\left([P_{2k},u_k],[P_\infty,\cdot]\right)
\]
for every nonnegative integer $l$.
\end{lem}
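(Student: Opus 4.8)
The plan is to reduce the entire lemma to one explicit computation of rooted balls. Write $P_n = u_1\cdots u_n$; since distances in a path are differences of indices, $d(u_k,u_j) = |j-k|$, so $B_{P_{2k}}(u_k,s)$ is the subgraph of $P_{2k}$ induced by $\{u_j : |j-k|\le s\}$. Because $P_{2k}$ is finite and $P_\infty$ is infinite we have $[P_{2k},u_k]\neq[P_\infty,\cdot]$, hence $\rho([P_{2k},u_k],[P_\infty,\cdot]) = 1/(1+r)$ with $r = \sup\{s\in\NN : [B_{P_{2k}}(u_k,s),u_k] = [B_{P_\infty}(0,s),0]\}$, and everything comes down to showing $r = k-1$.

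For the radii $s\le k-1$ one has $k-s\ge 1$ and $k+s\le 2k-1$, so the ball is exactly $\{u_{k-s},\ldots,u_{k+s}\}$: a path on $2s+1$ vertices with the root $u_k$ at its centre, $s$ vertices on each side. The map $u_{k+i}\mapsto i$ for $-s\le i\le s$ is a rooted isomorphism onto $B_{P_\infty}(0,s)$, so $[B_{P_{2k}}(u_k,s),u_k] = [B_{P_\infty}(0,s),0]$ for every $s\le k-1$. At radius $s=k$, however, the conditions $|j-k|\le k$ and $1\le j\le 2k$ force $B_{P_{2k}}(u_k,k)$ to be all of $P_{2k}$, a graph on $2k$ vertices, whereas $B_{P_\infty}(0,k)$ has $2k+1$ vertices, so the two are not even isomorphic as graphs. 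Since the rooted balls agree for every $s\le k-1$ but not for $s=k$, we get $r=k-1$, hence $\rho([P_{2k},u_k],[P_\infty,\cdot]) = 1/k$; as $1/k\to 0$, the sequence converges to $[P_\infty,\cdot]$.

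For the ``furthermore'' clause I would repeat the identical bookkeeping with $P_{2(k+l)} = u_1\cdots u_{2k+2l}$ rooted at $u_k$, the only change being that the right-hand endpoint now has index $2k+2l\ge 2k$. For $s\le k-1$ we still have $k+s\le 2k-1\le 2k+2l$, so the ball is again $\{u_{k-s},\ldots,u_{k+s}\}\cong B_{P_\infty}(0,s)$; and for $s=k$ the conditions $|j-k|\le k$, $1\le j\le 2k+2l$ give the induced subgraph on $\{u_1,\ldots,u_{2k}\}$ (using $2k\le 2k+2l$), a path on $2k$ vertices, which again differs from $B_{P_\infty}(0,k)$. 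Hence the supremum is still $k-1$ and $\rho([P_{2(k+l)},u_k],[P_\infty,\cdot]) = 1/k = \rho([P_{2k},u_k],[P_\infty,\cdot])$; the case $l=0$ is trivial.

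\textbf{Main obstacle.} None of this is deep; the one place to stay alert is the off-by-one coming from $P_{2k}$ having an even number of vertices and hence no true centre, so that $u_k$ sits just left of centre. The effect is that the rooted balls agree only up to radius $k-1$ (not $k$), and the first disagreement, at radius $k$, is visible already from a vertex count --- $2k$ versus $2k+1$ --- rather than from any subtler structural feature. Once that bookkeeping is pinned down, the independence from $l$ is immediate because enlarging the path only on the far side never touches a ball of radius $\le k-1$ and still leaves the radius-$k$ ball a path on $2k$ vertices.
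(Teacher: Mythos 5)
Your proof is correct: the computation that the rooted balls of $P_{2k}$ (or $P_{2(k+l)}$) about $u_k$ agree with those of $P_\infty$ exactly up to radius $k-1$, giving $\rho = 1/k$ independently of $l$, is the natural argument, and the vertex-count comparison ($2k$ versus $2k+1$) cleanly certifies the failure at radius $k$. The paper states this lemma without proof, so there is nothing to compare against; your write-up supplies exactly the missing verification.
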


\begin{prop}
The sequence $(\mu_{2k})_{k=1}^\infty$ converges weakly to $\mu$.
\end{prop}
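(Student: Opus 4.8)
The plan is to verify the definition of weak convergence directly: fix an arbitrary $f \in \mb{C}(\wh{\G}_2)$ and show that $\int f\,d\mu_{2k} \to \int f\,d\mu = f[P_\infty,\cdot]$. First I would rewrite the integral against $\mu_{2k} = \Psi(P_{2k})$ using the earlier formula for integrating against a law (with $G = P_{2k}$ connected), and then use the reflection automorphism $u_i \mapsto u_{2k+1-i}$ of $P_{2k}$, which gives $[P_{2k},u_i] = [P_{2k},u_{2k+1-i}]$. This yields
\[
\int f\,d\mu_{2k} = \frac{1}{2k}\sum_{i=1}^{2k} f[P_{2k},u_i] = \frac{1}{k}\sum_{i=1}^{k} f[P_{2k},u_i],
\]
so that
\[
\left|\int f\,d\mu_{2k} - f[P_\infty,\cdot]\right| \le \frac{1}{k}\sum_{i=1}^{k}\bigl|f[P_{2k},u_i] - f[P_\infty,\cdot]\bigr|.
\]

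The second step supplies the geometric input. For $1 \le i \le k$, the ball of radius $i-1$ about $u_i$ in $P_{2k}$ is the path $u_1\cdots u_{2i-1}$ rooted at its centre $u_i$, which is isomorphic to $[B_{P_\infty}(\cdot,i-1),\cdot]$; hence $\rho([P_{2k},u_i],[P_\infty,\cdot]) \le 1/i$. This inequality is exactly what Lemma \ref{convergence_of_paths} sharpens to an equality and propagates across the sequence (its stability clause, applied with $k$ replaced by $i$ and $l = k - i \ge 0$), but for the present purpose the inequality is all that is needed. In particular, all but boundedly many of the vertices $u_1,\dots,u_k$ have rooted component as close to $[P_\infty,\cdot]$ as desired.

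The third step makes this quantitative. Given $\e > 0$, continuity of $f$ at the single point $[P_\infty,\cdot]$ produces $\delta > 0$ with $|f[H,p] - f[P_\infty,\cdot]| < \e$ whenever $\rho([H,p],[P_\infty,\cdot]) < \delta$; fix an integer $r$ with $r + 1 > 1/\delta$. For every index $i$ with $r < i \le k$ we then have $\rho([P_{2k},u_i],[P_\infty,\cdot]) \le 1/i \le 1/(r+1) < \delta$, so $|f[P_{2k},u_i] - f[P_\infty,\cdot]| < \e$. Writing $C = \sup|f| < \infty$ (finite since $f$ is bounded), the at most $r$ remaining indices contribute at most $2Cr$ to the sum, so for $k \ge r$,
\[
\left|\int f\,d\mu_{2k} - f[P_\infty,\cdot]\right| \le \frac{(k-r)\e + 2Cr}{k} \le \e + \frac{2Cr}{k},
\]
which drops below $2\e$ once $k > 2Cr/\e$. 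Since $\e$ was arbitrary, $\int f\,d\mu_{2k} \to f[P_\infty,\cdot]$ for every $f \in \mb{C}(\wh{\G}_2)$, that is, $\mu_{2k} \Rightarrow \mu$.

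The only delicate point is the bookkeeping in the last step: isolating the few vertices near the ends of $P_{2k}$ — those whose rooted components resemble semi-infinite rather than bi-infinite paths — and observing that the boundedness of $f$ (guaranteed because $f \in \mb{C}(\wh{\G}_2)$) forces their $O(1/k)$-weighted contribution to vanish. Everything else is routine, and it is worth noting that only continuity of $f$ at $[P_\infty,\cdot]$ is used, not uniform continuity.
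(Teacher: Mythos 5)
Your proof is correct and follows essentially the same route as the paper: reduce to the average of $f$ over $u_1,\dots,u_k$, split off the boundedly many indices near the end of the path, use continuity of $f$ at $[P_\infty,\cdot]$ on the remaining indices and boundedness of $f$ on the rest. The only cosmetic difference is that you prove the needed metric estimate $\rho([P_{2k},u_i],[P_\infty,\cdot])\le 1/i$ directly from the structure of the balls rather than invoking Lemma \ref{convergence_of_paths}, which if anything makes the argument more self-contained.
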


\begin{proof}
Let $g : \wh{\G}_M \to \RR$ be an arbitrary continuous function with $|g| \leq B$ for some real number $B$. Given $\e > 0$, there exists $\delta > 0$ such that for all $[G,o] \in \wh{\G}_M$,
\[
\rho\left([G,o],[P_\infty,\cdot]\right) < \delta ~~ \Rightarrow ~~ |g[G,o] - g[P_\infty,\cdot]| < \frac{\e}{2}.
\]
By Lemma \ref{convergence_of_paths}, there exists $N_1$ such that
\[
\forall k \geq N_1 ~~ \forall l \geq 0 ~~ \rho\left([P_{2(k+l)},u_k],[P_\infty,\cdot]\right) < \delta.
\]
Furthermore, there exists $N_2$ such that
\[
\forall k \geq N_2 ~~ \frac{1}{k} < \frac{\e}{4(M + 1)N_1}.
\]
Choose $N = \max\{N_1,N_2\}$. Then
\begin{align*}
\left|\int g~d\mu_k - \int g~d\mu\right| &=    \frac{1}{k}\left|\sum_{i=1}^k \left(g[P_{2k},u_i] - g[P_\infty,\cdot]\right)\right|\\
                                         &\leq \frac{1}{k}\sum_{i=1}^k \left|g[P_{2k},u_i] - g[P_\infty,\cdot]\right|\\
                                         &\leq \frac{1}{k}\sum_{i=1}^{N_1-1} \left|g[P_{2k},u_i] - g[P_\infty,\cdot]\right| + \frac{1}{k}\sum_{i=N_1}^k \left|g[P_{2k},u_i] - g[P_\infty,\cdot]\right|\\
                                         &<    \frac{2M(N_1 - 1)}{k} + \frac{k - N_1}{k} \cdot \frac{\e}{2}\\
                                         &<    \frac{\e}{2} + \frac{\e}{2}\\
                                         &=    \e
\end{align*}
for all $k \geq N$. Hence $\mu_{2k} \Rightarrow \mu$.
\end{proof}

\begin{cor}
The same is true for the sequence $(\mu_{2k+1})_{k=1}^\infty$.
\end{cor}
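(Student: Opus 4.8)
The plan is to run essentially the same argument used for the even case, so I will only describe the two modifications required.

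First I would prove the odd analogue of Lemma \ref{convergence_of_paths}: writing $P_{2k+1} = u_1 \cdots u_{2k+1}$, I claim that for every $i \in \{1,\ldots,k+1\}$ we have
\[
\rho\left([P_{2k+1},u_i],[P_\infty,\cdot]\right) \leq \frac{1}{i},
\]
and that this quantity is unchanged if the path is lengthened while $i$ is held fixed. The reason is that the ball of radius $s$ about $u_i$ in $P_{2k+1}$ has $\min(i-1,\,2k+1-i)$ vertices on the shorter side, so it is isomorphic, as a rooted graph, to $B_{P_\infty}(0,s)$ precisely when $s \leq \min(i-1,\,2k+1-i)$; since $i \leq k+1$ the binding constraint is $s \leq i-1$. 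Hence the supremum $r$ in the definition of $\rho$ equals $i-1$, giving the stated bound, and enlarging $k$ does not change $r$ as long as $i \leq k+1$ still holds. By the symmetry $[P_{2k+1},u_i] = [P_{2k+1},u_{2k+2-i}]$ the same estimate covers the indices $i \geq k+2$. In particular, once $i \geq N_1$ the rooted graph $[P_{2k+1},u_i]$ lies within $1/N_1$ of $[P_\infty,\cdot]$, uniformly in $k$; and the special middle vertex $u_{k+1}$ satisfies $\rho([P_{2k+1},u_{k+1}],[P_\infty,\cdot]) \leq 1/(k+1) \to 0$.

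Next, given a bounded continuous $g : \wh{\G}_M \to \RR$ with $|g| \leq B$ and $\e > 0$, I would choose $\delta > 0$ by continuity of $g$ at $[P_\infty,\cdot]$ so that $\rho([G,o],[P_\infty,\cdot]) < \delta$ forces $|g[G,o]-g[P_\infty,\cdot]| < \e/2$, then pick $N_1$ with $1/N_1 < \delta$. Using the explicit formula for $\Psi(P_{2k+1})$ from the preceding proposition together with the fact that its total mass is $\tfrac{2k}{2k+1}+\tfrac{1}{2k+1}=1$, I would write
\[
\int g~d\mu_{2k+1} - g[P_\infty,\cdot] = \frac{2}{2k+1}\sum_{i=1}^{k}\left(g[P_{2k+1},u_i]-g[P_\infty,\cdot]\right) + \frac{1}{2k+1}\left(g[P_{2k+1},u_{k+1}]-g[P_\infty,\cdot]\right),
\]
split the sum at $i = N_1$, bound the $i < N_1$ terms and the middle term crudely by $2B$ times their combined mass (which is $O(N_1/k)$, hence $< \e/2$ for $k$ large), and bound each of the remaining $N_1 \leq i \leq k$ terms by $\e/2$, their combined mass being at most $1$. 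Taking $k$ large enough then yields $\left|\int g~d\mu_{2k+1} - \int g~d\mu\right| < \e$, so $\mu_{2k+1} \Rightarrow \mu$.

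The only real obstacle is bookkeeping rather than substance: one must track the asymmetric weights of $\Psi(P_{2k+1})$ (in particular that the half-weighted centre $u_{k+1}$ is itself close to $[P_\infty,\cdot]$, so it can be absorbed into the good part of the estimate) and verify that the weights sum to $1$ so that subtracting $g[P_\infty,\cdot]$ distributes over the sum. In fact one could dispense with rewriting the estimate altogether by observing that the proof of the preceding proposition used only two features of $\mu_n$: that $\mu_n$ is finitely supported with weights that are $O(1/n)$ off a set of points converging to $[P_\infty,\cdot]$, and that $\mu_n(\wh{\G}_M)=1$ — both of which hold for odd $n$ by the first step above.
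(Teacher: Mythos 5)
Your proposal is correct and is exactly the \emph{mutatis mutandis} argument the paper intends: the corollary is stated without proof precisely because the even-case proof carries over once one records the odd analogue of Lemma \ref{convergence_of_paths} (with $r = \min\{i-1,\,2k+1-i\}$, so $\rho([P_{2k+1},u_i],[P_\infty,\cdot]) = 1/i$ for $i \leq k+1$) and accounts for the weights $\tfrac{2}{2k+1}$ and $\tfrac{1}{2k+1}$ summing to $1$. Your bookkeeping of the split at $i = N_1$ and of the middle vertex $u_{k+1}$ (which may be placed in either the crude or the good part of the estimate, since its mass tends to $0$ and its distance to $[P_\infty,\cdot]$ also tends to $0$) is sound, so nothing further is needed.
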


The two facts above may be combined using the following lemma.

\begin{lem}
Let $(a_n)_{n=1}^\infty$ be a sequence of real numbers. If $a_{2k} \to a$ and $a_{2k+1} \to a$, then $a_n \to a$.
\end{lem}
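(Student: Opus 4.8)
The statement is the elementary real-analysis fact that if the even-indexed and odd-indexed subsequences of $(a_n)$ both converge to $a$, then the whole sequence converges to $a$. The plan is to argue directly from the $\e$--$N$ definition of convergence. Given $\e > 0$, I would first invoke the hypothesis $a_{2k} \to a$ to produce an index $N_1$ such that $|a_{2k} - a| < \e$ for all $k \geq N_1$, and then invoke $a_{2k+1} \to a$ to produce an index $N_2$ such that $|a_{2k+1} - a| < \e$ for all $k \geq N_2$.

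The key step is to merge these two thresholds into a single threshold on $n$ rather than on the subsequence index $k$. I would set $N = \max\{2N_1, 2N_2 + 1\}$ and check that every $n \geq N$ satisfies $|a_n - a| < \e$: if $n$ is even, write $n = 2k$ with $k \geq N_1$ (since $n \geq 2N_1$), so the first estimate applies; if $n$ is odd, write $n = 2k+1$ with $k \geq N_2$ (since $n \geq 2N_2 + 1$), so the second estimate applies. In either parity case we get $|a_n - a| < \e$, which is exactly the definition of $a_n \to a$.

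There is essentially no obstacle here; the only point requiring a moment's care is the bookkeeping in translating the bound on $k$ back to a bound on $n$ through the relations $n = 2k$ and $n = 2k+1$, making sure the constant $N$ is chosen large enough to cover both parities simultaneously. This lemma then allows the preceding proposition and its corollary to be combined into the single statement that $(\mu_n)_{n=1}^\infty$ converges weakly to $\mu$, by applying it to the real sequence $a_n = \int g \, d\mu_n$ for each fixed continuous $g$.
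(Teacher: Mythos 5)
Your proposal is correct and follows exactly the paper's own argument: extract $N_1$ and $N_2$ from the two subsequence hypotheses, set $N = \max\{2N_1, 2N_2+1\}$, and verify the bound for each parity of $n \geq N$. The only difference is that you spell out the parity check, which the paper leaves implicit.
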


\begin{proof}
Given $\e > 0$, there exists $N_1$ such that for all $k \geq N_1$, $|a_{2k} - a| < \e$, and $N_2$ such that for all $k \geq N_2$, $|a_{2k+1} - a| < \e$. Choose $N = \max\{2N_1,2N_2 + 1\}$. Then $|a_n - a| < \e$ for all $n \geq N$.
\end{proof}

\begin{theo}\label{weak_limit_of_paths}
The Dirac measure on $[P_\infty,\cdot]$ is the weak limit of $(\Psi(P_n))_{n=1}^\infty$.
\end{theo}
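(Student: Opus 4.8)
The plan is to assemble this statement from the three immediately preceding results: the Proposition that $\mu_{2k} \Rightarrow \mu$, the Corollary that $\mu_{2k+1} \Rightarrow \mu$, and the elementary Lemma on even/odd subsequences of real numbers (here $\mu_n = \Psi(P_n)$ and $\mu = \delta_{[P_\infty,\cdot]}$). Nothing new about paths or about the metric $\rho$ is needed; all of the genuine work has already been carried out in the Proposition, whose proof combined the explicit formulas for $\Psi(P_n)$ with Lemma \ref{convergence_of_paths} and a uniform-continuity estimate on balls.

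First I would fix an arbitrary bounded continuous function $f \in \mb{C}(\wh{\G}_M)$ and define the real sequence $a_n = \int f~d\mu_n$. By the Proposition, the even-indexed subsequence satisfies $a_{2k} \to \int f~d\mu$, and by the Corollary the odd-indexed subsequence satisfies $a_{2k+1} \to \int f~d\mu$. Applying the Lemma on sequences of real numbers with $a = \int f~d\mu$ then gives $a_n \to \int f~d\mu$, i.e.
\[
\lim_{n \to \infty} \int f~d\mu_n = \int f~d\mu.
\]

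Since $f$ was an arbitrary element of $\mb{C}(\wh{\G}_M)$, the definition of weak convergence yields $\Psi(P_n) = \mu_n \Rightarrow \mu = \delta_{[P_\infty,\cdot]}$, which is the assertion. I do not anticipate any real obstacle here: the only point requiring a moment's care is that $\mu_n$ is genuinely defined for every $n$ (not just along one parity class), so the statement is not a mere restatement of the Proposition but needs the even/odd splitting supplied by the Lemma — and one should note in passing that $[P_\infty,\cdot]$ does lie in $\wh{\G}_M$ for every $M \geq 2$, so the limit measure is a legitimate element of $\M_M$.
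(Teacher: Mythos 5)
Your proposal is correct and is exactly the argument the paper intends: the theorem is stated right after the sentence "The two facts above may be combined using the following lemma," so the paper's (implicit) proof is precisely the reduction to the even-index Proposition, the odd-index Corollary, and the elementary even/odd lemma applied to $a_n = \int f~d\mu_n$ for each fixed $f \in \mb{C}(\wh{\G}_M)$. No gap; nothing further is needed.
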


Something that is much more surprising is that this is the only possible limit in the case of rooted paths.

\begin{prop}
Suppose that $\mu$ is a probability measure in $\M_M$ whose support is a subset of $\mc{P}$. If $(\mu_n)_{n=1}^\infty$ is a sequence of laws of finite paths that weakly converges to $\mu$, then either this sequence is eventually constant or it contains a subsequence of $(\Psi(P_n))_{n=1}^\infty$.
\end{prop}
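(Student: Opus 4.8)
The plan is to reduce the statement to the behaviour of the sequence of path-lengths. Write $\mu_n = \Psi(P_{k_n})$ for a sequence of positive integers $(k_n)_{n=1}^\infty$, and split on whether $(k_n)$ is bounded. If $(k_n)$ is unbounded, it has a strictly increasing subsequence $k_{n_1} < k_{n_2} < \cdots$, and then $(\mu_{n_j})_{j=1}^\infty = (\Psi(P_{k_{n_j}}))_{j=1}^\infty$ is by construction a subsequence of $(\Psi(P_m))_{m=1}^\infty$; this is the second alternative. So the real work is the case where $(k_n)$ is bounded, hence takes only finitely many distinct values, and there I must show $(\mu_n)$ is eventually constant.

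The engine of that case is a uniqueness argument combined with injectivity of $\Psi$ on finite paths. For injectivity, observe that every rooted graph in the (finite) support of $\Psi(P_k)$ is of the form $[(P_k)_{u_i},u_i] = [P_k,u_i]$ and hence has exactly $k$ vertices; since this support is nonempty, it determines $k$, so $\Psi(P_k) = \Psi(P_m)$ forces $k = m$. Now fix a value $v$ that is assumed infinitely often by $(k_n)$, say at positions $n_1 < n_2 < \cdots$, so that $\mu_{n_j} = \Psi(P_v)$ for all $j$. Being a subsequence of a weakly convergent sequence, $(\mu_{n_j})_{j=1}^\infty$ converges weakly to $\mu$. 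By the lemma characterizing isolated points, each point $[P_v,u_i]$ of the support of $\Psi(P_v)$ is isolated in $\wh{\G}_M$, so the indicator $\mathbf{1}_{\{[P_v,u_i]\}}$ is a bounded continuous function on $\wh{\G}_M$; applying weak convergence of $(\mu_{n_j})$ to this test function gives $\mu(\{[P_v,u_i]\}) = \Psi(P_v)(\{[P_v,u_i]\})$ for each $i$. Summing over the finitely many $i$ shows that $\mu$ is concentrated on the support of $\Psi(P_v)$ and agrees with it there, so $\mu = \Psi(P_v)$. By injectivity, $v$ is then uniquely determined by $\mu$.

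Consequently at most one value can occur infinitely often among the $k_n$; since $(k_n)$ is an infinite sequence drawn from a finite set, exactly one value $v$ occurs infinitely often while every other value occurs only finitely often, and therefore $k_n = v$ for all sufficiently large $n$, i.e. $(\mu_n)$ is eventually equal to $\Psi(P_v)$. The steps are all short, and I expect the only point needing care is the claim that an infinitely-repeated value $v$ forces $\mu = \Psi(P_v)$ — essentially uniqueness of the weak limit — which I would establish concretely via continuity of indicators of isolated points, as above, rather than by invoking a general uniqueness theorem. I note in passing that the hypothesis that the support of $\mu$ is contained in $\mc{P}$ is not needed for this argument.
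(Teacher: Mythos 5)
Your proof is correct. It follows the same broad outline as the paper's --- both arguments come down to the fact that a weakly convergent sequence of laws of paths taking only finitely many distinct values must be eventually constant --- but the decomposition and the treatment of that key fact differ. The paper assumes the sequence is not eventually constant, shows that the path-lengths $f(n)$ must exceed any given value cofinally (otherwise some tail would have finitely many distinct terms and, being convergent, would be eventually constant), and then extracts an increasing subsequence; the step ``convergent with finitely many distinct terms implies eventually constant'' is simply asserted. You instead split on whether the length sequence $(k_n)$ is bounded, dispose of the unbounded case immediately, and in the bounded case actually \emph{prove} the key fact: since each $[P_v,u_i]$ is a finite rooted graph and hence isolated in $\wh{\G}_M$, its indicator is a bounded continuous test function, so any length $v$ that repeats infinitely often forces $\mu = \Psi(P_v)$; injectivity of $\Psi$ on paths (read off from the vertex count of the support) then shows at most one such $v$ exists, and the pigeonhole principle finishes. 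What your route buys is that it makes explicit the separation property of the weak topology that the paper's argument silently relies on, and it isolates exactly which features of laws of paths are needed (isolatedness of finite rooted graphs, injectivity of $\Psi$ on $\{P_n\}$); your observation that the hypothesis on the support of $\mu$ is never used is also accurate, as the paper's proof does not use it either. The paper's version is shorter but leaves that central step to the reader.
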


\begin{proof}
Suppose that $(\mu_n)_{n=1}^\infty$ is not eventually constant. Define a function $f$ as follows: for every $n \geq 1$, there exists an $m \geq 1$ such that $\mu_n = \Psi(P_m)$; let $f(n) = m$.

We will show that
\[
\forall N ~~ \exists n > N ~~ f(N) < f(n).
\]
To derive a contradiction, assume that there exists $N$ such that $f(N) \geq f(n)$ for all $n > N$. It follows that
\[
(\mu_k)_{k=N+1}^\infty = (\Psi(P_{f(k)}))_{k=N+1}^\infty
\]
has only a finite number of distinct terms. Since this sequence also converges to $\mu$, we know it is eventually constant, and so $(\mu_n)_{n=1}^\infty$ is too; a contradiction.

Using this fact, if $k = 1$, choose $n_1 > 1$ such that $f(1) < f(n_1)$; then choose $n_2 > n_1$ such that $f(n_1) < f(n_2)$; and so forth. With this construction,
\[
n_1 < n_2 < \cdots
\]
and
\[
f(n_1) < f(n_2) < \cdots
\]
which means $(\mu_{n_k})_{k=1}^\infty$ is a subsequence of $(\mu_n)_{n=1}^\infty$, and $(\Psi(P_{f(n_k)}))_{k=1}^\infty$ is a subsequence of $(\Psi(P_m))$.
\end{proof}

\begin{cor}
If $\M_M^0(\mc{P})$ is the set of measures in $\M_M^0$ whose support is a subset of $\mc{P}$, then
\[
\M_M^0(\mc{P}) = \Psi(\{P_n ~:~ n \in \NN^\ast\}) \cup \{\delta_{[P_\infty,\cdot]}\}.
\]
\end{cor}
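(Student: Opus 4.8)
The plan is to prove the two inclusions separately, and the inclusion $\Psi(\{P_n : n \in \NN^\ast\}) \cup \{\delta_{[P_\infty,\cdot]}\} \subseteq \M_M^0(\mc{P})$ is the routine one, which I would dispose of first. Each $\Psi(P_n)$ with $n \in \NN^\ast$ is a law, hence lies in $\M_M^0$, and its support consists of finite rooted paths, so it is contained in $\mc{P}$; thus $\Psi(P_n) \in \M_M^0(\mc{P})$. The Dirac measure $\delta_{[P_\infty,\cdot]}$ lies in $\M_M^0$ by Theorem \ref{weak_limit_of_paths}, being the weak limit of $(\Psi(P_n))_{n=1}^\infty$, and its support is the single point $[P_\infty,\cdot]$, which belongs to $\mc{P}$ since it is a limit point of the finite rooted paths. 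Hence the right-hand set is contained in $\M_M^0(\mc{P})$.

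For the reverse inclusion, let $\mu \in \M_M^0(\mc{P})$, so that $\mu$ is the weak limit of some sequence of laws. The key reduction is to show that $\mu$ is in fact the weak limit of a sequence $(\mu_n)_{n=1}^\infty$ of laws of \emph{finite paths}. Once this is in hand, the preceding Proposition finishes the argument: either $(\mu_n)$ is eventually constant, in which case $\mu = \Psi(P_m)$ for some $m \in \NN^\ast$; or $(\mu_n)$ has a subsequence that is a subsequence of $(\Psi(P_n))_{n=1}^\infty$, in which case, since every subsequence of a weakly convergent sequence shares its limit and $\Psi(P_n) \Rightarrow \delta_{[P_\infty,\cdot]}$ by Theorem \ref{weak_limit_of_paths}, we obtain $\mu = \delta_{[P_\infty,\cdot]}$. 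Either way $\mu \in \Psi(\{P_n : n \in \NN^\ast\}) \cup \{\delta_{[P_\infty,\cdot]}\}$, as desired.

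The step I expect to be the main obstacle is precisely this reduction. Starting from laws $\Psi(G_n) \Rightarrow \mu$ whose limit is supported on $\mc{P}$, I would use that local data on $\wh{\G}_M$ is continuous: the degree function is $M$-Lipschitz by Proposition \ref{degree_function}, and for each fixed $r$ the assignment $[G,o] \mapsto [B_G(o,r),o]$ is locally constant. Consequently the indicator of ``the ball of radius $r$ about the root is not a rooted path'' is continuous, and integrating it against $\Psi(G_n)$ forces the fraction of vertices of $G_n$ whose $r$-neighbourhood fails to be path-like --- in particular the fraction of vertices of degree at least $3$ --- to tend to $0$ as $n \to \infty$. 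One would then modify $G_n$ by discarding this asymptotically negligible part and straightening the remainder into a disjoint union of finite paths, verify (using the uniform continuity of each bounded continuous test function on the compact space $\wh{\G}_M$) that $\int f\,d\Psi(G_n)$ changes by an amount tending to $0$, and then handle the passage from disjoint unions of finite paths to the single connected finite paths featuring in the preceding Proposition, drawing on Equation \ref{law_linearity} and the rational convexity of the set of laws. This last point is the delicate one --- a disjoint union of two distinct finite paths already yields a law supported on $\mc{P}$ --- so making the surgery preserve weak convergence while arriving at the form required by the preceding Proposition is the heart of the matter.
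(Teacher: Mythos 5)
Your first inclusion is fine, and you are exactly right that everything hinges on reducing an arbitrary weakly convergent sequence of laws with limit supported in $\mc{P}$ to a sequence of laws of \emph{single} finite paths: the paper states this corollary with no argument at all, the intended derivation evidently being to feed such a sequence into the preceding proposition and then invoke Theorem \ref{weak_limit_of_paths}, just as you describe. But the ``delicate point'' you flag in your last sentence is not merely delicate --- it is fatal, and the corollary as stated is false. Take $G = P_1 + P_2$, a single vertex together with a disjoint edge. Then $G \in \G_M^0$ and
\[
\Psi(P_1 + P_2) = \tfrac{1}{3}\,\delta_{[P_1,\cdot]} + \tfrac{2}{3}\,\delta_{[P_2,u_1]},
\]
a law whose support $\{[P_1,\cdot],[P_2,u_1]\}$ lies in $\mc{P}$, so $\Psi(P_1+P_2) \in \M_M^0(\mc{P})$. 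Yet it is not $\delta_{[P_\infty,\cdot]}$, and it is not $\Psi(P_n)$ for any $n$, since $\Psi(P_n)$ is supported on rootings of the single connected path $P_n$, and no $P_n$ has both $[P_1,\cdot]$ and $[P_2,u_1]$ among its rooted connected components. By the preceding proposition it is not even a weak limit of laws of single finite paths, which is precisely why the reduction you propose cannot succeed: no surgery on the approximating graphs can produce single connected paths while preserving the limit.

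The best any such surgery can yield is a disjoint union of finite paths, and by Equation \ref{law_linearity} and the rational-convexity lemma the laws of these already sweep out all rational convex combinations of finitely many $\Psi(P_n)$; none of the nontrivial ones appear on the right-hand side of the corollary. A correct statement would have to replace $\Psi(\{P_n : n \in \NN^\ast\}) \cup \{\delta_{[P_\infty,\cdot]}\}$ by at least the closure of its convex hull (equivalently, by the closure of the set of laws of finite disjoint unions of paths). So the verdict is: your plan correctly isolates the missing step in the paper's implicit argument, but that step --- and with it the stated equality --- is unprovable; the honest resolution is the counterexample above rather than a completed proof.
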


Essentially, this means the space of rooted paths is not interesting. Indeed to better understand the concept of the iMTP, a space that offers more variety with regard to measures is necessary. Before we consider such a space, we will attempt to demonstrate why paths themselves are still intriguing.

\subsection{A Natural Representation of $\mc{P}$}
Although the title of this subsection includes the word ``natural,'' nothing explicitly categorical is happening. Its use is justified because the space of rooted paths may be viewed as a subspace of the plane $\ol{\NN}^2$. What we construct is an explicit metric space model of $\mc{P}$. To simplify our arguments, we will adopt the following convention:
\[
\frac{1}{1 + \infty} = 0.
\]

Let $\wt{\mc{P}} = \{(x,y) \in \ol{\NN}^2 ~:~ x \leq y\}$. Define the distance $\tilde{\rho} : \wt{\mc{P}} \times \wt{\mc{P}} \to \RR$ as follows:
\[
\tilde{\rho}\left((x,y),(x',y')\right) = \frac{1}{1 + r}
\]
where
\[
r =
\begin{cases}
\infty            & \text{ if $x =    x'$ and $y =    y'$},\\
\min\{x,x'\}      & \text{ if $x \neq x'$ and $y =    y'$},\\
\min\{y,y'\}      & \text{ if $x =    x'$ and $y \neq y'$},\\
\min\{x,x',y,y'\} & \text{ if $x \neq x'$ and $y \neq y'$}
\end{cases}
\]
for all $(x,y),(x',y') \in \wt{\mc{P}}$.

\begin{theo}
The metric space $(\mc{P},\rho)$ is isometric to $(\wt{\mc{P}},\tilde{\rho})$.
\end{theo}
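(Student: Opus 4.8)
The plan is to exhibit an explicit bijection between the space of paths $\mc{P}$ and the model $\wt{\mc{P}}$, then verify it is an isometry by comparing radii of agreement ball by ball. The natural encoding of a rooted path is by the pair of distances from the root to the two ends: a finite rooted path $[P(u,v),o]$ maps to $(d(o,u),d(o,v))$ with the coordinates ordered so the smaller comes first; a semi-infinite rooted path $[P(N,\infty),0]$ maps to $(|N|,\infty)$ (equivalently, $(\,\text{distance to the finite end},\infty)$); and the bi-infinite path $[P_\infty,\cdot]$ maps to $(\infty,\infty)$. By the Proposition identifying the limit points of the finite rooted paths, these three families exhaust $\mc{P}$, so this defines a map $\Phi : \mc{P} \to \wt{\mc{P}}$. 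First I would check $\Phi$ is well-defined (the isomorphism class of a rooted path is determined by this pair, since a path is reconstructed up to rooted isomorphism from the two root-to-end distances) and bijective (every pair $(x,y) \in \ol{\NN}^2$ with $x \le y$ arises: finite-finite when both are finite, finite-infinite when exactly one is infinite, infinite-infinite for $(\infty,\infty)$).

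The core of the proof is that $\Phi$ preserves distance, i.e. $\rho([G,o],[G',o']) = \tilde\rho(\Phi[G,o],\Phi[G',o'])$. Since both metrics have the form $1/(1+r)$, it suffices to show the radius of agreement matches: if $[G,o] \mapsto (x,y)$ and $[G',o'] \mapsto (x',y')$, then
\[
\sup\{s \in \NN : [B_G(o,s),o] = [B_{G'}(o',s),o']\} = r
\]
where $r$ is given by the four-case formula. The key observation is that $B_G(o,s)$ for a rooted path with root-to-end distances $(x,y)$ is itself a finite rooted path, and its two root-to-end distances are $\min\{x,s\}$ and $\min\{y,s\}$ — the ball simply truncates each arm at length $s$. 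Hence $[B_G(o,s),o] = [B_{G'}(o',s),o']$ if and only if $(\min\{x,s\},\min\{y,s\}) = (\min\{x',s\},\min\{y',s\})$ as ordered pairs. From here it is a short case analysis: when $x = x'$ and $y = y'$ agreement holds for all $s$, giving $r = \infty$; when $x \ne x'$ but $y = y'$, the first coordinates disagree precisely once $s$ exceeds $\min\{x,x'\}$, so the largest $s$ with agreement is $\min\{x,x'\}$; and symmetrically for the other two cases, yielding exactly the four-case definition of $r$. (One should note the harmless convention $1/(1+\infty)=0$ makes the equality $\rho = \tilde\rho$ literally correct in the case $[G,o]=[G',o']$.)

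**The main obstacle** I expect is the bookkeeping around the ordering convention $x \le y$: when two rooted paths map to $(x,y)$ and $(x',y')$, it is tempting but wrong to match first coordinate with first coordinate without remembering that a rooted-path isomorphism is allowed to swap the two ends. I would handle this by working with the \emph{unordered} pair $\{d(o,u),d(o,v)\}$ throughout the radius computation — $[B_G(o,s),o]$ depends only on the unordered pair $\{\min\{x,s\},\min\{y,s\}\}$ — and only impose $x \le y$ at the end to land in $\wt{\mc{P}}$ as defined. With that care, the case $x \ne x'$, $y = y'$ and its mirror image are genuinely distinct scenarios (disagreement in the "short arm" versus the "long arm"), and checking that $\min$ of the relevant pair of coordinates is the break-even radius in each is then routine. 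A secondary point worth a sentence is that $\Phi$ maps $\mc{P}$ \emph{onto} $\wt{\mc{P}}$ rather than some proper subset: every pair in $\ol\NN^2$ with $x \le y$ is realized, including the "doubly infinite" point, because the limit-points proposition guarantees the semi-infinite and bi-infinite paths all lie in $\mc{P}$. Once surjectivity, injectivity, and the radius identity are in hand, the isometry statement follows immediately.
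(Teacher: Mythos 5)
Your proposal is correct and follows essentially the same route as the paper: the same map sending a rooted path to the ordered pair of root-to-end distances, with surjectivity checked by explicit construction. The only difference is that you carry out the truncation argument ($B_G(o,s)$ has arm lengths $\min\{x,s\}$ and $\min\{y,s\}$) and the four-case comparison that the paper dismisses as ``straightforward to verify,'' and your attention to the ordered-versus-unordered pair issue is exactly the right point to be careful about.
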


\begin{proof}
Define a function $f : \mc{P} \to \wt{\mc{P}}$ by
\[
f[P(u,v),o] = \left(\min\{d(u,o),d(o,v)\},\max\{d(u,o),d(o,v)\}\right)
\]
for all $[P(u,v),o] \in \mc{P}$. It suffices to show that $f$ is an isometry, a well-defined surjective function that preserves distances. It is straightforward to verify that $f$ is well-defined and preserves distances. To see that $f$ is surjective, let $(x,y) \in \wt{\mc{P}}$ be arbitrary. Consider the path defined by the sequence of integers
\[
(-x,-x+1,\ldots,-1,0,1,\ldots,y-1,y).
\]
The isomorphism class of this path when rooted at zero is mapped to $(x,y)$, as required.
\end{proof}

By keeping this result in mind, we may use the notation of natural numbers instead of paths and vertices. For convenience, we will not distinguish between $\mc{P}$ and $\wt{\mc{P}}$ for the remainder of this section. The following example uses this simpler notation when deriving a formula for the measure of a vertical strip in the space $\mc{P}$.

A \emph{vertical strip} is a set of the form
\[
A_m = \{(x,y) \in \mc{P} ~:~ m \leq x\}
\]
where $m$ is a positive integer with $m \leq k$. Observe that
\begin{align*}
\Psi(P_{2k+1})(A_m) &= \sum_{i=m}^{k-1} \Psi(P_{2k+1})(i,2k - i) + \Psi(P_{2k+1})(k,k)\\
                    &= \sum_{i=m}^{k-1} \frac{2}{2k + 1} + \frac{1}{2k + 1}\\
                    &= \frac{2(k - m)}{2k + 1} + \frac{1}{2k + 1}\\
                    &= \frac{2k + 1 - 2m}{2k + 1}\\
                    &= 1 - \frac{2m}{2k + 1}
\end{align*}
and
\begin{align*}
\Psi(P_{2k})(A_m) &= \sum_{i=m}^{k-1} \Psi(P_{2k})(i,2k - 1 - i)\\
                  &= \sum_{i=m}^{k-1} \frac{1}{k}\\
                  &= \frac{k - m}{k}\\
                  &= 1 - \frac{m}{k}.
\end{align*}

In general,
\[
\Psi(P_n)(A_m) =
\begin{cases}
1 - \frac{2n}{m} & \text{ if $m \leq \floor{\frac{n}{2}}$,}\\
0                & \text{ otherwise}
\end{cases}
\]
for all positive integers $m$ and $n$.

\begin{lem}\label{finite_paths_are_discrete}
The subspace $(\NN^2,\rho)$ is topologically discrete. That is, every point is isolated.
\end{lem}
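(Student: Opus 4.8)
The plan is to exploit the explicit description of the metric $\tilde{\rho}$ on $\wt{\mc{P}}$ established by the preceding isometry, under which we no longer distinguish $\mc{P}$ from $\wt{\mc{P}}$; here $\NN^2$ means the set of finite points $\{(x,y)\in\NN^2 : x\leq y\}$, which corresponds to the finite rooted paths. Fix an arbitrary point $(x,y)\in\NN^2$, so $x\leq y$ and both coordinates are finite. I would show $(x,y)$ is isolated by bounding $\tilde{\rho}\bigl((x,y),(x',y')\bigr)$ from below over all $(x',y')\in\wt{\mc{P}}$ distinct from $(x,y)$.

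For such a point the integer $r$ in the definition of $\tilde{\rho}$ falls into one of the last three cases: if $x\neq x'$ and $y=y'$ then $r=\min\{x,x'\}\leq x\leq y$; if $x=x'$ and $y\neq y'$ then $r=\min\{y,y'\}\leq y$; and if $x\neq x'$ and $y\neq y'$ then $r=\min\{x,x',y,y'\}\leq x\leq y$. Hence $r\leq y$ in every case, so
\[
\tilde{\rho}((x,y),(x',y')) = \frac{1}{1+r} \geq \frac{1}{1+y}
\]
whenever $(x',y')\neq(x,y)$. Consequently the open ball of radius $\frac{1}{1+y}$ centred at $(x,y)$ contains no other point of $\wt{\mc{P}}$, hence none of $\NN^2$, so $(x,y)$ is isolated. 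Since $(x,y)$ was arbitrary, $(\NN^2,\rho)$ is topologically discrete.

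Alternatively one can bypass the case analysis: each finite rooted path is a finite rooted graph in $\wh{\G}_2$, hence an isolated point of $\wh{\G}_2$ by the earlier lemma characterising the isolated points of $\wh{\G}_M$; and an isolated point of a space stays isolated in every subspace containing it, so each point of $\NN^2\subseteq\mc{P}\subseteq\wh{\G}_2$ is isolated. I expect no real obstacle here; the only care needed in the self-contained argument is to carry the constraint $x\leq y$ through the three cases and to note the bounds remain valid when a coordinate is $0$.
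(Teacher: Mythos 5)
Your main argument is correct and essentially identical to the paper's proof: both isolate $(x,y)$ by checking the three cases of the definition of $\tilde{\rho}$ to show every other point of $\wt{\mc{P}}$ lies at distance at least $\min\{\frac{1}{x+1},\frac{1}{y+1}\} = \frac{1}{1+y}$. Your alternative one-line argument via the earlier lemma on isolated points of $\wh{\G}_M$ is also valid and arguably cleaner, but the paper does not take that route.
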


\begin{proof}
If $(x,y) \in \NN^2$, then
\[
\{(x,y)\} = B_\rho\left((x,y),\min\left\{\frac{1}{x + 1},\frac{1}{y + 1}\right\}\right).
\]
To see this, suppose that
\[
\rho((x,y),(x',y')) < \min\left\{\frac{1}{x + 1},\frac{1}{y + 1}\right\}.
\]
If $x = x'$ and $y \neq y'$, then
\[
\rho((x,y),(x',y')) = \frac{1}{\min\{y,y'\} + 1} \geq \frac{1}{y + 1},
\]
which is a contradiction. A similar argument establishes the other cases.
\end{proof}

\begin{theo}
As a topological space, $(\mc{P},\rho)$ is homeomorphic to $(\alpha\NN^2,d_\infty)$ where $d_\infty$ is the $\ell_\infty$ distance and
\[
\alpha\NN = \left\{\frac{1}{n} ~:~ n \in \NN^\ast\right\} \cup \{0\}.
\]
\end{theo}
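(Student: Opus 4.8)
The plan is to reduce the statement to a purely topological one and then prove it via a structural decomposition common to both spaces. By the previous theorem, $(\mc{P},\rho)$ is isometric to $(\wt{\mc{P}},\tilde{\rho})$, so it suffices to construct a homeomorphism $\wt{\mc{P}} \to \alpha\NN^2$. Both spaces are metric; both are compact ($\mc{P}$, hence $\wt{\mc{P}}$, is a closed subspace of the compact space $\wh{\G}_2$, and $\alpha\NN^2$ is a product of compact spaces); and both are countable, since $\wt{\mc{P}} \subseteq \ol{\NN}^2$. The strategy is to show that each space decomposes as a distinguished ``apex'' point together with a countable disjoint family of clopen subsets, every one homeomorphic to $\alpha\NN$, such that every neighbourhood of the apex contains all but finitely many members of the family; a homeomorphism is then produced by gluing together homeomorphisms of the members.

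For $\wt{\mc{P}}$, the apex is $\Omega = (\infty,\infty)$, and for $k \in \NN$ we set $C_k = \{(k,y) \in \wt{\mc{P}} : k \le y\}$, so that $C_k$ contains $(k,\infty)$. Since the isolated points of $\wh{\G}_2$ are precisely its finite rooted graphs, each $(k,y)$ with $y$ finite is isolated in $\wt{\mc{P}}$, while $\tilde{\rho}\big((k,\infty),(k,n)\big) \to 0$; hence $C_k \cong \alpha\NN$. A direct check with $\tilde{\rho}$ shows that the ball of radius less than $\tfrac{1}{1+k}$ about $(k,\infty)$ is contained in $C_k$, so $C_k$ is open; it is also closed (its complement, $\{\Omega\}$ together with the other $C_j$, is easily seen to be open), so the $C_k$ are clopen and partition $\wt{\mc{P}} \setminus \{\Omega\}$. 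Finally, $\tilde{\rho}\big(\Omega,(x,y)\big) = \tfrac{1}{1+x}$ whenever $(x,y) \ne \Omega$, so the open balls about $\Omega$ are exactly the sets $\{\Omega\} \cup \bigcup_{k \ge N} C_k$; in particular every neighbourhood of $\Omega$ contains all but finitely many $C_k$.

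For $\alpha\NN^2$, the apex is $(0,0)$. For $N \ge 1$ the set $K_N = \{(a,b) \in \alpha\NN^2 : \max\{a,b\} \ge \tfrac{1}{N}\}$ is the complement of an $\ell_\infty$-ball about $(0,0)$ and is clopen, and so is $F_N = K_N \setminus K_{N-1} = \{(a,b) : \max\{a,b\} = \tfrac{1}{N}\}$. Each $F_N$ is the union of the two copies $\{\tfrac{1}{N}\} \times \{c \in \alpha\NN : c \le \tfrac{1}{N}\}$ and $\{c \in \alpha\NN : c \le \tfrac{1}{N}\} \times \{\tfrac{1}{N}\}$ of $\alpha\NN$, meeting only at $(\tfrac{1}{N},\tfrac{1}{N})$; assigning that point to the first copy splits $F_N = A_N \sqcup B_N$ into two clopen pieces, each homeomorphic to $\alpha\NN$. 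Relabelling $\{A_N, B_N : N \ge 1\}$ as $\{E_i : i \ge 1\}$, the $E_i$ are clopen, pairwise disjoint, homeomorphic to $\alpha\NN$, and partition $\alpha\NN^2 \setminus \{(0,0)\}$; and since $\{(0,0)\} \cup \bigcup_{j > N} F_j$ is a base of neighbourhoods at $(0,0)$, every neighbourhood of $(0,0)$ contains all but finitely many $E_i$.

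To finish, fix a bijection $\sigma$ from the index set $\NN$ of the $C_k$ onto the index set of the $E_i$, and for each $k$ a homeomorphism $h_k : C_k \to E_{\sigma(k)}$ (which exists because both are homeomorphic to $\alpha\NN$). Let $\phi : \wt{\mc{P}} \to \alpha\NN^2$ agree with $h_k$ on each $C_k$ and send $\Omega$ to $(0,0)$; this $\phi$ is a bijection. It is continuous at every point of $\bigcup_k C_k$ because each $C_k$ is open and $\phi$ restricted to it is continuous, and it is continuous at $\Omega$ because the $\phi$-preimage of a basic neighbourhood $\{(0,0)\} \cup \bigcup_{i \in S} E_i$ (with $S$ cofinite) is $\{\Omega\} \cup \bigcup_{k \in \sigma^{-1}(S)} C_k$, again a neighbourhood of $\Omega$. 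A continuous bijection from the compact space $\wt{\mc{P}}$ onto the Hausdorff space $\alpha\NN^2$ is a homeomorphism, which proves the theorem. The only real work is the bookkeeping behind the two decompositions---verifying that each listed piece is genuinely clopen and genuinely homeomorphic to $\alpha\NN$ under the somewhat unusual metrics at hand; this is elementary but must be carried out carefully. Alternatively one could avoid the explicit construction altogether: both $\wt{\mc{P}}$ and $\alpha\NN^2$ are countable compact metric spaces whose second Cantor--Bendixson derivative is a single point ($\Omega$, respectively $(0,0)$) and whose third derivative is empty, so they are homeomorphic by the Mazurkiewicz--Sierpi\'nski classification.
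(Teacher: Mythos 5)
Your proof is correct, but it takes a genuinely different route from the paper's. The paper writes down the single explicit map $f(x,y) = \left(\frac{1}{x+1},\frac{1}{y+1}\right)$, checks continuity pointwise (for free at isolated points, then by hand at the three kinds of accumulation points $(x,\infty)$, $(\infty,y)$, $(\infty,\infty)$), and concludes with the same compact-to-Hausdorff lemma you use. You instead ignore coordinates entirely and match up the two spaces structurally: apex plus countably many clopen copies of $\alpha\NN$ accumulating only at the apex, glued by arbitrary homeomorphisms of the pieces; your Mazurkiewicz--Sierpi\'nski remark (both spaces are $\omega^2+1$) is the same idea pushed to its logical endpoint. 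The paper's approach is shorter and canonical, but yours is actually more careful on one point: the paper's $f$ is a bijection of the full square $\ol{\NN}^2$ onto $\alpha\NN^2$, whereas $\mc{P}$ is isometric only to the triangle $\wt{\mc{P}} = \{(x,y) : x \leq y\}$, so the restriction of $f$ to the actual model of $\mc{P}$ lands only in the half $\{(a,b) : a \geq b\}$ of $\alpha\NN^2$ and the paper never addresses why that half is homeomorphic to the whole. Your decomposition never needs a coordinatewise formula --- it only needs to count clopen copies of $\alpha\NN$ on each side, and a triangle's worth of columns is just as countable as a square's --- so it closes that gap. The only blemishes are cosmetic: $K_0$ should be declared empty so that $F_1 = K_1$, and the claim that each $C_k$ and each $A_N$, $B_N$ is homeomorphic to $\alpha\NN$ deserves the one-line justification that a countably infinite metric space with exactly one accumulation point, to which every injective sequence converges, is a convergent sequence together with its limit.
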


\begin{proof}
Define the function $f : \ol{\NN}^2 \to \alpha\NN^2$ by
\[
f(x,y) = \left(\frac{1}{x + 1},\frac{1}{y + 1}\right)
\]
for all $(x,y) \in \ol{\NN}^2$. We will show that $f$ is continuous. It is known that every function is continuous at the isolated points in its domain. By Lemma \ref{finite_paths_are_discrete}, $f$ is continuous on $\NN^2$.

Let $x \in \NN$ and let $\e > 0$. Choose
\[
\delta = \min\left\{\frac{1}{x + 1},\e\right\}.
\]
Suppose that $(x',y') \in \ol{\NN}^2 \setminus \{(x,\infty)\}$. The only case in which $\rho((x,\infty),(x',y')) < \delta$ is when $x = x'$ and $y' \in \NN$. Then
\begin{align*}
d_\infty(f(x,\infty),f(x',y')) &= \max\left\{\left|\frac{1}{x + 1} - \frac{1}{x' + 1}\right|,\left|0 - \frac{1}{y' + 1}\right|\right\}\\
                               &= \frac{1}{y' + 1}\\
                               &= \rho((x,\infty),(x',y'))\\
                               &< \delta\\
                               &\leq \e.
\end{align*}
Similarly, the same is true for $(\infty,y)$ for all $y \in \NN$. To see that $f$ is continuous at $(\infty,\infty)$, note that
\begin{align*}
d_\infty(f(\infty,\infty),f(x',y')) &= \max\left\{\frac{1}{x' + 1},\frac{1}{y' + 1}\right\}\\
                                    &= \frac{1}{\min\{x',y'\} + 1}\\
                                    &= \rho((\infty,\infty),(x',y'))
\end{align*}
if $(x',y') \neq (\infty,\infty)$. In the case of equality, the result follows as well.

Hence $f$ is continuous on $\ol{\NN}^2$. Furthermore, it is a bijection. Upon applying a result from topology, which states that a continuous bijection from a compact space to a Hausdorff space is a homeomorphism, the proof is complete.
\end{proof}

Before leaving this section, the reader may be interested to know what else is in the space $\wh{\G}_2$. Although it is somewhat surprising, it is not very difficult to prove that
\[
\wh{\G}_2 = \mc{P} \cup \{[C_n,\cdot] ~:~ n \geq 3\}.
\]
In fact, $([C_n,\cdot])_{n=3}^\infty$ converges to $[P_\infty,\cdot]$.

\section{The Trees Inside $\wh{\G}_3$}
As we have seen in the previous section, the laws of paths are not interesting enough. To remedy this situation, we will move to--in some sense--a higher dimension.

Throughout this section, the object of study will be the 3-regular infinite tree $T_\infty$. Since $T_\infty$ is vertex-transitive, we will fix a root $t$ and use it for the remainder of this section. For each $n$, let
\[
T_n = B_{T_\infty}(t,n).
\]
These trees will mimic the finite paths seen in the previous section.

Recall Theorem \ref{weak_conv_implies_graph_conv}, which related the weak convergence of laws to the convergence of rooted graphs. As mentioned then, the converse of this theorem is not true. Indeed this section yields a simple counterexample.

The sequence $([T_n,t])_{n=0}^\infty$ converges to $[T_\infty,t]$, but $\Psi(T_n) \not \Rightarrow \delta_{[T_\infty,t]}$. To see this, let $\deg$ be the degree function defined earlier. Suppose that $\Psi(T_n) \Rightarrow \delta_{[T_\infty,t]}$. In particular,
\[
\lim_{n \to \infty} \int \deg~d\Psi(T_n) = \int \deg~d\delta_{[T_\infty,t]}.
\]
Note that
\[
\int \deg~d\Psi(T_n) = \frac{1}{|V(T_n)|} \sum_{x \in V(T_n)} \deg_{T_n}(x) = \frac{2|E(T_n)|}{|V(T_n)|} = 2 - \frac{2}{|V(T_n)|}
\]
for all nonnegative integers $n$, and so
\[
\lim_{n \to \infty} \int \deg~d\Psi(T_n) = 2.
\]
On the other hand,
\[
\int \deg~d\delta_{[T_\infty,t]} = \deg[T_\infty,t] = 3;
\]
a contradiction.

Recall that there is a unique path between two vertices in a tree. With this in mind, let $u_1 \cdots u_k t$ be such a path in $T_k$ from a leaf $u_1$ to the vertex $t$.

\begin{theo}\label{trees_converge_to_s}
The sequence $(\Psi(T_k))_{k=0}^\infty$ converges weakly to a measure $\mu$ on $\wh{\G}_M$ defined by
\[
\mu[S,u_i] = \frac{1}{2^i}
\]
where $[S,u_i]$ is the limit of the sequence $([T_k,u_i])_{k=i}^\infty$ for all positive integers $i$.
\end{theo}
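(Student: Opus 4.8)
The plan is to first make sense of the limit rooted graphs $[S,u_i]$ and of the measure $\mu$, then to evaluate $\int f\,d\Psi(T_k)$ directly for an arbitrary bounded continuous $f$, splitting the resulting sum over the vertices of $T_k$ into a finite ``head'' that converges term by term and a ``tail'' whose total mass is uniformly small. For the setup: for a fixed $i$, the vertex $u_i$ lies at distance $k-i+1$ from $t$ in $T_k$, and the subtree of $T_k$ hanging below $u_i$ is a complete binary tree of depth $i-1$; hence as soon as $k\ge i+r$ the ball $B_{T_k}(u_i,r)$ never reaches $t$ and is literally the same rooted graph for all such $k$. It follows that $([T_k,u_i])_{k\ge i}$ is Cauchy in $\wh{\G}_M$ (with $M=3$, as all graphs in sight have maximum degree at most $3$), so by completeness of $\wh{\G}_M$ it converges to a rooted graph $[S,u_i]$, with $B_S(u_i,r)=B_{T_k}(u_i,r)$ whenever $k\ge i+r$. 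The $[S,u_i]$ are pairwise distinct, for instance because the distance from the root of $[S,u_i]$ to its nearest leaf equals $i-1$; and since $\sum_{i=1}^{\infty}2^{-i}=1$, the assignment $\mu[S,u_i]=2^{-i}$ really does define a probability measure on $\wh{\G}_M$, supported on a countable discrete set.

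Next I would compute the integral. The $\Aut(T_k)$-orbits on $V(T_k)$ are precisely the sets of vertices at a common distance from $t$ --- $t$ is the unique centre of $T_k$, and an automorphism is free to permute the isomorphic subtrees below each vertex --- so there is $1$ vertex at distance $0$ and $3\cdot 2^{\ell-1}$ vertices at distance $\ell$ for $1\le\ell\le k$, and a vertex at distance $\ell$ represents the rooted graph $[T_k,u_i]$ with $i=k-\ell+1$. Using $|V(T_k)|=3\cdot 2^k-2$ and the earlier formula $\int f\,d\Psi(T_k)=\frac{1}{|V(T_k)|}\sum_{x\in V(T_k)}f[T_k,x]$, this yields
\[
\int f\,d\Psi(T_k)=\frac{f[T_k,t]}{3\cdot 2^k-2}+\sum_{i=1}^{k}\frac{3\cdot 2^{k-i}}{3\cdot 2^k-2}\,f[T_k,u_i],
\]
which I would compare with $\mu[f]=\sum_{i=1}^{\infty}2^{-i}f[S,u_i]$. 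The relevant facts are that $\frac{3\cdot 2^{k-i}}{3\cdot 2^k-2}\to 2^{-i}$ for each fixed $i$, that $\sum_{i=1}^{k}\frac{3\cdot 2^{k-i}}{3\cdot 2^k-2}=1-\frac{1}{3\cdot 2^k-2}$, and hence that the tail mass $\sum_{J<i\le k}\frac{3\cdot 2^{k-i}}{3\cdot 2^k-2}$ tends to $2^{-J}$ as $k\to\infty$.

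To pass to the limit, I would fix $f$ with $|f|\le B$ and $\e>0$, choose $J$ with $2^{-J}<\e$, and estimate as follows. The tail of $\mu[f]$ is bounded by $B\e$, and for $k$ large the tail $\sum_{J<i\le k}\frac{3\cdot 2^{k-i}}{3\cdot 2^k-2}f[T_k,u_i]$ is bounded by $2B\e$ by the mass estimate above. For the finitely many $i\le J$ in the head, continuity of $f$ together with $[T_k,u_i]\to[S,u_i]$ gives $f[T_k,u_i]\to f[S,u_i]$, and $\frac{3\cdot 2^{k-i}}{3\cdot 2^k-2}\to 2^{-i}$, so the head sum converges to $\sum_{i\le J}2^{-i}f[S,u_i]$; meanwhile $f[T_k,t]/(3\cdot 2^k-2)\to 0$. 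Putting these together gives $\limsup_{k\to\infty}\bigl|\int f\,d\Psi(T_k)-\mu[f]\bigr|\le CB\e$ for an absolute constant $C$, and since $\e$ was arbitrary this proves $\Psi(T_k)\Rightarrow\mu$.

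The hard part is not the arithmetic but the conceptual point behind the head/tail split: one has to recognise that for a \emph{fixed} $i$ the vertex $u_i$ sits near the leaves of $T_k$, at distance $k-i+1$ from $t$, so the bulk of the mass of $\Psi(T_k)$ --- which is carried by the high levels --- does not escape but instead settles onto the one-ended limit graphs $[S,u_i]$ and spreads out into the geometric distribution $2^{-i}$. What makes the argument go through is the uniform-in-$k$ control of the tail mass $\sum_{i>J}\frac{3\cdot 2^{k-i}}{3\cdot 2^k-2}$, which is exactly why establishing the identity $\sum_{i=1}^{k}\frac{3\cdot 2^{k-i}}{3\cdot 2^k-2}=1-\frac{1}{3\cdot 2^k-2}$ is the linchpin of the estimate.
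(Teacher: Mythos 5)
Your proposal is correct and follows essentially the same route as the paper: the same orbit count giving $\Psi(T_k)[T_k,u_i] = 3\cdot 2^{k-i}/(3\cdot 2^k - 2)$ and $|V(T_k)| = 3\cdot 2^k - 2$, followed by the same head/tail split in which continuity of $f$ handles the finitely many terms $i \le J$ and boundedness together with the geometric decay of the weights handles the tail. The only differences are cosmetic --- you keep the exact weights and use the total-mass identity where the paper first factors out $3\cdot 2^k/(3\cdot 2^k-2) \to 1$ --- plus some extra, welcome justification of the convergence $[T_k,u_i] \to [S,u_i]$ and of the fact that the orbits of $\Aut(T_k)$ are the distance levels, which the paper takes for granted.
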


\begin{proof}
Let $f \in \mb{C}(\wh{\G}_M)$ be arbitrary. Since $f$ is bounded, there exists a real number $L$ such that
\[
\sup\left\{|f[G,o]| \in \RR ~:~ [G,o] \in \wh{\G}_M\right\} \leq L.
\]
To simplify the proof, observe that
\begin{align*}
\lim_{k \to \infty} \int f ~d\Psi(T_k) &= \lim_{k \to \infty} \left(\sum_{i=1}^k \frac{f[T_k,u_i] \cdot |[T_k,u_i]|}{|V(T_k)|} + \frac{f[T_k,t]}{|V(T_k)|}\right)\\
                                       &= \lim_{k \to \infty} \left(\sum_{i=1}^k \frac{f[T_k,u_i] \cdot 3 \cdot 2^{k-i}}{3 \cdot 2^k - 2} + \frac{f[T_k,t]}{3 \cdot 2^k - 2}\right)\\
                                       &= \lim_{k \to \infty} \left(\frac{3 \cdot 2^k}{3 \cdot 2^k - 2}\right) \lim_{k \to \infty} \left(\sum_{i=1}^k \frac{f[T_k,u_i]}{2^i}\right) + \lim_{k \to \infty} \left(\frac{f[T_k,t]}{3 \cdot 2^k - 2}\right)\\
                                       &= \lim_{k \to \infty} \left(\sum_{i=1}^k \frac{f[T_k,u_i]}{2^i}\right).
\end{align*}

Now it suffices to prove that the expression in the previous line is the integral of $f$ with respect to $\mu$. To demonstrate this, let $\e > 0$. We know that
\[
\frac{L}{2^N} < \frac{\e}{4}
\]
for some positive integer $N$. By definition of $[S,u_i]$, the sequence $(f[T_k,u_i])_{k=i}^\infty$ converges to $f[S,u_i]$ for each $i \in \{1,\ldots,N\}$ because $f$ is continuous. That is, there exists a positive integer $n_i$ such that
\[
|f[T_k,u_i] - f[S,u_i]| < \frac{\e}{2}
\]
for all $k \geq \max\{n_i,N + 1\}$.

Choose $n = \max\{n_1,n_2,\ldots,n_N,N + 1\}$ and let $k$ be an integer with $k \geq n$. Then
\begin{align*}
\Bigg|\sum_{i=1}^k \frac{f[T_k,u_i]}{2^i} &- \sum_{i=1}^\infty \frac{f[S,u_i]}{2^i}\Bigg|\\
                                          &\leq \sum_{i=1}^N \frac{|f[T_k,u_i] - f[S,u_i]|}{2^i} + \sum_{i=N+1}^k \frac{|f[T_k,u_i]|}{2^i} + \sum_{i=N+1}^\infty \frac{|f[S,u_i]|}{2^i}\\
                                          &< \sum_{i=1}^N \frac{\e}{2^{i+1}} + \sum_{i=N+1}^k \frac{L}{2^i} + \sum_{i=N+1}^\infty \frac{L}{2^i}\\
                                          &\leq \left(\frac{\e}{2}\right) \sum_{i=1}^\infty \frac{1}{2^i} + \frac{2L}{2^N}\\
                                          &< \frac{\e}{2} + \frac{\e}{2}\\
                                          &= \e,
\end{align*}
and so
\[
\lim_{k \to \infty} \int f ~d\Psi(T_k) = \lim_{k \to \infty} \left(\sum_{i=1}^k \frac{f[T_k,u_i]}{2^i}\right) = \sum_{i=1}^\infty \frac{f[S,u_i]}{2^i} = \int f ~d\mu.
\]

Hence $(\Psi(T_k))_{k=0}^\infty$ converges weakly to $\mu$.
\end{proof}

\begin{figure}
\begin{center}
\begin{tikzpicture}[scale=0.75]
\GraphInit[vstyle=Classic]
\Vertex[Lpos=-90,x=0,y=0,style={line width=1pt,fill=white,minimum size=5pt}]{$u_1$}
\Vertex[Lpos=-90,x=4,y=0,style={line width=1pt,fill=white,minimum size=5pt}]{$u_2$}
\Vertex[Lpos=-90,x=8,y=0,style={line width=1pt,fill=white,minimum size=5pt}]{$u_3$}
\Vertex[Lpos=-90,x=12,y=0,style={line width=1pt,fill=white,minimum size=5pt}]{$u_4$}
\Vertex[NoLabel,x=16,y=0,style={line width=1pt,fill=white,minimum size=0pt}]{A}
\Edges($u_1$,$u_2$,$u_3$,$u_4$)
\Edges[style={dotted}]($u_4$,A)
\Vertex[NoLabel,x=4,y=2.5,style={line width=1pt,fill=white,minimum size=5pt}]{B}
\Vertex[NoLabel,x=8,y=2.5,style={line width=1pt,fill=white,minimum size=5pt}]{C}
\Vertex[NoLabel,x=6.9,y=4,style={line width=1pt,fill=white,minimum size=5pt}]{C1}
\Vertex[NoLabel,x=9.1,y=4,style={line width=1pt,fill=white,minimum size=5pt}]{C2}
\Vertex[NoLabel,x=12,y=2.5,style={line width=1pt,fill=white,minimum size=5pt}]{D}
\Vertex[NoLabel,x=10.9,y=4,style={line width=1pt,fill=white,minimum size=5pt}]{D1}
\Vertex[NoLabel,x=13.1,y=4,style={line width=1pt,fill=white,minimum size=5pt}]{D2}
\Vertex[NoLabel,x=10.2,y=5,style={line width=1pt,fill=white,minimum size=5pt}]{D11}
\Vertex[NoLabel,x=11.6,y=5,style={line width=1pt,fill=white,minimum size=5pt}]{D12}
\Vertex[NoLabel,x=12.4,y=5,style={line width=1pt,fill=white,minimum size=5pt}]{D21}
\Vertex[NoLabel,x=13.8,y=5,style={line width=1pt,fill=white,minimum size=5pt}]{D22}
\Edges($u_2$,B)
\Edges($u_3$,C)
\Edges($u_4$,D)
\Edges(C,C1)
\Edges(C,C2)
\Edges(D,D1)
\Edges(D,D2)
\Edges(D1,D11)
\Edges(D1,D12)
\Edges(D2,D21)
\Edges(D2,D22)
\end{tikzpicture}
\end{center}
\caption[The graph $S$]{The graph $S$.}
\label{fig:the_graph_S}
\end{figure}
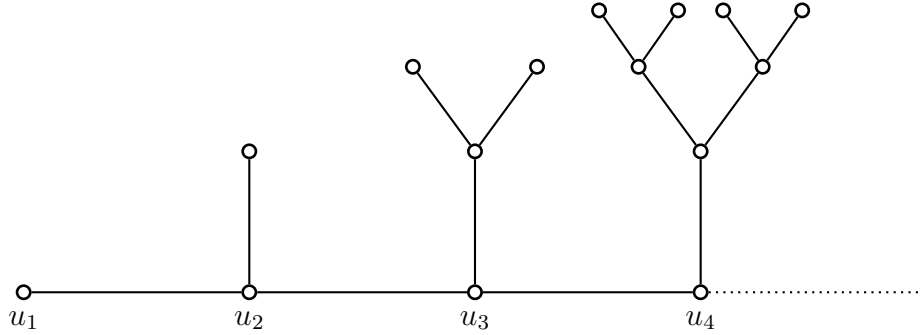

The graph $S$ defined above is illustrated in Figure \ref{fig:the_graph_S}.
\cleardoublepage

\section{Hyperfinite Collections}
Although we will not go into detail, Schramm \cite{hgl} discusses the concept of a hyperfinite collection whose definition is presented below. In this section, we merely provide an example of such a collection.

\begin{defn}
A collection of finite graphs $\G$ is $(k,\e)$-\emph{hyperfinite} for some positive integer $k$ and $\e > 0$ if for every $G \in \G$, there exists a set $S \subseteq E(G)$ such that $|S| \leq \e|V(G)|$, and each connected component of $G \setminus S$ has at most $k$ vertices.

The collection $\G$ is \emph{hyperfinite} if for every $\e > 0$, there is a positive integer $k$ such that $\G$ is $(k,\e)$-hyperfinite.
\end{defn}

The collection $\mc{S}$ of finite paths is hyperfinite. Given $\e > 0$, choose
\[
k = \ceil{1/\e}.
\]
Let $P_{n+1} = e_1 \cdots e_n$ be a finite path for some positive integer $n$ written as a sequence of its edges. Consider the set $S = \{e_i \in E(P_{n+1}) ~:~ k | i\}$. Then
\[
|S| = \floor{\frac{n}{k}},
\]
and every connected component of $P_{n+1} \setminus S$ has at most $k$ vertices.

\begin{prop}
The union of a finite number of hyperfinite collections is hyperfinite.
\end{prop}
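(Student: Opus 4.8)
The plan is to reduce the general case to the case of two collections, and then handle two collections directly by combining the hyperfiniteness witnesses. Suppose $\G_1, \G_2, \ldots, \G_r$ are hyperfinite collections of finite graphs, and let $\G = \G_1 \cup \cdots \cup \G_r$. Fix $\e > 0$. First I would apply the definition of hyperfiniteness to each $\G_j$ separately to obtain positive integers $k_1, \ldots, k_r$ such that $\G_j$ is $(k_j, \e)$-hyperfinite for each $j$. The natural candidate for a single parameter that works for all of $\G$ is $k = \max\{k_1, \ldots, k_r\}$.

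Next I would verify that $\G$ is $(k, \e)$-hyperfinite with this choice of $k$. Let $G \in \G$ be arbitrary. Then $G \in \G_j$ for some $j \in \{1, \ldots, r\}$ (if $G$ lies in several of the collections, just pick one). Since $\G_j$ is $(k_j, \e)$-hyperfinite, there is a set $S \subseteq E(G)$ with $|S| \leq \e |V(G)|$ such that every connected component of $G \setminus S$ has at most $k_j$ vertices. Because $k_j \leq k$, every connected component of $G \setminus S$ has at most $k$ vertices as well, and the bound $|S| \leq \e|V(G)|$ is unchanged. Hence the same $S$ witnesses that $G$ satisfies the $(k, \e)$-condition. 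As $G$ was arbitrary, $\G$ is $(k, \e)$-hyperfinite.

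Finally, since $\e > 0$ was arbitrary and for each $\e$ we have produced a positive integer $k$ (depending on $\e$) such that $\G$ is $(k, \e)$-hyperfinite, the collection $\G$ satisfies the definition of hyperfinite. This completes the argument.

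There is essentially no hard step here: the only mild subtlety is the observation that enlarging the component-size bound $k_j$ to $k$ preserves the property ``every component has at most $k_j$ vertices,'' and that the edge-set bound involves $\e$ uniformly and so needs no adjustment. One could phrase the whole thing for $r = 2$ and then invoke an easy induction on the number of collections, but since the $\max$ argument works for any finite $r$ in one stroke, I would present it directly without the inductive detour.
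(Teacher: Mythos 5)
Your argument is correct: fixing $\e>0$, taking $k_j$ for each collection and setting $k=\max\{k_1,\ldots,k_r\}$ works exactly because enlarging the component-size bound preserves the $(k,\e)$-condition while the edge bound $|S|\leq\e|V(G)|$ needs no change. The paper states this proposition without proof, so there is nothing to compare against; your proof is the natural one and fills that omission.
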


\section{Possible Directions of Research}
There is much left to uncover in the realm of unimodular measures.

Although it was not shown in this report, it is true that the Dirac measure $\delta_{[P_\infty,\cdot]}$ satisfies the iMTP. Indeed the infinite path may be ``rotated'' by 180 degrees, which means $[P_\infty,x,y]$ is symmetric in $x$ and $y$. A more general question that we wish to answer is whether a Dirac measure on \emph{any} vertex-transitive graph is unimodular. Note that Dirac measures of finite vertex-transitive graphs are laws, which we know now are unimodular, so the question is directed at infinite graphs.

We may also consider the open problem mentioned earlier that claims every unimodular measure is a weak limit of a sequence of laws. Elek \cite{otlolggs} has provided a partial solution to this problem, and we wish to understand his reasoning. In fact we have yet to prove the converse of this conjecture, which, according to Aldous and Lyons \cite{pourn}, and Schramm \cite{hgl}, is trivial.

Perhaps the convexity of the closure of laws offers interesting results as well. Hence its further study is also warranted.
\cleardoublepage

\addcontentsline{toc}{section}{References}

\cleardoublepage

\begin{thebibliography}{9}
\bibitem{pourn} D. Aldous and R. Lyons. Processes on unimodular random networks. \emph{Electronic Journal of Probability}, (12):Paper 54, pages 1454-1508, 2007.

\bibitem{rodlofpg} I. Benjamini and O. Schramm. Recurrence of distributional limits of finite planar graphs. \emph{Electronic Journal of Probability}, (6):Paper 23, pages 1-13, 2001.

\bibitem{gt} J. A. Bondy and U. S. R. Murty, Graph Theory, (Springer, 2008).

\bibitem{mt} J.L. Doob, Measure Theory, (Springer, 1994).

\bibitem{otlolggs} G. Elek. On the limit of large girth graph sequences,\\arXiv:0811.1149v1 [math.CO], 2008.

\bibitem{hgl} O. Schramm. Hyperfinite graph limits. \emph{Electronic Research Announcements in Mathematical Sciences}, (15):Pages 17-23, 2007.
\end{thebibliography}
\end{document}